%

\documentclass{article}

\RequirePackage{amsthm,amsmath,amsfonts,amssymb}
\RequirePackage[numbers]{natbib}


\usepackage[utf8]{inputenc}
\usepackage{algorithm}
\usepackage{algorithmic}
\usepackage{blkarray}
\usepackage{comment}
\usepackage{mathtools}
\bibliographystyle{plainnat}
\usepackage{stmaryrd}

\usepackage{MnSymbol}

\theoremstyle{plain}
\newtheorem{thm}{Theorem}

\newtheorem{cor}[thm]{Corollary}
\newtheorem{lem}[thm]{Lemma}
\newtheorem{prop}[thm]{Proposition}
\theoremstyle{remark}
\newtheorem{defn}[thm]{Definition}
\newtheorem{exmp}[thm]{Example}
\newtheorem{rem}[thm]{Remark}

\usepackage{tikz}
\usepackage{tikz-cd} 
\usepackage{tkz-graph} 
\usetikzlibrary{matrix,arrows, arrows.meta,automata, calc, 
	decorations.pathmorphing,shapes, backgrounds, positioning,
	shapes.geometric}

\newcommand{\D}{\mathcal{D}}

\newcommand{\G}{\mathcal{G}}
\newcommand{\I}{\mathcal{I}}

\newcommand{\starrightarrow}{\ *\!\!\rightarrow}
\newcommand{\starleftarrow}{\leftarrow\!\! * \ }

\newcommand{\msep}[3]{#1 \perp_m #2 \mid #3}
\newcommand{\notmsep}[3]{#1 \not\perp_m #2 \mid #3}
\newcommand{\msepG}[4]{#1 \perp_m #2 \mid #3\ [#4]}
\newcommand{\musep}[3]{#1 \perp_\mu #2 \mid #3}
\newcommand{\musepG}[4]{#1 \perp_\mu #2 \mid #3\ [#4]}

\newcommand{\an}{\text{an}}
\newcommand{\de}{\text{de}}
\newcommand{\md}{\ \mathrm{d}}
\newcommand{\pa}{\text{pa}}

\newcommand{\ito}{It\^o\ }

\newcommand{\disjU}{\mathbin{\dot{\cup}}}

\newcommand{\E}[2]{\mathrm{E}\!\left[#1 \mid #2\right]}

\DeclareRobustCommand{\unEdge}{%
	\mathrel{%
		\text{%
			\ooalign{$\rightfootline$\cr\reflectbox{$\rightfootline$}\cr}%
		}%
	}%
}


\tikzset{%
	myCircle/.style={%
		draw = black,shape = circle, minimum size = {height("$\beta$") + 14pt}
	}
}
\tikzset{%
	myRect/.style={%
		draw = black,shape = rectangle, minimum size = {height("$\beta$") + 
			12pt}
	}
}
\tikzset{
	every circle node/.style=myCircle,
	every edge/.append style = {shorten >= 2pt, shorten <= 2pt}
}

\title{Graphical modeling of stochastic processes driven by correlated 
	errors}
\author{Søren Wengel Mogensen$^{1,2,}$\footnote{swemo@dtu.dk} \and Niels 
Richard Hansen$^1,$\footnote{niels.r.hansen@math.ku.dk}}
\date{\small \it $^1$ Department of Mathematical Sciences, University of 
	Copenhagen \\
	$^2$ Section for Cognitive Systems, 
	Technical University of 
	Denmark}

\begin{document}

\maketitle

	\begin{abstract}
			We study a class of graphs that represent local independence 
			structures 
			in stochastic processes allowing for correlated 
			error processes.  Several graphs may encode the same local 
			independencies 
			and we characterize such equivalence classes of graphs. In the 
			worst case, the number 
			of 
			conditions in our characterizations grows superpolynomially as a 
			function 
			of 
			the size of the node set in the graph. We show that deciding Markov 
			equivalence is 
			coNP-complete which suggests that our characterizations cannot be 
			improved 
			upon substantially. We prove 
			a global 
			Markov property in the case of a 
			multivariate Ornstein-Uhlenbeck process which is driven by 
			correlated 
			Brownian motions.
	\end{abstract}


	
\section{Introduction}

Graphical modeling studies how to relate graphs to properties of probability 
distributions \citep{lauritzen1996}. There is a rich literature on graphical 
modeling of distributions of multivariate random variables 
\citep{maathuis2018}, in particular on graphs as representations of conditional 
independencies. In stochastic processes, local independence can be used as a 
concept analogous to conditional 
independence and several papers use graphs to encode local independencies 
\citep{didelez2006, didelez2008, aalen2012, roysland2012, mogensenUAI2018, 
Mogensen2020a}.
\citet{didelez2000, didelez2008} studies graphical modeling of local 
independence 
of multivariate point processes. 
\citet{mogensenUAI2018} also consider diffusions. This previous work only 
models 
direct influence between coordinate processes in a multivariate stochastic 
process. We consider the more general case in which the error processes driving 
the continuous-time stochastic process may be correlated. \citet{eichler2007, 
eichlerGranger2007, 
	eichlerChapter2012, eichler2013} study this in the time series case (i.e., 
	stochastic 
	processes indexed by discrete time). 
	
	A specific local 
independence 
structure can be represented by several different graphs, and the 
characterization of such Markov equivalence classes is an important 
question in graphical modeling. We study these equivalence classes and 
characterize them. Our characterizations are computationally demanding as they 
may 
involve exponentially many conditions (as a function of the number of nodes in 
the graphs). We prove that deciding Markov 
equivalence in this class of graphs is coNP-hard, and therefore one would not 
except to find a characterization which is verified in polynomial time.

Markov properties are central in graphical modeling as they allow us to deduce 
independence from graphs. The graphical results in this paper apply to various 
classes of stochastic 
processes for which it is possible to show a so-called global Markov property. 
As an example, we study systems of linear stochastic 
differential equations (SDEs), 
and in particular 
Ornstein-Uhlenbeck processes. Such models have been used in 
numerous fields such as psychology \citep{heath2000}, neuroscience 
\citep{ricciardi1979, shimokawa2000, ditlevsen2005}, finance \citep{stein1991, 
	schobel1999, 
	bormetti2010}, 
	biology \citep{bartoszek2017}, and survival analysis \citep{aalen2004, 
	lee2006}. In this paper, we show that Ornstein-Uhlenbeck processes with 
	correlated driving 
Brownian motions satisfy a global Markov property with respect to a certain 
graph. Previous work in continous-time models considers independent error 
processes only and the present work extends this framework to cases where the 
driving processes are correlated. To our knowledge, our result is the first 
such result in continuous-time models. It is analogous to results in time 
series models with correlated error processes \cite{eichler2007, 
eichlerGranger2007, 
	eichlerChapter2012, eichler2013}. The graphical and algorithmic results 
we present also 
apply to these time series models.

The paper is organized as follows. Section \ref{sec:LI} introduces local 
independence for \ito processes.  Section
\ref{sec:graphLI} defines  {\it directed correlation graphs} (cDGs) -- the
class of graphs that we will use throughout the paper to represent  local
independencies in a stochastic process. In Section  \ref{sec:graphLI} we state
a global Markov property for Ornstein-Uhlenbeck processes. Section  \ref{sec:ME} gives
a   characterization of the cDGs that encode the same independencies. This
directly leads to  an algorithm for checking equivalence of cDGs. This algorithm
runs in  exponential time (in the number of nodes in the graphs). In Section 
\ref{sec:decME} we state another characterization of Markov equivalence and we 
prove that deciding Markov equivalence is
coNP-complete.


\section{Local independence}
\label{sec:LI}


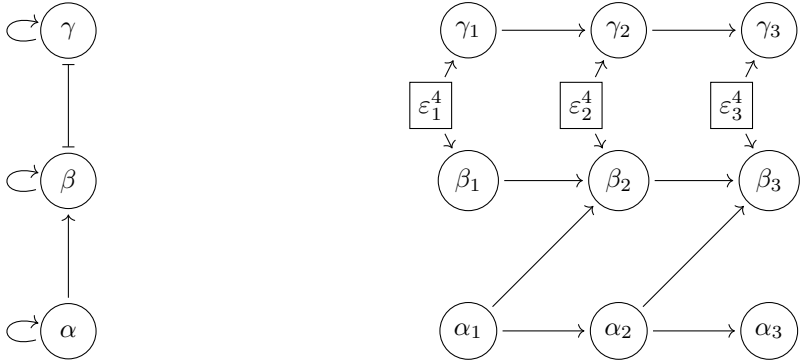
\begin{figure}
	\begin{minipage}{0.45\linewidth}
		\begin{tikzpicture}
		\node[shape=circle,draw=black] (a) at (0,0) {$\alpha$};
		\node[shape=circle,draw=black] (b) at (0,2) {$\beta$};
		\node[shape=circle,draw=black] (c) at (0,4) {$\gamma$};
		
		\path [->] (a) edge [bend left = 0] node {} (b);
		\path [|-|](b) edge [bend right = 0] node {} (c);
		\draw[every loop/.append style={->}] (c) edge[loop left] 
		node {}  (c);
		\draw[every loop/.append style={->}] (b) edge[loop left] 
		node {}  (b);
		\draw[every loop/.append style={->}] (a) edge[loop left] 
		node {}  (a);	
		\end{tikzpicture}
	\end{minipage}
	\begin{minipage}{0.45\linewidth}
		\begin{tikzpicture}
		\node[shape=circle,draw=black] (a1) at (0,0) {$\alpha_{1}$};
		\node[shape=circle,draw=black] (b1) at (0,2) {$\beta_{1}$};
		\node[shape=circle,draw=black] (c1) at (0,4) {$\gamma_{1}$};
		\node[shape=circle,draw=black] (a2) at (2,0) {$\alpha_{2}$};
		\node[shape=circle,draw=black] (b2) at (2,2) {$\beta_{2}$};
		\node[shape=circle,draw=black] (c2) at (2,4) {$\gamma_{2}$};
		\node[shape=circle,draw=black] (a3) at (4,0) {$\alpha_{3}$};
		\node[shape=circle,draw=black] (b3) at (4,2) {$\beta_{3}$};
		\node[shape=circle,draw=black] (c3) at (4,4) 
		{$\gamma_{3}$};
		\node[shape=rectangle,draw=black] (d1) at (-0.5,3) {$\varepsilon_{1}^4$};
		\node[shape=rectangle,draw=black] (d2) at (1.5,3) {$\varepsilon_{2}^4$};
		\node[shape=rectangle,draw=black] (d3) at (3.5,3) 
		{$\varepsilon_{3}^4$};		
		
		\path [->] (a1) edge [bend left = 0] node {} (b2);
		\path [->] (a1) edge [bend left = 0] node {} (a2);
		\path [->] (b1) edge [bend left = 0] node {} (b2);
		\path [->] (c1) edge [bend left = 0] node {} (c2);
		
		\path [->] (a2) edge [bend left = 0] node {} (b3);
		\path [->] (a2) edge [bend left = 0] node {} (a3);
		\path [->] (b2) edge [bend left = 0] node {} (b3);
		\path [->] (c2) edge [bend left = 0] node {} (c3);
		
		
		\path [->] (d1) edge [bend left = 0] node {} (b1);
		\path [->] (d1) edge [bend left = 0] node {} (c1);
		\path [->] (d2) edge [bend left = 0] node {} (b2);
		\path [->] (d2) edge [bend left = 0] node {} (c2);
		\path [->] (d3) edge [bend left = 0] node {} (b3);
		\path [->] (d3) edge [bend left = 0] node {} (c3);
		\end{tikzpicture}
		
	\end{minipage}
	\caption{A local independence graph (left) and an `unrolled' graph (right) 
	where time  is
	made explicit. The two graphs  represent the same local independence
	structure. A node  $\delta$ for $\delta \in \{\alpha,\beta,\gamma\}$
	represents the increments of the $X_t^\delta$-process at  time $t$. On the
	right, the  $\varepsilon^4$-process is a `white noise' process that creates
	dependence  between $X^\beta_t$ and $X^\gamma_t$. In the `rolled' version of the
	graph  (left) this is represented by a {\it blunt} edge, 
	$\beta\unEdge\gamma$. When unrolling a local independence graph to 
	obtain a graphical representation in terms of lagged variables, we could 
	also 
	choose to include $\alpha_s \rightarrow \beta_t$ in the 
	unrolled graph for all $s<t$ if 
	$\alpha\rightarrow\beta$ in the local independence graph (see also 
	\cite{sokol2014, danks2013, Hyttinen2016} and
	\cite[supplementary material]{Mogensen2020a}).}
	\label{fig:unrolledcDG}
\end{figure}

Before diving into a formal introduction, we will consider a motivating example.

\begin{exmp}
Consider the three-dimensional Ornstein-Uhlenbeck process, which solves the 
following stochastic differential equation,
$$
\md \left(\begin{array}{c} X_t^\alpha \\ X_t^\beta \\ X_t^\gamma 
\end{array}\right) 
= \underbrace{\left( \begin{array}{ccc} M_{\alpha \alpha} & 0 & 0 \\
M_{\beta \alpha} & M_{\beta \beta} & 0 \\ 
0 & 0 & M_{\gamma \gamma} \end{array} \right)}_{=M} \left(\begin{array}{c} X_t^\alpha \\ X_t^\beta \\ X_t^\gamma \end{array}\right) \mathrm{d} t + 
\underbrace{\left( \begin{array}{cccc} \sigma_{\alpha} & 0 & 0 & 0 \\
0 & \sigma_{\beta} & 0 & \rho_{\beta} \\ 
0 & 0 & \sigma_{\gamma} & \rho_{\gamma} \end{array} \right)}_{= \sigma_0} \mathrm{d} 
\left(\begin{array}{c} W_t^1 \\ W_t^2 \\ W_t^3 \\ W_t^4 \end{array}\right)
$$
where $(W_t^1, W_t^2, W_t^3, W_t^4)^T$ is a standard four-dimensional 
Brownian motion. In this example, all entries in the matrix $M$ above that are not 
explicitly 0 are assumed nonzero and likewise for $\sigma_0$.

The interpretation of the stochastic differential equation via 
the Euler-Maruyama scheme yields the update equation
\begin{align*}
 \tilde{X}_{t+\Delta}^\alpha & = \tilde{X}_t^\alpha + \Delta M_{\alpha \alpha} 
 \tilde{X}_t^\alpha  + \sqrt{\Delta} \sigma_{\alpha} \varepsilon_t^{1} \\
 \tilde{X}_{t+ \Delta}^\beta  & = \tilde{X}_t^\beta + \Delta 
(M_{\beta \alpha} \tilde{X}_t^{\alpha}  + 
M_{\beta \beta} \tilde{X}_t^\beta) + \sqrt{\Delta} \left(\sigma_{\beta} 
\varepsilon_t^{2} +  
\rho_{\beta} \varepsilon_t^{4}\right) \\
\tilde{X}_{t+\Delta}^\gamma & = \tilde{X}_t^\gamma +
\Delta M_{\gamma \gamma} \tilde{X}_t^\gamma  + 
\sqrt{\Delta} \left( \sigma_{\gamma} \varepsilon_t^{3} + \rho_{\gamma} \varepsilon_t^{4}\right) 
\end{align*}
where $\varepsilon_t \sim \mathcal{N}(0, I)$.
%
%
The Euler-Maruyama scheme evaluated in $t = n \Delta$ for $n \in \mathbb{N}_0$
gives a process, $(\tilde{X}_{n\Delta})_{n\geq0}$,  which, as $\Delta \to 0$,
converges to the Ornstein-Uhlenbeck process, $(X_t)_{t \geq 0}$, solving the
stochastic differential equation. From the update equations we see  that the
infinitesimal increment of each coordinate depends on that coordinate's own
value, and coordinate $\beta$ depends, in addition, on coordinate $\alpha$
(because $M_{\beta \alpha} \neq 0$). Moreover, the increments for coordinates
$\beta$ and $\gamma$ are correlated as they share the error variable
$\varepsilon_t^4$. Figure \ref{fig:unrolledcDG} (left) provides a graphical
representation with arrows readily read off from the drift matrix $M$ and the
diffusion matrix $\sigma \sigma^T$. The `unrolled' graph (Figure 
\ref{fig:unrolledcDG}, right)
is a \emph{directed acyclic graph} (DAG) which corresponds to the 
Euler-Maruyama scheme and provides a discrete-time representation of the 
dynamics. 
\label{ex:simpleOU}
\end{exmp}

A central purpose of this paper is to clarify the mathematical
interpretation of {\it local independence graphs} such as the one in Figure 
\ref{fig:unrolledcDG} (left), 
and our results include a characterization of all graphs with equivalent
mathematical content. As showcased in the example above, we allow for a 
nondiagonal $\sigma_0 \sigma_0^T$ which is a novelty in graphical modeling of 
continuous-time stochastic processes.

\subsection{\ito processes and local independence graphs}

We will for the purpose of this paper focus on vector-valued, continuous-time
stochastic processes with continuous sample paths. Thus let $X = (X_t)_{t \in
\mathcal{T}}$ denote such an $n$-dimensional process with time index $t \in
\mathcal{T} \subseteq \mathbb{R}$ and with $X_t = (X_t^{\alpha})_{\alpha \in
[n]} \in \mathbb{R}^n$ being a real-valued vector indexed by $[n] = \{1, \ldots,
n\}$. The time index set $\mathcal{T}$ will in practice be of the forms $[0,T]$,
$[0,\infty)$, or $\mathbb{R}$, however, we  will in general just assume that
$\mathcal{T}$ is an interval containing  $0$.  

We use \emph{local  independence} \cite{schweder1970,aalen1987, 
	didelez2008,Commenges:2009} to give a mathematically precise
definition of what it means for the historical evolution of one coordinate,
$\alpha$,  to  \emph{not} be predictive of the infinitesimal increment of
another coordinate,  $\beta$, given the historical evolution of a set, $C
\subseteq [n]$, of  coordinates. As such, it is a continuous-time version of
Granger causality \citep[see, e.g.,][]{grangerForecast1986}, 
and its formulation is
directly related to filtration problems for stochastic processes. In a
statistical  context, local independence allows us to express simplifying
structural constraints that are directly useful for forecasting and
such constraints are also useful for causal structure learning. 

The process $X$ is defined on the probability space $(\Omega, \mathcal{F}, P)$
and we let $\sigma(X_s^{\delta}; s \leq t, \delta \in D) \subseteq \mathcal{F}$
denote the  $\sigma$-algebra on $\Omega$ generated by $X_s^{\delta}$ for all $s
\in \mathcal{T}$ up to time $t$ and all $\delta \in D$. For technical reasons,
we  define $\mathcal{F}_t^D$ to be the $P$-completion of the $\sigma$-algebra
$$\bigcap_{t' > t} \sigma(X_s^{\delta}; s \leq t', \delta \in D),$$ so that
$(\mathcal{F}_t^D)_{t \in \mathcal{T}}$ is a complete, right-continuous
filtration for all $D \subseteq [n]$. We will let $\mathcal{F}_t = \mathcal{F}_t^{[n]}$
denote the filtration generated by all coordinates of the process. 
Within this setup we will restrict attention to \ito processes with continuous 
drift and constant diffusion coefficient.  

\begin{defn}[Regular \ito processes] \label{def:regIto}  We say that $X$ is a
\emph{regular \ito process} if there exists a  continuous, 
$\mathcal{F}_t$-adapted
process, $\lambda$, with values in  $\mathbb{R}^n$, and an $n \times n$
invertible matrix $\sigma$ such that  $$W_t = \sigma^{-1} \left(X_t - X_0 -
\int_0^t \lambda_s \mathrm{d}s\right)$$ is an $\mathcal{F}_t$-adapted standard
Brownian motion.
\end{defn}

One reason for the 
interest in the general class of \ito processes is that they are closed 
under marginalization. A regular \ito process is sometimes written in 
differential form as 
\begin{equation} \label{eq:ItoSDE}
\md X_t = \lambda_t \mathrm{d}t + \sigma \md W_t.
\end{equation}
Here $\lambda_t$ is known as the drift of the process and $\sigma$ as the
(constant) diffusion coefficient. We define the \emph{diffusion matrix} for
a regular \ito process as the positive definite matrix
\begin{equation} \label{eq:diffmat}
\Sigma = \sigma\sigma^T.
\end{equation}
Observe that the process $X_t$ may, as in Example \ref{ex:simpleOU},  be defined
as the solution of the stochastic differential equation
\begin{equation} \label{eq:SDE}
\md X_t = \lambda_t \mathrm{d}t + \sigma_0 \md W_t
\end{equation}
for an $m$-dimensional standard Brownian motion $W$ and with the  diffusion
coefficient $\sigma_0$ an $n \times m$ matrix. If $\sigma_0$ has  rank $n$, such
a solution is also a regular \ito process with diffusion matrix $\Sigma =
\sigma_0 \sigma_0^T$. Indeed, we can take $\sigma = (\sigma_0 \sigma_0^T)^{1/2}$
in Definition \ref{def:regIto}. Observe also that for any regular \ito process,
$$X_t - X_0 -  \int_0^t \lambda_s \mathrm{d}s = \sigma W_t$$
is an $\mathcal{F}_t$-martingale and $\int_0^t \lambda_s \mathrm{d}s$ is 
the compensator of $X_t$ in its Doob-Meyer decomposition.


\begin{defn} Let $X$ be a regular \ito process with drift $\lambda$,
let $\alpha, \beta \in [n]$, and let $C \subseteq [n]$. 
We say that \emph{$\beta$ is locally independent of $\alpha$ given $C$}, 
and write $\alpha \not\rightarrow  \beta \mid C$, if the process 
$$ t \mapsto E\left(\lambda^{\beta}_t \mid \mathcal{F}_t^{C}\right)$$
is a version of $t \mapsto E\left(\lambda^{\beta}_t \mid \mathcal{F}_t^{C \cup 
\{\alpha\}}\right)$.
\label{def:locindSingletonA}
\end{defn} 

It follows immediately from the definition that  $\alpha \not\rightarrow  \beta
\mid [n] \setminus \{ \alpha\}$ if $\lambda_t^{\beta}$  is $\mathcal{F}_t^{[n]
\setminus \{\alpha \}}$-measurable. That is, if $\lambda_t^{\beta}$ does not depend on
the sample path of the $\alpha$-coordinate.


We define a {\it local independence graph} below and this generalizes the 
definitions
of  \citet{didelez2008} and \citet{Mogensen2020a} in the context of 
continuous-time  
stochastic
processes to  allow for a nondiagonal $\Sigma$. \citet{eichlerGranger2007}
gives a related definition in the case of time series (discrete time) with 
correlated
errors and uses the term {\it path diagram} (see also Definition 
\ref{def:CLIG}).

\begin{defn}[Local independence graph] 
Consider a regular \ito process with diffusion matrix $\Sigma$. A \emph{local
independence graph} is a graph, $\mathcal{D}$, with nodes $[n]$  such that for 
all $\alpha,\beta\in [n]$ 
$$\alpha \not \rightarrow_\D \beta \ \ \Rightarrow \ \ 
\alpha \not\rightarrow  \beta
\mid [n] \setminus \{ \alpha\}$$
 and such that for $\alpha \neq \beta$
$$\alpha \not \unEdge_{\D} \beta \ \ \Rightarrow  \ \
\Sigma_{\alpha \beta} = 0$$
where $\rightarrow_\D$ denotes a directed edge in $\D$ and $\alpha \unEdge_{\D} 
\beta$ denotes a blunt edge.
\label{def:LIG}
\end{defn}

A local independence graph can be inferred directly from 
$\lambda$ and $\Sigma$, see also Definition \ref{def:CLIG} below. It is 
primarily of interest when it can be used to infer non-trivial results about
additional local independencies. To this end, 
\citet{mogensenUAI2018} show that regular \ito processes with a  diagonal
$\sigma$ satisfy a so-called global 
Markov  
property with respect to their local independence graphs -- assuming
certain integrability constraints are satisfied -- and one can 
read off local independencies from the graph using a straightforward 
algorithm. This allows us to
answer a filtration question: for $D \subseteq [n]$  and $\beta \in 
[n]$, which
coordinates in $D$ does $$E\left(\lambda_t^{\beta} \mid 
\mathcal{F}_t^D\right)$$ 
depend upon? We conjecture that a generalization of the global Markov property
holds for nondiagonal $\sigma$ as well, but this cannot be shown using the
same techniques as in \cite{mogensenUAI2018}. We do, however, show in
Theorem \ref{def:locind} that for a particular class of \ito 
diffusions the global Markov property does in fact hold for the 
{\it canonical} local independence graph that will be defined below. 

\subsection{\ito diffusions}

A regular \ito diffusion is a regular \ito process such that the drift is of 
the form
$$\lambda_t = \lambda(X_t)$$
for a continuous function $\lambda : \mathbb{R}^n \to \mathbb{R}^n$. 
In differential form,

$$\md X_t = \lambda(X_t) \md t + \sigma \md W_t.$$

\ito diffusions with a constant diffusion coefficient are particularly 
interesting examples of \ito processes. They are Markov processes, but 
they are not closed under marginalization and we need to consider the larger 
class of \ito processes to obtain a class which is closed under 
marginalization.

\begin{defn}[Canonical local independence graph] Let $X$ be a regular 
\ito diffusion with a continuously differentiable drift $\lambda : \mathbb{R}^n \to \mathbb{R}^n$ and diffusion
matrix $\Sigma$. The canonical local independence graph is the graph, 
$\mathcal{D}$, with nodes $[n]$  such that for 
all $\alpha,\beta\in [n]$ 
$$\partial_{\alpha} \lambda_{\beta} \neq 0 \ \ \Leftrightarrow \ \  \alpha \rightarrow_\D \beta$$
 and such that for $\alpha \neq \beta$
$$\Sigma_{\alpha \beta} \neq 0 \ \ \Leftrightarrow  \ \ \alpha \unEdge_{\D} \beta.$$
	\label{def:CLIG}
\end{defn}

As $\partial_{\alpha} \lambda_{\beta} = 0$ implies that $\lambda_t^\beta = \lambda_{\beta}((X_t^{\delta})_{\delta \in [n] \setminus\{\alpha\}})$
is $\mathcal{F}_t^{[n] \setminus\{\alpha\}}$-measurable, the following result 
is an immediate consequence of Definitions \ref{def:LIG} and \ref{def:CLIG}.

\begin{prop} \label{prop:simpleobs}
	 The canonical local independence graph is a local independence graph.
\end{prop}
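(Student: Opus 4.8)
The plan is to show that the canonical local independence graph $\D$ of Definition \ref{def:CLIG} satisfies both defining implications of a local independence graph in Definition \ref{def:LIG}. Since Definition \ref{def:CLIG} gives two-sided equivalences ($\Leftrightarrow$) whereas Definition \ref{def:LIG} only requires one-sided implications ($\Rightarrow$), the proof amounts to taking the contrapositive of one direction of each equivalence. I would treat the blunt-edge condition and the directed-edge condition separately.

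For the blunt edges the argument is essentially immediate. Definition \ref{def:CLIG} states that for $\alpha \neq \beta$ we have $\Sigma_{\alpha\beta} \neq 0 \Leftrightarrow \alpha \unEdge_\D \beta$. Contraposing the ``$\Leftarrow$'' direction gives exactly $\alpha \not\unEdge_\D \beta \Rightarrow \Sigma_{\alpha\beta} = 0$, which is the second requirement in Definition \ref{def:LIG}. So this half requires no work beyond logical negation.

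For the directed edges, the hint in the paragraph preceding the proposition does the real work. Definition \ref{def:CLIG} gives $\partial_\alpha \lambda_\beta \neq 0 \Leftrightarrow \alpha \rightarrow_\D \beta$, so contraposing ``$\Leftarrow$'' yields $\alpha \not\rightarrow_\D \beta \Rightarrow \partial_\alpha \lambda_\beta = 0$. To connect this to the local independence requirement $\alpha \not\rightarrow \beta \mid [n]\setminus\{\alpha\}$, I would invoke the observation stated just before the proposition: if $\partial_\alpha \lambda_\beta = 0$ identically, then $\lambda_\beta$ does not depend on its $\alpha$-th argument, so $\lambda_t^\beta = \lambda_\beta((X_t^\delta)_{\delta \in [n]\setminus\{\alpha\}})$ is $\mathcal{F}_t^{[n]\setminus\{\alpha\}}$-measurable. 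By the remark following Definition \ref{def:locindSingletonA}, $\mathcal{F}_t^{[n]\setminus\{\alpha\}}$-measurability of $\lambda_t^\beta$ is precisely the condition guaranteeing $\alpha \not\rightarrow \beta \mid [n]\setminus\{\alpha\}$, since then $E(\lambda_t^\beta \mid \mathcal{F}_t^{[n]\setminus\{\alpha\}})$ and $E(\lambda_t^\beta \mid \mathcal{F}_t^{[n]})$ agree (both equal $\lambda_t^\beta$).

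Combining the two parts gives both implications of Definition \ref{def:LIG}, establishing that $\D$ is a local independence graph. The only point requiring mild care is the measurability step: one must confirm that the pathwise identity $\partial_\alpha \lambda_\beta \equiv 0$ (rather than vanishing at isolated points) is what is assumed, so that $\lambda_\beta$ genuinely factors through the coordinates other than $\alpha$; this is exactly what the continuously differentiable drift hypothesis and the definition of the partial derivative condition supply. I do not anticipate any substantive obstacle, as the proposition is an immediate bookkeeping consequence of the two definitions together with the already-stated measurability remark.
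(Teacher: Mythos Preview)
Your proposal is correct and matches the paper's approach exactly: the paper states the proposition as an immediate consequence of Definitions \ref{def:LIG} and \ref{def:CLIG}, noting just before it that $\partial_\alpha \lambda_\beta = 0$ implies $\lambda_t^\beta = \lambda_\beta((X_t^\delta)_{\delta \in [n]\setminus\{\alpha\}})$ is $\mathcal{F}_t^{[n]\setminus\{\alpha\}}$-measurable. Your treatment of the blunt-edge condition and the directed-edge condition via contraposition is precisely this argument spelled out in slightly more detail.
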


Definition \ref{def:CLIG} gives a simple operational procedure for determining 
the canonical local independence graph for a regular \ito diffusion directly from 
$\lambda$ and $\Sigma$. It is, however, possible that $\lambda_{\beta}$ has a functional 
form that appears to depend on the coordinate $\alpha$, while 
actually $\alpha \not\rightarrow  \beta \mid [n]  \setminus \{
\alpha\}$. In such a case, the canonical local independence graph will  
not be minimal. 

\begin{exmp}[Smoluchowski diffusion] In this example we 
link the notion of local independence and the local independence graph to 
classical undirected graphical models (see, e.g., \cite{lauritzen1996}) for a 
special class of diffusions 
that are widely studied in equilibrium statistical physics. A \emph{ 
Smoluchowski 
diffusion} is a regular \ito diffusion with
$$\lambda(x) = - \nabla V(x)$$ 
for a continuously differentiable function $V : \mathbb{R}^n \to \mathbb{R}$
and $\sigma = \sqrt{2 \tau} I$ for a constant $\tau > 0$. Thus 
the diffusion matrix $\Sigma = 2 \tau I$ is diagonal. The function
$V$ is called the potential and $\tau$ is called a 
temperature parameter. Since the drift is a gradient, the dynamics of a Smoluchowski 
diffusion is a gradient flow perturbed by white noise. If $V(x) \to \infty$
for $\|x\| \to \infty$ and 
$$Z = \int e^{- \frac{1}{\tau}V(x)} \mathrm{d} x < \infty,$$
 the diffusion has the Gibbs measure with density 
$$\pi(x) = \frac{1}{Z} e^{- \frac{1}{\tau} V(x)}$$ 
as equilibrium distribution, see Proposition 4.2 in \cite{Pavliotis:2014}.  
When $V$ is twice differentiable, Definition \ref{def:CLIG} gives the 
canonical local independence graph $\mathcal{D}$ with 
arrows $\alpha \rightarrow_\D \beta$ whenever
$\partial_{\alpha} \lambda_{\beta} = \partial_{\alpha} \partial_{\beta} V \neq 0$. 
Since 
$$\partial_{\alpha} \lambda_{\beta} =  \partial_{\alpha} \partial_{\beta} V = 
 \partial_{\beta} \partial_{\alpha} V = \partial_{\beta} \lambda_{\alpha}$$
the graph $\mathcal{D}$ enjoys the symmetry property that 
$\alpha \rightarrow_\D \beta$ if and only if $\beta \rightarrow_\D \alpha$.
We denote by $\mathcal{G}$ the undirected version of $\mathcal{D}$, i.e., 
$\alpha -_\G \beta$ if and only if $\alpha \rightarrow_\D \beta$ if and only if 
$\beta \rightarrow_\D \alpha$. 
For any $\alpha, \beta \in [n]$ with $\alpha \not -_{\mathcal{G}} \beta$ it 
follows from $\partial_{\alpha} \partial_{\beta} V = 
 \partial_{\beta} \partial_{\alpha} V = 0$ that 
 $$V(x) = V_1(x_{\alpha}, x_{- \{\alpha, \beta\}}) + 
 V_2(x_{\beta}, x_{- \{\alpha, \beta\}})$$
where $x_{- \{\alpha, \beta\}}$ denotes the vector $x$ with coordinates 
$x_{\alpha}$ and $x_{\beta}$ removed. From this decomposition of $V$ we 
see that $\pi$ has the pairwise Markov property with respect to $\mathcal{G}$,
and it follows from the Hammersley-Clifford theorem that $\pi$ factorizes 
according to $\mathcal{G}$. That is, the potential has the following additive 
decomposition
$$V(x) = \sum_{c \in \mathcal{C}(\mathcal{G})} V_c(x_c)$$
where $\mathcal{C}(\mathcal{G})$ denotes the cliques of $\mathcal{G}$. This
establishes a correspondence between local independencies for a 
Smoluchowski diffusion and Markov properties of its equilibrium distribution.
\label{exmp:smoluch}
\end{exmp}

We
emphasize that the link in Example \ref{exmp:smoluch} between local
independencies representing structural constraints on the dynamics  on the one
side and Markov properties of an equilibrium distribution on the other side is 
a consequence of the symmetry of  the drift of Smoluchowski
diffusions combined with the  diffusion matrix being a scalar multiple of the
identity matrix.  For diffusions with a non-gradient drift or with a more
complicated diffusion  matrix the equilibrium distribution may have no
conditional independencies even though there are strong structural constraints 
on
the dynamics of the process which can be expressed in terms of a sparse local
independence  graph. A simple process which can illustrate this is the
Ornstein-Uhlenbeck process.

\begin{exmp}[Ornstein-Uhlenbeck processes] A regular \ito diffusion with drift
$$\lambda(x) = M(x - \mu)$$
for an $n \times n$ matrix $M$ and an $n$-dimensional vector $\mu$ is called a
regular \emph{Ornstein-Uhlenbeck process}. With $\mathcal{D}$ its canonical 
local 
independence graph, $\alpha \rightarrow_\D \beta$ whenever $M_{\beta \alpha} 
\neq 0 $, and $\alpha \not\rightarrow \beta 	\mid [n] \setminus \{\alpha\}$ 
if $M_{\beta \alpha} = 0$. If $M$ is a stable matrix, then the
Ornstein-Uhlenbeck process has an invariant Gaussian distribution
$\mathcal{N}(\mu, \Gamma_{\infty})$ where $\Gamma_{\infty}$ solves the Lyapunov
equation, 
$$M \Gamma_{\infty} + \Gamma_{\infty} M^T + \Sigma = 0,$$
see Proposition 3.5 in \cite{Pavliotis:2014} or Theorem 2.12 in 
\cite{Jacobsen:1993}.

If $M$ is also symmetric, then $\lambda$ is a gradient, and if $\Sigma = 2
\tau I$ we see that the solution of the Lyapunov equation is
$\Gamma_{\infty} = - \tau M^{-1}$, and $\lambda$ is the negative  
gradient of the quadratic potential   
$$V(x) = - \frac{1}{2} (x - \mu)^T M (x - \mu) 
= \frac{\tau}{2} (x - \mu)^T \Gamma_{\infty}^{-1} (x - \mu).$$
Thus the equilibrium distribution is in a Gaussian graphical model represented 
by 
an undirected graph
$\mathcal{G}$ in which the edges are determined by the non-zero entries of
$\Gamma_{\infty}^{-1} = - \frac{1}{\tau} M$. For this Smoluchowski diffusion we
see very explicitly that the edge  $\alpha - \beta$ is in $\mathcal{G}$ if and
only if both $\alpha \rightarrow \beta$ and $\beta \rightarrow \alpha$ are in
$\mathcal{D}$. However, it is  not difficult to
find an asymmetric but stable matrix $M$ such that  $\Gamma_{\infty}^{-1}$ is a
dense matrix, even if $\Sigma = I$, and the canonical local independence graph 
cannot in
general be determined from Markov properties of the invariant distribution. 

For a general $M$ and general $\Sigma$, and with $D \subseteq [n]$, 
it follows that 
\begin{align*}
E\left(\lambda_t^\beta \mid \mathcal{F}^D_t\right) 
& = \sum_{\delta \in V} M_{\beta\delta}\left(E\left( X_t^\delta \mid 
\mathcal{F}^D_t\right) - 
\mu_\delta\right) \\
& = \sum_{\delta \in \mathrm{pa}(\beta)} 
M_{\beta\delta}\left(E\left( X_t^\delta \mid \mathcal{F}^D_t\right) - 
\mu_\delta\right),
\end{align*}
where $\mathrm{pa}(\beta) = \{ \delta \mid M_{\beta\delta} \neq 0\}$ 
denotes the set of parents of $\beta$ in $\mathcal{D}$. Thus determining
if $\alpha \not\rightarrow  \beta \mid C$ (Definition 
\ref{def:locindSingletonA}) amounts to determining
if $$E\left( X_t^\delta \mid \mathcal{F}^C_t\right)$$ are versions of 
$$E\left( X_t^\delta \mid \mathcal{F}^{C \cup \{\alpha\}}_t\right)$$ for 
$\delta \in \mathrm{pa}(\beta)$. In words, this means 
that if we can predict the values of all the processes, $X_t^\delta$ for 
$\delta \in \mathrm{pa}(\beta)$,
that enter into the drift of coordinate $\beta$ just as well 
from the $C$-histories as we can from the $C \cup \{\alpha\}$-histories
then $\beta$ is locally independent of $\alpha$ given $C$.
\label{exmp:ou}
\end{exmp}

The following sections of the paper will develop the graph theory needed to
answer questions about local independence via graphical properties of the local
independence graph. This theory can be applied as long as the processes
considered have the global Markov property, and we show that this is the case 
for regular
Ornstein-Uhlenbeck processes with respect to their canonical local independence 
graphs. 

\section{Directed correlation graphs}
\label{sec:graphLI}

A {\it graph} is a pair $(V,E)$ where $V$ is a set of nodes and $E$ is a set of 
edges. Each node represents a coordinate processes and therefore we will 
let $V = \{1,2\ldots,n \} = [n]$ when we model a stochastic 
process $X = (X_t)_{t\in \mathcal{T}}$ such that $X_t = (X_t^\alpha)_{\alpha\in 
[n]} \in \mathbb{R}^n$. Every edge is between a pair of nodes. Edges can be of 
different 
types. 
In this paper, we will consider {\it directed} edges, $\rightarrow$, {\it 
bidirected} edges, $\leftrightarrow$, and {\it blunt} edges, $\unEdge$.
Let 
$\alpha,\beta \in V$. Note that $\alpha\rightarrow\beta$ and 
$\beta\rightarrow\alpha$ are different edges. We do not distinguish between 
$\alpha\leftrightarrow\beta$ and $\beta\leftrightarrow\alpha$, nor between 
$\alpha\unEdge\beta$ and $\beta\unEdge\alpha$. We allow directed, and 
bidirected loops 
(self-edges), $\alpha\rightarrow\alpha$ and 
$\alpha\leftrightarrow\alpha$, but 
not blunt loops, $\alpha\unEdge\alpha$. If the edge $\alpha \rightarrow \beta$ 
is in a 
graph, then we 
say that $\alpha$ is a {\it parent} of $\beta$ and write $\alpha\in 
\pa(\beta)$. If $\alpha$ and $\beta$ are joined by a 
blunt edge, $\alpha\unEdge\beta$, then we say that they are {\it spouses}. We 
use $\alpha\sim\beta$ to 
symbolize a 
generic edge between $\alpha\in V$ and $\beta\in V$ of any of these three 
types. We say that $\alpha$ and $\beta$ are {\it adjacent} in the graph $\D$ if 
$\alpha \sim \beta$ in $\D$. We use the notation $\alpha\sim_\D\beta$ to 
highlight that the edge is in $\D$ and we use subscript $\sim_j$ to identify 
edges, $j\in\mathbb{N}$. We use $\alpha\starrightarrow\beta$ to 
symbolize that 
either 
$\alpha\rightarrow\beta$ or $\alpha\leftrightarrow\beta$.

\begin{defn}
	Let $\mathcal{D} = (V,E)$ be a graph. We say that $\D$ is a \emph{directed 
	graph} (DG) if every edge is directed. We say that $\D$ 
	is a \emph{directed 
	correlation graph} (cDG) if every edge is directed or blunt. We say 
	that $\D$ is a \emph{directed 
	mixed graph} (DMG) if every $e\in E$ is either directed or bidirected.
\end{defn}

The class of DMGs is studied by \citet{mogensenUAI2018, Mogensen2020a}. 
\citet{eichlerGranger2007, eichlerChapter2012} studies classes of graphs 
similar 
to 
cDGs as well as a class of graphs which contains both the DMGs and the cDGs as 
subclasses. This paper is mostly concerned with the class of cDGs, however, we 
mention the DMGs for two reasons: 1) to compare with the cDGs and demonstrate 
their differences, and 2) to show that the concept of $\mu$-separation can be 
applied to both classes of graphs, and therefore also to a superclass of graphs 
containing both the DMGs and the cDGs. In a cDG, a directed edge corresponds 
to a direct dependence in the drift of the process while a blunt edge 
represents a correlation in the driving Brownian motions (Definition 
\ref{def:LIG}). In a DMG, a directed edge has the same interpretation as in a 
cDG, however, 
a bidirected edge corresponds to a dependence arising from partial observation, 
i.e., marginalization. Correlated driving Brownian motions and 
marginalization create different local independence structures, hence the 
distinction between DMGs and cDGs.

A {\it walk}, $\omega$, is an ordered, alternating sequence of nodes 
($\gamma_i$) and 
edges ($\sim_j$) such that 
each edge, $\sim_i$, is between $\gamma_i$ and $\gamma_{i+1}$,

$$
\gamma_1 \sim_1 \gamma_2 \sim_2 \ldots \sim_{k} \gamma_{k+1}.
$$

\noindent For each directed edge, its orientation is also known as otherwise 
$\alpha\rightarrow\alpha$ and $\alpha\leftarrow\alpha$ would be 
indistinguishable. We say that $\gamma_1$ and $\gamma_{k+1}$ are {\it endpoint 
nodes}, and we say that the walk is {\it from} $\gamma_1$ {\it to} 
$\gamma_{k+1}$. For later purposes this orientation of the walk is essential. 
We let $\omega^{-1}$ denote the walk obtained by traversing the nodes and edges 
of $\omega$ in reverse order. At times, we will also say that a walk, $\omega$, 
is {\it between} $\gamma_1$ and $\gamma_{k+1}$, but only when its orientation 
does not matter in which case we essentially identify $\omega$ with 
$\omega^{-1}$.
We 
say that a walk is {\it trivial} if it has no edges and therefore only a single 
node, and otherwise we say that it is {\it nontrivial}. Consider a walk as 
above. We 
say that an nonendpoint node, $\gamma_i$, ($i\neq 1,k+1$) is a {\it collider} 
if the subwalk

$$
\gamma_{i-1} \sim_{i-1} \gamma_i \sim_i \gamma_{i+1}
$$

\noindent is of one the following types

\begin{align*}
	\gamma_{i-1} \starrightarrow &\gamma_i \starleftarrow \gamma_{i+1}, \\
	\gamma_{i-1} \starrightarrow &\gamma_i \unEdge \gamma_{i+1}, \\
	\gamma_{i-1}  \unEdge &\gamma_i \starleftarrow \gamma_{i+1}, \\
	\gamma_{i-1}  \unEdge &\gamma_i \unEdge \gamma_{i+1},
\end{align*}

\noindent and otherwise we say that it is a {\it noncollider}. This means that 
the property of being a collider or a noncollider is relative to a walk and, 
seeing that nodes may be repeated on a walk, it is actually a property of an 
instance of a node on a specific walk. Note that 
endpoint nodes are neither colliders nor noncolliders. We say that $\alpha$ and 
$\beta$ are {\it collider connected} if 
there exists a walk from $\alpha$ to $\beta$ such that every nonendpoint node 
is a collider.

We say that 
$\alpha \starrightarrow \beta$ has a {\it head} at $\beta$, and that $\alpha 
\rightarrow\beta$ has a {\it tail} at $\alpha$. We say that $\alpha \unEdge 
\beta$ 
has a {\it stump} at $\alpha$. We say that edges $\alpha \unEdge \beta$ and 
$\alpha \starrightarrow \beta$ have a {\it neck} at $\beta$. It follows that 
$\gamma_i$ above is a collider if and only if both adjacent edges have a neck 
at $\gamma_i$. A {\it path} is a walk such that every node occurs at most once. 
We say that a path from $\alpha$ to $\beta$ is {\it directed} if every edge on 
the path is directed and pointing towards $\beta$. 
If there is a directed path from $\alpha$ to $\beta$, then we say that $\alpha$ 
is an {\it ancestor} of $\beta$ and that $\beta$ is a {\it descendant} of 
$\alpha$. We let $\an(\beta)$ denote the set of 
ancestors of $\beta$, and for $C\subseteq V$, we define $\an(C) = 
\cup_{\gamma\in C} \an(\gamma)$. Note that $C\subseteq \an(C)$. A {\it cycle} 
is a path $\alpha \sim \ldots \sim \beta$ composed with an edge 
$\beta\sim\alpha$. If the path from $\alpha$ to $\beta$ is directed and the 
edge is $\beta\rightarrow\alpha$, then we say that the cycle is {\it directed}. 
A DG without any directed cycles is said to be a {\it directed acyclic graph} 
(DAG).

When $\D=(V,E)$ is a graph and $\bar{V}\subseteq V$,  
we let 
$\D_{\bar{V}}$ denote the {\it induced graph} on nodes $\bar{V}$, i.e., 
$\D_{\bar{V}} 
= 
(\bar{V}, \bar{E})$,

$$
\bar{E} = \{e \in E: \text{ $e$ is between $\alpha, \beta \in \bar{V}$}  \}.
$$

We will use 
$\mu$-{\it connecting walks} and 
$\mu$-{\it separation} to encode 
independence structures using cDGs.

\begin{defn}[$\mu$-connecting walk \cite{Mogensen2020a}]
	Consider a nontrivial walk, $\omega$,
	
	$$
	\alpha \sim_1 \gamma_2 \sim_2 \ldots \sim_{k-1} \gamma_k \sim_{k} 
	\beta
	$$
	
	\noindent and a set $C\subseteq V$. We say that $\omega$ is  
	\emph{$\mu$-connecting} from $\alpha$ to $\beta$ given $C$ if $\alpha\notin 
	C$, 
	every 
	collider on $\omega$ is in $\an(C)$, no noncollider is in $C$, and 
	$\sim_{k}$ has a head at $\beta$.
	
\end{defn}

It is essential that the above definition uses walks, 
and not only paths. As an example consider $\alpha \unEdge \beta \leftarrow 
\gamma$. In this graph, 
there is no $\mu$-connecting path from $\alpha$ to $\beta$ given $\beta$, but 
there is a $\mu$-connecting walk.

\begin{defn}[$\mu$-separation \cite{Mogensen2020a}]
	Let $\G = (V,E)$ be a cDG or a DMG and let $A,B,C\subseteq V$. We say that 
	$B$ is 
	\emph{$\mu$-separated} from $A$ given $C$ in $\G$ if there is no 
	$\mu$-connecting walk from any $\alpha\in A$ to any $\beta\in B$ given $C$ 
	and we denote this by $\musepG{A}{B}{C}{\G}$, or just $\musep{A}{B}{C}$.
	\label{def:musep}
\end{defn}

When sets $A$, $B$, or $C$ above are singletons, e.g., $A = \{\alpha\}$, we 
write $\alpha$ instead of $\{\alpha\}$ in the context of $\mu$-separation. 
\citet{Mogensen2020a} introduced $\mu$-separation as a generalization of 
$\delta$-separation \citep{didelez2000, didelez2008}, however, only in DMGs, 
and not in cDGs. As in other classes of graphs, one can decide $\mu$-separation 
in cDGs by using an auxiliary 
undirected graph. 
This is described in 
Appendix \ref{app:aug}.

When 
 $\mathcal{D} = (V,E)$ is a cDG or DMG, we define its {\it independence model} 
 (or {\it separation model}), 
 $\I(\mathcal{D})$, as the collection of $\mu$-separations that hold, i.e.,
 
 $$
 \I(\mathcal{D}) = \{ (A,B,C): A,B,C \subseteq V,\ 
 \musepG{A}{B}{C}{\mathcal{D}} 
 \}.
 $$
 
 \begin{defn}[Markov equivalence]
 	Let $\D_1 = (V,E_1)$ be a cDG or a DMG and let $\D_2 = (V,E_2)$ be a cDG or 
 	a 
 	DMG. We say that $\D_1$ 
 	and 
 	$\D_2$ are \emph{Markov equivalent} if $\I(\D_1) = \I(\D_2)$. 
 \end{defn}
 
 For any finite set $V$, Markov equivalence
 is an 
 equivalence relation on a set of graphs with node set $V$. When $\D$ 
 is a cDG or a DMG, we let 
 $[\D]$ 
 denote 
 the Markov equivalence class of $\D$ {\it restricted to its own class 
 of graphs}. That is, if $\D$ is a cDG, then $[\D]$ denotes the set of Markov 
 equivalent cDGs. If $\D$ is a DMG, then $[\D]$ denotes the set of Markov 
 equivalent DMGs. For a cDG or DMG, $\D = 
 (V,E)$, and a directed, blunt, or bidirected edge $e$ between $\alpha \in V$ 
 and $\beta\in 
 V$, 
 we 
 use $\D + e$ to denote the graph $(V, E\cup \{e \})$.
 
 \begin{defn}[Maximality]
 	Let $\D=(V,E)$ be a cDG (DMG). We say that $\D$ is \emph{maximal} if no 
 	directed or blunt (directed or bidirected) 
 	edge can 
 	be 
 	added Markov equivalently, i.e., if for every directed or blunt (directed 
 	or bidirected) edge, $e$, such that
 	$e\notin E$, it holds that
 	$\D$ and $\D + e$ are not Markov equivalent.
 	\label{def:max}
 \end{defn}

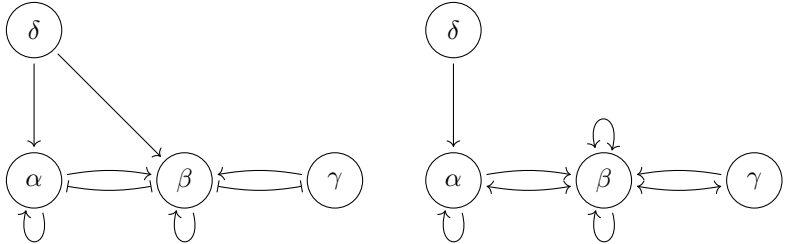
\begin{figure}
	\begin{minipage}{0.45\linewidth}
		\begin{tikzpicture}
		\node[shape=circle,draw=black] (a) at (0,0) {$\alpha$};
		\node[shape=circle,draw=black] (b) at (2,0) {$\beta$};
		\node[shape=circle,draw=black] (c) at (4,0) {$\gamma$};
		\node[shape=circle,draw=black] (d) at (0,2) {$\delta$};
		
		\path [->] (a) edge [bend left = 10] node {} (b);
		\path [->] (d) edge [bend left = 0] node {} (a);
		\path [->] (d) edge [bend left = 0] node {} (b);
		\path [|-|] (a) edge [bend right = 10] node {} (b);
		\path [|-|](b) edge [bend right = 10] node {} (c);
		\path [<-](b) edge [bend left = 10] node {} (c);
		\draw[every loop/.append style={->}] (b) edge[loop below] 
		node {}  (b);
		\draw[every loop/.append style={->}] (a) edge[loop below] 
		node {}  (a);	
		\end{tikzpicture}
	\end{minipage}
	\begin{minipage}{0.45\linewidth}
		\begin{tikzpicture}
		\node[shape=circle,draw=black] (a) at (0,0) {$\alpha$};
		\node[shape=circle,draw=black] (b) at (2,0) {$\beta$};
		\node[shape=circle,draw=black] (c) at (4,0) {$\gamma$};
		\node[shape=circle,draw=black] (d) at (0,2) {$\delta$};
		
		\path [->] (a) edge [bend left = 10] node {} (b);
		\path [->] (d) edge [bend left = 0] node {} (a);
		\path [<->] (a) edge [bend right = 10] node {} (b);
		\path [<->](b) edge [bend right = 10] node {} (c);
		\path [<-](b) edge [bend left = 10] node {} (c);
		\draw[every loop/.append style={->}] (b) edge[loop below] 
		node {}  (b);
		\draw[every loop/.append style={->}] (a) edge[loop below] 
		node {}  (a);	
		\draw[every loop/.append style={<->}] (b) edge[loop above] 
		node {}  (b);
		\end{tikzpicture}
		
	\end{minipage}
	\caption{Example cDG (left) and example DMG 
	(right). The blunt edges 
	in a cDG correspond to correlated driving processes which is different from 
	the bidirected edges of a DMG as those correspond to marginalization, i.e., 
	unobserved processes. The notion of $\mu$-separation can be applied to both 
	classes of graphs. Left: cDG 
	on 
	nodes $V=\{\alpha,\beta,\gamma,\delta \}$. $\gamma$ is $\mu$-separated 
	(Definition \ref{def:musep}) from 
	$\delta$ by $\alpha$ as $\beta\notin \an(\alpha)$ is a collider on any walk 
	from $\delta$ to $\gamma$. On the other hand, $\alpha$ is not 
	$\mu$-separated from $\beta$ 
	given $\emptyset$ as, e.g., $\beta 
	\unEdge 
	\alpha \rightarrow\alpha$ is 
	$\mu$-connecting given $\emptyset$. The same walk is not $\mu$-connecting 
	from 
	$\beta$ to $\alpha$ given $\alpha$, however, $\beta \leftarrow 
	\delta\rightarrow\alpha$ is $\mu$-connecting from $\beta$ to $\alpha$ given 
	$\alpha$. We see that $\alpha$ is $\mu$-separated from $\beta$ 
	given $\{\alpha,\delta \}$. Right: bidirected edges have heads at both ends 
	and this means that $\beta \leftrightarrow \alpha$ is $\mu$-connecting from 
	$\beta$ to $\alpha$ given any subset of $V\setminus \{\beta\}$. This is not 
	true in the cDG (left).}
	\label{fig:cDMGsExmps}
\end{figure}

\begin{rem}
	\citet{eichlerGranger2007, eichler2010}, and \citet{eichler2012} describe 
	graphs that 
	represent local independence (or 
	Granger non-causality) in time series in the presence of correlated error 
	processes. In those papers, blunt edges are 
	represented by (dashed or solid) undirected edges, $-$, while we use 
	$\unEdge$. 
	The former 
	representation 
	could suggest that a blunt edge acts like an edge with tails in both 
	ends, however, 
	this is not the case. It also does not act like the bidirected edges in a 
	DMG, and this warrants the usage of an edge with a third kind of mark.
	
	Also note that while this is not paramount in the case of cDGs, notational 
	clarity and simplicity
	become more important when considering graphical marginalizations of these 
	graphs. In this case, one needs to consider also edges that, when composed 
	with other edges, act like a 
	blunt edge in one end and like a directed edge in the other (see 
	also \cite{eichlerChapter2012}). Using our 
	representation, this can be visualized by the edge $\mapsto$. We 
	will 
	not 
	consider this larger class of graphs in this paper, however, we choose this 
	representation as it extends naturally.
\end{rem}

\subsection{A global Markov property}

In this section, we state a result showing that an Ornstein-Uhlenbeck process, 
$X$, 
satisfies a 
global Markov property with respect to a cDG, $\D$, when $\D$ is the canonical 
local 
independence graph of $X$ (Definition \ref{def:CLIG}). As we identify the 
coordinate processes of $X$ with 
nodes in $\D$, we use 
$V$ to denote both the node set of $\D$ and as the index set of the 
coordinate 
processes of $X$. In the 
case of a diagonal $\Sigma$, the global Markov property was shown in 
\cite{mogensenUAI2018}, 
and we extend this to the case of nondiagonal $\Sigma$, i.e., allowing 
for correlated driving Brownian motions. Before stating the result, 
we give a more 
general definition of local independence in \ito processes to allow 
non-singleton sets $A$ and $B$.

\begin{defn} Let $X$ be a regular \ito process with drift $\lambda$,
	and let $A,B,C \subseteq V$. 
	We say that $B$ is locally independent of $A$ given $C$, 
	and write $A \not\rightarrow  B \mid C$, if for all $\beta\in B$ the 
	process 
	$$ t \mapsto E\left(\lambda^{\beta}_t \mid \mathcal{F}_t^{C}\right)$$
	is a version of $$t \mapsto E\left(\lambda^{\beta}_t \mid \mathcal{F}_t^{C 
	\cup 
	A}\right).$$
	\label{def:locind}
\end{defn}

\begin{thm}
	Let $X = (X_t)_{t \geq 0}$ be a regular Ornstein-Uhlenbeck process, let 
	$\D$ be its canonical  
	local independence graph (Definition 
	\ref{def:CLIG}), and let
	$A,B,C \subseteq V$. Assume that  
	$X_0$ is a (non-degenerate) multivariate Gaussian vector with independent 
	entries and that $X_0$ is independent of the Brownian motion driving the 
	Ornstein-Uhlenbeck process. If 
	$B$ is $\mu$-separated from $A$ given $C$ in 
	$\mathcal{D}$, 
	then $B$ is locally independent of $A$ given $C$.
	\label{thm:globalMarkov}
\end{thm}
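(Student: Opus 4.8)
The plan is to center the process (assume $\mu = 0$ without loss of generality, replacing $X_t$ by $X_t - \mu$) and to exploit that a regular Ornstein-Uhlenbeck process with a Gaussian initial condition is a Gaussian process, so that every conditional expectation below is an orthogonal projection in $L^2(P)$. First I would reduce local independence to a statement about the coordinate processes: by the computation in Example \ref{exmp:ou}, for any $D \subseteq V$ and $\beta \in V$ we have $\E{\lambda_t^\beta}{\mathcal{F}_t^D} = \sum_{\delta \in \pa(\beta)} M_{\beta\delta}\, \E{X_t^\delta}{\mathcal{F}_t^D}$, so it suffices to prove that for every $\beta \in B$ and every $\delta \in \pa(\beta)$ the process $t \mapsto \E{X_t^\delta}{\mathcal{F}_t^C}$ is a version of $t \mapsto \E{X_t^\delta}{\mathcal{F}_t^{C \cup A}}$. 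Writing $\mathcal{H}_t^D$ for the closed linear span in $L^2(P)$ of $\{X_s^\gamma : s \le t,\ \gamma \in D\}$, Gaussianity turns this into the Hilbert-space statement that the projection of $X_t^\delta$ onto $\mathcal{H}_t^C$ equals its projection onto $\mathcal{H}_t^{C \cup A}$, equivalently that the residual $X_t^\delta - \Pi_{\mathcal{H}_t^C} X_t^\delta$ is orthogonal to every $X_s^\alpha$ with $s \le t$ and $\alpha \in A$.

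To bring in the graph I would discretize. Fixing $t$ and a grid of mesh $\Delta$, the Euler-Maruyama scheme produces a discrete-time Gaussian process $\tilde X^\Delta$ whose one-step map is $\tilde X_{t_{i+1}} = (I + \Delta M)\tilde X_{t_i} + \xi_i$ with independent innovations $\xi_i \sim \mathcal{N}(0, \Delta \Sigma)$. This is a linear structural equation system whose dependence structure is exactly an ``unrolled'' graph: a directed edge $\tilde X_{t_i}^\eta \to \tilde X_{t_{i+1}}^\beta$ whenever $\eta \in \pa(\beta) \cup \{\beta\}$ (since $(I+\Delta M)_{\beta\eta} \neq 0$ for small $\Delta$ precisely there), together with, at each slice, a bidirected edge $\tilde X_{t_i}^\gamma \leftrightarrow \tilde X_{t_i}^{\gamma'}$ exactly when $\Sigma_{\gamma\gamma'} \neq 0$, i.e. when $\gamma \unEdge_\D \gamma'$. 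Thus blunt edges of $\D$ become same-slice bidirected edges and directed edges become time-lagged arrows. Being a linear structural equation system with correlated errors, $\tilde X^\Delta$ obeys the global Markov property with respect to this unrolled directed mixed graph, so $m$-separation there implies conditional independence and hence equality of the corresponding conditional expectations.

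The combinatorial core is a correspondence lemma: $\musep{A}{B}{C}$ in the cDG $\D$ implies that, in the unrolled graph, each node $\tilde X_{t_i}^\delta$ with $\delta \in \pa(\beta)$ and $\beta \in B$ is $m$-separated from the entire $A$-history up to time $t_i$ given the entire $C$-history up to time $t_i$. The content is that a $\mu$-connecting walk must end with a head at $\beta$, i.e. through an edge $\delta \to \beta$ with $\delta \in \pa(\beta)$, which lifts to a time-lagged arrow into the final slice, while blunt edges traversed along the walk lift to same-slice bidirected edges and the collider/noncollider status of each interior node is preserved under unrolling; the direction I actually need is the converse one, that any $m$-connecting path in the unrolled graph projects back to a $\mu$-connecting walk in $\D$. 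I would establish this in the spirit of the analogous statements for independent-error unrollings \citep{sokol2014, danks2013, Hyttinen2016} and \citet[supplementary material]{Mogensen2020a}, the only new ingredient being the bookkeeping for the symmetric blunt/bidirected edges. Combining the correspondence with the Markov property of $\tilde X^\Delta$ yields, for every mesh, the discrete orthogonality relations asserting that the residual of $\tilde X_{t_i}^\delta$ after projecting onto the discrete $C$-history is orthogonal to the discrete $A$-history.

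Finally I would pass to the limit $\Delta \to 0$. Because the Ornstein-Uhlenbeck paths are $L^2$-continuous and the grids refine, the discrete $C$-history subspaces increase to $\mathcal{H}_t^C$ and the discrete $A$-history subspaces to $\mathcal{H}_t^A$, so the relevant orthogonal projections converge in $L^2$; together with the (explicit, for a linear SDE) $L^2$-convergence of the Euler-Maruyama skeleton $\tilde X^\Delta$ to $X$ on the grid, the discrete orthogonality relations pass to the continuous residual, giving $X_t^\delta - \Pi_{\mathcal{H}_t^C} X_t^\delta \perp \mathcal{H}_t^A$, which is exactly the equality of conditional expectations to which we reduced, and hence $A \not\rightarrow B \mid C$. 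I expect this final limiting step to be the main obstacle: one must control two approximations simultaneously -- the refinement of the conditioning $\sigma$-algebras and the convergence of the Euler-Maruyama process to the true solution -- and argue that the discrete projection identities survive in the limit. It is precisely the clean linear-Gaussian structure, with innovations exactly $\mathcal{N}(0,\Delta\Sigma)$ and an exactly solvable transition, that I expect to make this interchange of limit and conditioning tractable, and that explains why the techniques for the diagonal case do not transfer directly.
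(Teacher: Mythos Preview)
Your strategy is genuinely different from the paper's. The paper never discretizes: it works directly with the Kalman--Bucy filtering equations for $m_t = \E{X_t^U}{\mathcal{F}_t^W}$ (with $W=A\cup C$, $U=V\setminus W$), which involve the conditional covariance process $\gamma_t$ solving a matrix differential Riccati equation. The heart of the argument is to show that $\gamma_t$ carries a block-zero pattern with respect to a partition $V_1,\ldots,V_6$ of $V$ defined through $m$-separation statements. This is done first for the associated \emph{algebraic} Riccati equation---using stabilizability/detectability of the relevant matrix pairs and convergence of Newton iterates that preserve the sparsity---and then transferred to $\gamma_t$ via an explicit representation of the differential solution in terms of the algebraic one. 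With this sparsity in hand, the filtering equation for the block $m_t^{V_1}$, which contains $\pa_\D(B)\setminus(A\cup C)$, decouples and does not involve $X^A$; measurability follows. No limits of discrete approximations are taken.

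Your correspondence lemma is correct: projecting an $m$-connecting path in the unrolled ADMG to $\D$ and appending $\delta\to\beta$ does yield a $\mu$-connecting walk, since collider status and ancestry survive the projection and $\delta$ becomes a noncollider outside $C$. The genuine gap is the limiting step, and the difficulty is sharper than you indicate. The orthogonality you obtain is for $\tilde X^\Delta$ relative to the closed linear spans of \emph{its own} history, not the spans generated by $X$; these are different subspaces of $L^2(P)$, and $L^2$-closeness of generators does not imply convergence of the associated orthogonal projections (a small perturbation of a generating family can change its span drastically). The natural fix---replace Euler--Maruyama by the exact discretization so that the conditioning is genuinely on $X$ at grid points---fails for the opposite reason: the transition matrix $e^{\Delta M}$ and the innovation covariance $\int_0^\Delta e^{sM}\Sigma e^{sM^T}\,\mathrm{d}s$ do not inherit the sparsity of $M$ and $\Sigma$, so the unrolled graph is no longer the unrolling of $\D$ and your correspondence lemma no longer applies. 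The paper's continuous-time Riccati analysis is precisely what sidesteps this dilemma.
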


 The result allows us to infer sparsity in the dependence structure in the 
 evolution of the process from structural sparsity encoded by a cDG 
 and 
 $\mu$-separation. The proof of Theorem \ref{thm:globalMarkov} is found in 
 Appendix 
 \ref{app:proofMarkov} and it uses a set of 
 equations describing the
 conditional mean processes, $t \mapsto \E{X_t^U}{\mathcal{F}_t^W}$, $V = 
 U\disjU W$, see \cite{liptser1977}. From this representation, one can reason 
 about 
 the measurability of the conditional mean processes. 

The global Markov property can be seen to be somewhat similar to that of chain 
graphs under the MVR interpretation \cite{cox1993, sonntag2015} (see also 
\cite{koster1999}).

\section{Markov equivalence of directed correlation graphs}
\label{sec:ME}

Different cDGs can encode the same separation model and in this section we 
will describe the Markov equivalence classes of cDGs. This is essential as it 
allows us to understand which graphical structures 
represent the same local independencies. This understanding is needed if we 
want 
to learn graphical representations from tests of local independence in observed 
data. We begin this section by noting a strong link between the independence 
model 
of a cDG and 
its directed edges.

\begin{prop}
	Let $\mathcal{D} = (V,E)$ be a cDG. Then 
	$\alpha\rightarrow_{\mathcal{D}}\beta$ if and only if 
	$\musep{\alpha}{\beta}{V\setminus \{\alpha\}}$ does not hold.
	\label{prop:dirEdgeSep}
\end{prop}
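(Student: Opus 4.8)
The plan is to prove both implications directly from the definition of a $\mu$-connecting walk, with the special structure of the conditioning set $C = V \setminus \{\alpha\}$ doing the essential work, since the only node of $V$ lying \emph{outside} $C$ is $\alpha$ itself.

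For the forward implication, suppose $\alpha \rightarrow_{\mathcal{D}} \beta$. I would simply exhibit the one-edge walk $\alpha \rightarrow \beta$ and verify that it is $\mu$-connecting from $\alpha$ to $\beta$ given $V \setminus \{\alpha\}$: it is nontrivial, its source $\alpha$ is not in $C$, it has no nonendpoint nodes (so the collider and noncollider conditions hold vacuously), and its single, final edge has a head at $\beta$. Hence $\musep{\alpha}{\beta}{V\setminus\{\alpha\}}$ fails. The same one-edge walk handles the directed self-loop case $\alpha = \beta$.

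For the converse, I would show that the existence of \emph{any} $\mu$-connecting walk $\alpha = \gamma_1 \sim_1 \cdots \sim_k \gamma_{k+1} = \beta$ given $V \setminus \{\alpha\}$ already forces $\alpha \rightarrow_{\mathcal{D}} \beta$. The entry point is the last edge $\sim_k$: by definition of a $\mu$-connecting walk it has a head at $\beta$, and since $\mathcal{D}$ is a cDG (it has no bidirected edges, and a blunt edge $\gamma_k \unEdge \beta$ carries only a stump at $\beta$), the only edge type furnishing a head is a directed edge; thus $\sim_k$ is $\gamma_k \rightarrow \beta$. Next I would inspect the node $\gamma_k$. If $k = 1$, the walk is exactly $\alpha \rightarrow \beta$ and we are done. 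If $k \geq 2$, then $\gamma_k$ is a nonendpoint node, and because $\sim_k$ has a \emph{tail} at $\gamma_k$, the instance of $\gamma_k$ cannot be a collider (a collider requires a neck on both incident edges); so it is a noncollider. Since no noncollider may lie in $C = V \setminus \{\alpha\}$, and $\alpha$ is the only node outside $C$, the node occupying position $k$ must satisfy $\gamma_k = \alpha$, and therefore $\sim_k$ is the edge $\alpha \rightarrow \beta$.

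Because the argument is so short, the difficulty is bookkeeping rather than a genuine obstacle. The two points requiring care are, first, the precise head/stump/neck vocabulary, which is exactly what rules out a blunt final edge and pins $\sim_k$ down as directed; and second, the reminder that collider and noncollider status is a property of a node \emph{instance} on the walk, so the conclusion $\gamma_k = \alpha$ correctly refers to whichever node sits at position $k$, irrespective of repetitions elsewhere on $\omega$. I would also note that the ancestral condition ``every collider lies in $\an(C)$'' is never invoked in either direction: the tail of the final directed edge alone delivers the result.
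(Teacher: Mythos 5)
Your proof is correct and follows essentially the same route as the paper's: the forward direction exhibits the single-edge walk, and the converse pins down the final edge of any connecting walk as directed into $\beta$ with its source forced to be $\alpha$ because a tail makes that instance a noncollider and $\alpha$ is the only node outside the conditioning set. The paper phrases the converse contrapositively (assuming the edge absent and concluding the walk is closed), but the substance is identical.
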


The proposition can be found in \cite{Mogensen2020a} in the case of 
DGs. Proposition \ref{prop:dirEdgeSep} implies that if $\D_1$ and $\D_2$ are 
Markov equivalent cDGs, then they have the same directed edges, and therefore 
$\an_{\D_1}(C) = \an_{\D_2}(C)$ for all node sets $C$. We will often omit the 
subscript when it is clear from the context from which graph(s) the ancestry 
should read off.

\begin{proof}
	If the edge is in the graph, it is $\mu$-connecting given any subset of $V$ 
	that does not contain $\alpha$, in particular given $V\setminus 
	\{\alpha\}$. On the other hand, assume $\alpha \rightarrow \beta$ is not in 
	the graph. Any $\mu$-connecting walk from $\alpha$ to $\beta$ must have a 
	head at $\beta$,
	
	$$
	\alpha \sim \ldots \sim \gamma \rightarrow \beta.
	$$
	
	\noindent We must have that $\gamma \neq \alpha$, and it follows that 
	$\gamma$ is in the conditioning set, i.e., the walk is closed.
\end{proof}

In graphs that represent conditional independence in multivariate 
distributions, such as ancestral graphs and acyclic directed mixed graphs, one 
can use {\it inducing paths} to 
characterize which nodes cannot be separated by any conditioning set 
\citep{vermaEquiAndSynthesis, richardson2002}. In DMGs, inducing 
paths can be defined similarly \citep{Mogensen2020a}. In cDGs, we define 
both inducing paths and {\it weak inducing paths}. We say that a path is a {\it 
collider path} if every nonendpoint node on the path is a collider. If 
$\alpha\neq \beta$, then $\alpha\rightarrow\beta$ and $\alpha\unEdge\beta$ are 
both collider paths.

\begin{defn}[Inducing path (strong)]
	A (nontrivial) collider path from $\alpha$ to $\beta$ is a \emph{(strong) 
	inducing path} 
	if the 
	final edge has a head at $\beta$ and every nonendpoint node is an ancestor 
	of $\alpha$ 
	or of $\beta$.
	\label{def:IP}
\end{defn}

\citet{Mogensen2020a} also allow cycles in the definition of 
inducing paths. In the following, we assume that $\alpha\rightarrow\alpha$ for 
all $\alpha\in V$ and therefore this would be an unnecessary complication. We 
see immediately that in a cDG, the only inducing path is a directed 
edge. However, we include this 
definition to conform with the terminology in DMGs where more elaborate 
inducing paths exist. If we drop one of the conditions from Definition 
\ref{def:IP}, then we obtain a graphical structure which is more interesting in 
cDGs, a 
{\it weak inducing path}.

\begin{defn}[Weak inducing path]
	A (nontrivial) collider path between $\alpha$ and $\beta$ is a \emph{weak 
	inducing path} 
	if every nonendpoint node is an ancestor of $\alpha$ or $\beta$.
	\label{def:wIP}
\end{defn}

\noindent We note that a strong inducing path is also a weak inducing path. 
Furthermore, if there is a weak inducing path from $\alpha$ to $\beta$, there 
is also one from $\beta$ to $\alpha$, and this justifies saying that a weak 
inducing path is {\it between} $\alpha$ and $\beta$ in Definition 
\ref{def:wIP}. Also note that a {\it weak} inducing path 
is most often called an inducing path in the literature on acyclic graphs. When 
we just say {\it inducing path}, we mean a strong inducing path.

If $\D$ is a cDG such that 
$\alpha\rightarrow_{\D}\alpha$ for 
all $\alpha \in V$, then we say that $\D$ {\it contains every loop}. From this 
point on, we will 
assume that the cDGs we consider all contain 
every loop.

\begin{prop}
	Let $\D = (V,E)$ be a cDG such that $\alpha\rightarrow\alpha$ for all 
	$\alpha\in V$. 
	There is a weak inducing path between $\alpha$ and $\beta$ if and only if 
	there 
	is no $C\subseteq V\setminus \{\alpha,\beta\}$ such that 
	$\musep{\alpha}{\beta}{C}$.
	\label{prop:inSepWIP}
\end{prop}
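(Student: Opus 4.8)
The plan is to prove both implications at once around the candidate separating set $C^{*} = \an(\{\alpha,\beta\}) \setminus \{\alpha,\beta\}$, relying throughout on one elementary principle: if $\gamma \notin \an(C)$ for a set $C$, then no node on a directed path emanating from $\gamma$ (and in particular $\gamma$ itself) can lie in $C$, since any such node $u$ would have $\gamma$ as an ancestor and would force $\gamma \in \an(u) \subseteq \an(C)$. I will call this the ``directed-path interiors avoid $C$'' principle; it is used in dual ways in the two directions and is what makes the walk manipulations admissible.

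For the direction from a weak inducing path to inseparability, I would fix a weak inducing path $\pi$ between $\alpha$ and $\beta$ and an arbitrary $C \subseteq V \setminus \{\alpha,\beta\}$, and build a $\mu$-connecting walk from $\alpha$ to $\beta$ given $C$. Since every internal node of $\pi$ is a collider, each $\pi$-edge has a neck toward each neighbour, so moving along a $\pi$-edge always enters the next node through a head. A collider of $\pi$ lying in $\an(C)$ is simply kept, since it is activated; the work is in the \emph{blocked} colliders $\gamma \notin \an(C)$, which by the inducing-path hypothesis satisfy $\gamma \in \an(\alpha) \cup \an(\beta)$ and, by the principle above, $\gamma \notin C$. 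Such a $\gamma$ is turned into a noncollider by leaving it along a directed path to an endpoint: if $\gamma \in \an(\beta)$ I route $\gamma \to \cdots \to \beta$ and terminate (this ends with a head at $\beta$, and all interior nodes avoid $C$); if $\gamma \in \an(\alpha)$ I run a directed path down to $\alpha$ and come back up its reverse, which re-enters $\gamma$ through a tail, making $\gamma$ a head-in/tail-out noncollider and letting the traversal of $\pi$ continue (here $\alpha$ is entered through a head and left through a tail, so it too is a noncollider, and again all interiors avoid $C$). The self-loops at the endpoints repair orientations: if the last $\pi$-edge meets $\beta$ with a tail, appending $\beta \to \beta$ restores the required head. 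Verifying that every collider of the resulting walk lies in $\an(C)$, that no noncollider lies in $C$, and that it ends with a head at $\beta$ then shows that $\alpha$ and $\beta$ are not $\mu$-separated given $C$, for every $C$.

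For the converse I would argue contrapositively: assuming a $\mu$-connecting walk $\omega$ from $\alpha$ to $\beta$ given $C^{*}$ exists, I extract a weak inducing path, so that the absence of one forces $\musep{\alpha}{\beta}{C^{*}}$. First I would show every internal node of $\omega$ lies in $A := \an(\{\alpha,\beta\})$: colliders lie in $\an(C^{*}) \subseteq A$ because $A$ is ancestrally closed, while a noncollider has a tail, and following that tail forward or backward along the forced directed path to the next collider or endpoint places it in $A$ by the principle above. Because $C^{*} = A \setminus \{\alpha,\beta\}$, every internal node other than $\alpha$ or $\beta$ is conditioned on, hence must be a collider, as a conditioned noncollider would block $\omega$. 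Taking $\omega$ of minimal length, I would then delete internal occurrences of the endpoints — truncating the suffix after an internal $\alpha$, or the prefix up to the first internal $\beta$ — which leaves a walk all of whose internal nodes are colliders in $A$ (the final edge mark at $\beta$ is irrelevant, since a weak inducing path imposes no head condition there). Finally, since every edge incident to a collider has a neck at it, excising the cycle between any repeated collider preserves collider status; shortcutting all repetitions thus yields a genuine collider path with internal nodes in $A$, i.e.\ a weak inducing path.

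The main obstacle is the forward construction: turning the blocked colliders ($\gamma \notin \an(C)$) of the inducing path into an honest $\mu$-connecting walk \emph{uniformly} in the conditioning set $C$. The delicate requirements are that no detour introduce a noncollider inside $C$ nor an unactivated collider, and that the walk can always be made to finish with a head at $\beta$; both are met by combining the ``directed-path interiors avoid $C$'' principle (which guarantees the detours toward $\alpha$ or $\beta$ are admissible) with the self-loops at $\alpha$ and $\beta$ (which let one correct orientations and bounce off an endpoint). Carrying out the book-keeping over an arbitrary interleaving of activated colliders, $\an(\alpha)$-colliders and $\an(\beta)$-colliders — and in particular choosing, for each blocked collider, whether to bounce back off $\alpha$ or to terminate at $\beta$ — is where the argument must be executed with care.
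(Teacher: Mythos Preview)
Your converse direction (extracting a weak inducing path from a $\mu$-connecting walk given $C^{*}=\an(\{\alpha,\beta\})\setminus\{\alpha,\beta\}$) is sound, and in fact uses a simpler separating set than the paper's $D(\alpha,\beta)=\{\gamma\in\an(\alpha,\beta):\gamma\text{ collider connected to }\beta\}\setminus\{\alpha,\beta\}$. The argument that every internal node of the walk lies in $A$, that those distinct from $\alpha,\beta$ are therefore colliders, and that shortcutting repetitions preserves collider status is correct (your description of which end to truncate is slightly garbled, but the idea is right).

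The forward direction, however, has a genuine gap. When a blocked collider $\gamma\notin\an(C)$ satisfies $\gamma\in\an(\alpha)$, you ``run a directed path down to $\alpha$ and come back up its reverse'' and claim that ``$\alpha$ is entered through a head and left through a tail''. That is false. The directed path is $\gamma\to\delta_1\to\cdots\to\delta_m\to\alpha$; its reverse traverses the \emph{same} edges backward, so the edge you leave $\alpha$ along is still $\delta_m\to\alpha$, which has a \emph{head} at $\alpha$. Both edges incident to $\alpha$ at the turnaround therefore have necks there, and $\alpha$ is a \emph{collider} on your walk. Since $C\subseteq V\setminus\{\alpha,\beta\}$ is arbitrary, there is no reason for $\alpha\in\an(C)$, so this collider can be blocked. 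Inserting the self-loop does not help: it gives one instance of $\alpha$ a tail-out, but the following instance then has head-in (from the loop) and head-out (up the reverse path), so a collider at $\alpha$ remains.

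The paper circumvents this not by bouncing but by \emph{restarting}: it takes $k$ maximal such that some admissible walk from $\alpha$ to $\gamma_k$ exists with a neck at $\gamma_k$, and when $\gamma_k\notin\an(C)$ with $\gamma_k\in\an(\alpha)$ it \emph{discards} the walk built so far and uses the reversed directed path $\alpha\leftarrow\cdots\leftarrow\gamma_k$ as a fresh walk from $\alpha$. Now $\alpha$ is the starting endpoint rather than an internal collider, and the maximality of $k$ forces $\gamma_{k+1}=\beta$, so a single restart suffices. Your incremental construction can be repaired the same way---replace each $\an(\alpha)$-bounce by a restart---but you must then add the maximality (or an inductive index-advance) argument to guarantee termination; as written, the bounce does not produce a $\mu$-connecting walk.
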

 
\citet{Mogensen2020a} show a similar result in the case of strong inducing 
paths 
in DMGs.

\begin{proof}
	Assume first that there is no weak inducing path between $\alpha$ and 
	$\beta$ 
	in $\D$, and define
	
	$$
	D(\alpha,\beta) = \{ \gamma\in \an(\alpha,\beta) \mid \gamma\text{ and } 
	\beta \text{ are collider connected }   \} 	\setminus 
	\{\alpha,\beta\}.
	$$
	
	\noindent We will show that $\beta$ is $\mu$-separated from $\alpha$ by 
	$D(\alpha,\beta)$. We can assume that $\alpha\neq\beta$ as we 
	have assumed 
	that all nodes have loops. If there is a 
	$\mu$-connecting walk from $\alpha$ to $\beta$ given $C\subseteq V\setminus 
	\{\alpha,\beta\}$, then there is also a $\mu$-connecting walk which is a 
	path composed with a directed edge, $\gamma\rightarrow\beta$. We must have 
	that $\gamma\neq \alpha$, and if $\gamma \neq \beta$ then the walk is 
	closed by $D(\alpha,\beta)$. Assume instead that $\gamma = \beta$. 
	Let $\pi$ 
	denote some path between $\alpha$ and $\beta$. Blunt and 
	directed edges are weak inducing paths (in either direction) so $\pi$ 
	must be of length 2 or more,
	
	$$
	\alpha = \gamma_0 \overset{e_0}{\sim} \gamma_1 \overset{e_1}{\sim} \ldots  
	\overset{e_{j-1}}{\sim} \gamma_j \overset{e_{j}}{\sim} \beta.
	$$
	
	\noindent There must exist $i \in \{0,1,\ldots,j \}$, $j \geq 1$, such that 
	either 
	$\gamma_i$ is not 
	collider connected to $\beta$ along $\pi$ or $\gamma_i \notin 
	\an(\alpha,\beta)$. Let $i_+$ denote the largest such number in 
	$\{0,1,\ldots,j \}$. Assume first that $\gamma_{i_+}$ is not 
	collider connected to 
	$\beta$ along $\pi$. In this case, $i_+ \neq j$. Then $\gamma_{i_+ + 1}$ is 
	a 
	noncollider on $\pi$ and it is in $D(\alpha,\beta)$, and it follows that 
	$\pi$ is not $\mu$-connecting. Note that necessarily $\gamma_{i_+ +1} \neq 
	\alpha,\beta$. On the other hand, assume $\gamma_{i_+}\notin 
	\an(\alpha,\beta)$. Then $i_+ \neq 0$, and there is some collider, 
	$\gamma_k$, on $\pi$, 
	$k\in\{1,...,i_+\}$. We have that $\gamma_k\notin \an(\alpha,\beta)$ and 
	$\pi$ is closed in this collider.
	
	On the other hand, assume that there is a weak inducing path between 
	$\alpha$ and $\beta$ and let $C\subseteq V\setminus \{\alpha,\beta\}$. If 
	$\alpha=\beta$, then $\alpha\rightarrow\beta$ 
	which is connecting given $C$. Assume 
	$\alpha\neq \beta$. If $\alpha$ and $\beta$ are adjacent, then 
	$\alpha\sim\beta\rightarrow\beta$ is $\mu$-connecting given $C\subseteq 
	V\setminus\{\alpha,\beta\}$. Consider the weak inducing path,
	
	$$
	\alpha \sim \gamma_1 \sim \ldots \gamma_j \sim \beta = \gamma_{j+1}.
	$$
	
	\noindent Let $k$ be the maximal number in the set $\{1,\ldots,j\}$ such 
	that there is a walk between $\alpha$ and $\gamma_k$ with all colliders in 
	$\an(C)$, no noncolliders in $C$, and which has a neck at $\gamma_k$. We 
	see that $\gamma_1\neq \beta$ fits this description, i.e., $k$ is 
	well-defined. Let $\omega$ be the walk from $\alpha$ to $\gamma_k$. If 
	$\gamma_k \in \an(C)$, then the composition of $\omega$ with $\gamma_k \sim 
	\gamma_{k+1}$ gives either a new such walk (if the edge is blunt) and 
	by maximality of $k$ we have that $\gamma_{k+1} = \beta$, or if the edge is 
	directed 
	then also $\gamma_{k+1}=\beta$ (the weak inducing path is a 
	collider path), and composing either walk with 
	$\beta\rightarrow 
	\beta$ 
	gives a connecting walk given $C$. Assume instead that $\gamma_k\notin 
	\an(C)$, and 
	consider again $\omega$. There is a directed path from $\gamma_k$ to 
	$\alpha$ or to $\beta$. Let $\bar{\pi}$ denote the subpath from $\gamma_k$ 
	to the 
	first instance of either $\alpha$ or $\beta$. If $\alpha$ occurs first, we 
	compose 
	$\bar{\pi}^{-1}$ with $\gamma_k\sim \gamma_{k+1}$ and argue as 
	in the 
	case of $\gamma_j\in \an(C)$ above. In $\beta$ occurs first, $\omega$ 
	composed 
	with $\bar{\pi}$ is connecting. 
\end{proof}

We say that $\beta$ is {\it inseparable} from $\alpha$ if there is no 
$C\subseteq V\setminus \{\alpha\}$ such that $\beta$ is $\mu$-separated from 
$\alpha$ by $C$.

\begin{exmp}
	\citet{Mogensen2020a} use $\mu$-separation in \emph{directed mixed graphs} 
	(DMGs) to represent local independence models. It is natural to ask if the 
	independence models of cDGs can be represented by DMGs. The answer is no 
	and to show this we consider the cDG in Figure 
	\ref{fig:cDGnoDMG} and ask if there exists a DMG on the same node set which 
	has the same set of $\mu$-separations as the cDG in Figure 
	\ref{fig:cDGnoDMG}. In the cDG, we see that the node 
	$\gamma$ is separable from $\alpha$ and vice versa, i.e., there can be no 
	edge between the two in the DMG. The node $\gamma$ is not separated from 
	$\alpha$ given $\{\beta\}$, and therefore $\beta$ must be a collider on a 
	path 
	between the two. However, then there is a head at $\beta$ on an edge from 
	$\gamma$ 
	and 
	therefore $\beta$ is inseparable from $\gamma$ which is a contradiction. 
	This shows that the independence model of the cDG in Figure 
	\ref{fig:cDGnoDMG} cannot be represented by 
	any DMG on the same node set. It follows that the set of separation models 
	of cDGs on some 
	node set is not in general a subset of the separation models of DMGs on the 
	same node set.

	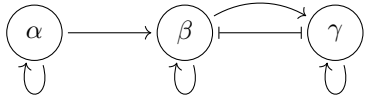
\begin{figure}
		\centering
				\begin{tikzpicture}
				\node[shape=circle,draw=black,style=myCircle] (a) at (0,0) 
				{$\alpha$};
				\node[shape=circle,draw=black,style=myCircle] (b) at (2,0) 
				{$\beta$};
				\node[shape=circle,draw=black,style=myCircle] (c) at (4,0) 
				{$\gamma$};
				
				\path [->] (a) edge [bend left = 0] node {} (b);
				\path [|-|](b) edge [bend right = 0] node {} (c);
				\path [->](b) edge [bend left = 30] node {} (c);
				\draw[every loop/.append style={->}] (b) edge[loop below] 
				node {}  (b);
				\draw[every loop/.append style={->}] (a) edge[loop below] 
				node {}  (a);
				\draw[every loop/.append style={->}] (c) edge[loop below] 
				node {}  (c);
				\end{tikzpicture}
				\caption{A cDG, $\mathcal{D}$, on nodes 
				$V=\{\alpha,\beta,\gamma\}$ such that the separation model 
				$\I(\mathcal{D})$ cannot be represented by a DMG on nodes $V$. 
				See Example 
				\ref{exmp:cDGnoDMG}.}
				\label{fig:cDGnoDMG}
	\end{figure}
	
	\label{exmp:cDGnoDMG}
\end{exmp}

	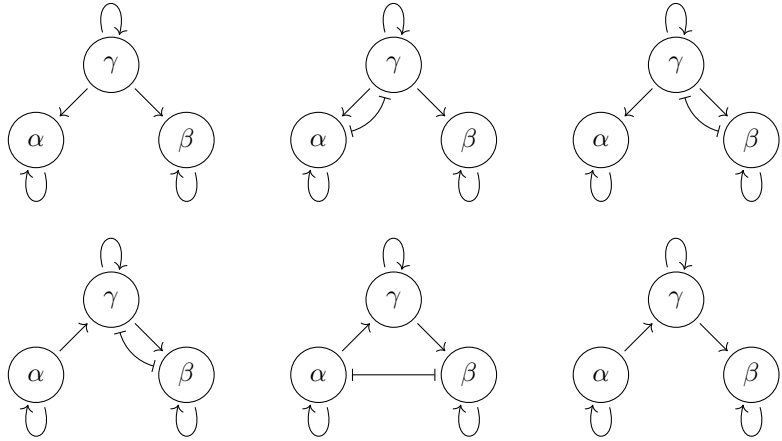
\begin{figure}
		\centering
		\begin{minipage}{0.3\linewidth}
	\begin{tikzpicture}
	\node[shape=circle,draw=black,style=myCircle] (a) at (0,0) {$\alpha$};
	\node[shape=circle,draw=black,style=myCircle] (b) at (2,0) {$\beta$};
	\node[shape=circle,draw=black,style=myCircle] (c) at (1,1) {$\gamma$};
	
	\path [->] (c) edge [bend left = 0] node {} (a);
	\path [<-](b) edge [bend left = 0] node {} (c);
	\draw[every loop/.append style={->}] (b) edge[loop below] 
	node {}  (b);
	\draw[every loop/.append style={->}] (a) edge[loop below] 
	node {}  (a);	
	\draw[every loop/.append style={->}] (c) edge[loop above] 
	node {}  (c);
	\end{tikzpicture}
		\end{minipage}
\begin{minipage}{0.3\linewidth}
	\begin{tikzpicture}
	\node[shape=circle,draw=black] (a) at (0,0) {$\alpha$};
	\node[shape=circle,draw=black] (b) at (2,0) {$\beta$};
	\node[shape=circle,draw=black] (c) at (1,1) {$\gamma$};
	
	\path [|-|] (c) edge [bend left = 30] node {} (a);	
	\path [->] (c) edge [bend left = 0] node {} (a);
	\path [<-](b) edge [bend left = 0] node {} (c);
	\draw[every loop/.append style={->}] (b) edge[loop below] 
	node {}  (b);
	\draw[every loop/.append style={->}] (a) edge[loop below] 
	node {}  (a);	
	\draw[every loop/.append style={->}] (c) edge[loop above] 
	node {}  (c);	
	\end{tikzpicture}
\end{minipage}
\begin{minipage}{0.3\linewidth}
	\begin{tikzpicture}
	\node[shape=circle,draw=black] (a) at (0,0) {$\alpha$};
	\node[shape=circle,draw=black] (b) at (2,0) {$\beta$};
	\node[shape=circle,draw=black] (c) at (1,1) {$\gamma$};
	
	\path [|-|] (c) edge [bend right = 30] node {} (b);
	\path [->] (c) edge [bend left = 0] node {} (a);
	\path [<-](b) edge [bend left = 0] node {} (c);
	\draw[every loop/.append style={->}] (b) edge[loop below] 
	node {}  (b);
	\draw[every loop/.append style={->}] (a) edge[loop below] 
	node {}  (a);	
	\draw[every loop/.append style={->}] (c) edge[loop above] 
	node {}  (c);	
	\end{tikzpicture}
\end{minipage}
		\begin{minipage}{0.3\linewidth}
			\begin{tikzpicture}
			\node[shape=circle,draw=black] (a) at (0,0) {$\alpha$};
			\node[shape=circle,draw=black] (b) at (2,0) {$\beta$};
			\node[shape=circle,draw=black] (c) at (1,1) {$\gamma$};
			
			\path [<-] (c) edge [bend left = 0] node {} (a);
			\path [<-](b) edge [bend left = 0] node {} (c);
			\path [|-|](b) edge [bend left = 30] node {} (c);
			\draw[every loop/.append style={->}] (b) edge[loop below] 
			node {}  (b);
			\draw[every loop/.append style={->}] (a) edge[loop below] 
			node {}  (a);	
			\draw[every loop/.append style={->}] (c) edge[loop above] 
			node {}  (c);
			\end{tikzpicture}
		\end{minipage}
		\begin{minipage}{0.3\linewidth}
			\begin{tikzpicture}
			\node[shape=circle,draw=black] (a) at (0,0) {$\alpha$};
			\node[shape=circle,draw=black] (b) at (2,0) {$\beta$};
			\node[shape=circle,draw=black] (c) at (1,1) {$\gamma$};
			
			\path [|-|] (a) edge [bend left = 0] node {} (b);	
			\path [<-] (c) edge [bend left = 0] node {} (a);
			\path [<-](b) edge [bend left = 0] node {} (c);
			\draw[every loop/.append style={->}] (b) edge[loop below] 
			node {}  (b);
			\draw[every loop/.append style={->}] (a) edge[loop below] 
			node {}  (a);	
			\draw[every loop/.append style={->}] (c) edge[loop above] 
			node {}  (c);	
			\end{tikzpicture}
		\end{minipage}
		\begin{minipage}{0.3\linewidth}
			\begin{tikzpicture}
			\node[shape=circle,draw=black] (a) at (0,0) {$\alpha$};
			\node[shape=circle,draw=black] (b) at (2,0) {$\beta$};
			\node[shape=circle,draw=black] (c) at (1,1) {$\gamma$};
			
			\path [<-] (c) edge [bend left = 0] node {} (a);
			\path [<-](b) edge [bend left = 0] node {} (c);
			\draw[every loop/.append style={->}] (b) edge[loop below] 
			node {}  (b);
			\draw[every loop/.append style={->}] (a) edge[loop below] 
			node {}  (a);	
			\draw[every loop/.append style={->}] (c) edge[loop above] 
			node {}  (c);	
			\end{tikzpicture}
		\end{minipage}

		\caption{First row: an equivalence class illustrating that a greatest 
		element need not exist. Second row: the left and center graphs 
		are Markov equivalent. The graph on the right is the largest graph 
		which is a 
		subgraph of both of them, and this graph is not Markov equivalent, 
		i.e., the Markov equivalence class of the left (and center) graph does 
		not have a least element. Theorem \ref{thm:meChar} gives a 
		characterization of Markov equivalence of cDGs.}
		\label{fig:noMaxNoMinExmp}
	\end{figure}

DGs constitute a subclass of cDGs and within the class 
of DGs every Markov equivalence class is a singleton, i.e., two DGs are Markov 
equivalent if and only if they are equal.

\begin{prop}[Markov equivalence of DGs \cite{Mogensen2020a}]
	Let $\mathcal{D}_1 = (V,E_1)$ and $\mathcal{D}_2 = (V,E_2)$ be DGs. Then 
	$\mathcal{D}_1 \in [\mathcal{D}_2]$ if and only if $\mathcal{D}_1 = 
	\mathcal{D}_2$.  
	\label{prop:indIDsGraph}
\end{prop}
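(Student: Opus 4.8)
The plan is to obtain this statement as an essentially immediate corollary of Proposition \ref{prop:dirEdgeSep}. The reverse implication is trivial: if $\D_1 = \D_2$, then they admit exactly the same walks and hence the same $\mu$-connecting walks relative to any conditioning set, so $\I(\D_1) = \I(\D_2)$ and $\D_1 \in [\D_2]$. All the work is in the forward implication, and I would organize it around the observation that for DGs the directed edges are the only edges, so pinning down the edge set is the same as pinning down the graph.

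For the forward direction, I would first note that every DG is in particular a cDG (it simply has no blunt edges), so Proposition \ref{prop:dirEdgeSep} applies to both $\D_1$ and $\D_2$. That proposition characterizes the presence of a directed edge $\alpha \rightarrow_\D \beta$ purely in terms of whether the single separation statement $\musep{\alpha}{\beta}{V \setminus \{\alpha\}}$ holds in $\D$. The key point is that the truth value of this statement in $\D_i$ is a question about a single element of the independence model, namely whether the triple $(\{\alpha\}, \{\beta\}, V \setminus \{\alpha\})$ lies in $\I(\D_i)$, and is therefore determined entirely by $\I(\D_i)$.

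Now suppose $\D_1 \in [\D_2]$, i.e. $\I(\D_1) = \I(\D_2)$. Then for every ordered pair $(\alpha, \beta)$ the triple $(\{\alpha\}, \{\beta\}, V \setminus \{\alpha\})$ belongs to $\I(\D_1)$ if and only if it belongs to $\I(\D_2)$; equivalently, $\musep{\alpha}{\beta}{V \setminus \{\alpha\}}$ holds in $\D_1$ if and only if it holds in $\D_2$. By Proposition \ref{prop:dirEdgeSep}, this gives $\alpha \rightarrow_{\D_1} \beta \iff \alpha \rightarrow_{\D_2} \beta$ for all $\alpha, \beta \in V$, so $\D_1$ and $\D_2$ have exactly the same directed edges. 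Since both are DGs, every edge of each is directed, and hence $E_1 = E_2$, that is $\D_1 = \D_2$.

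I do not expect any genuine obstacle here: the entire content is carried by Proposition \ref{prop:dirEdgeSep}, and the remaining steps are purely definitional. The only minor bookkeeping point is to check that directed loops $\alpha \rightarrow_\D \alpha$ are also covered, but these fall under the same characterization with $\beta = \alpha$ (the walk $\alpha \rightarrow \alpha$ is $\mu$-connecting given $V \setminus \{\alpha\}$, having a head at its endpoint and no nonendpoint nodes). It is worth emphasizing, for contrast, that this clean uniqueness is special to DGs: once blunt edges are allowed, the single-conditioning-set test no longer pins down all edges, and genuine nonuniqueness of the graph within its equivalence class appears, as illustrated in Figure \ref{fig:noMaxNoMinExmp}.
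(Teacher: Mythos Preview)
Your proof is correct and matches the paper's treatment: the paper does not give an explicit proof of this proposition (it is cited from \cite{Mogensen2020a}), but it derives the closely related Corollary \ref{cor:IDunCorr} as ``an immediate consequence of Proposition \ref{prop:dirEdgeSep}'', which is exactly your argument specialized to the DG case where directed edges are the only edges.
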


Proposition \ref{prop:indIDsGraph} does not 
hold in general when $\mathcal{D}_1$ and $\mathcal{D}_2$ are cDGs. As an 
example, consider a graph on nodes $\{\alpha,\beta\}$ such that 
$\alpha\rightarrow\beta$ and 
	$\beta\rightarrow\alpha$. This graph is Markov equivalent with the graph 
	where $\alpha\unEdge\beta$ is added. The next result is an immediate 
	consequence of Proposition 
\ref{prop:dirEdgeSep} and shows that Markov equivalent cDGs always have the 
same directed edges.

\begin{cor}
	Let $\mathcal{D}_1 = (V,E_1)$ and $\mathcal{D}_2 = (V,E_2)$ be cDGs. If 
	they are Markov equivalent, then for all $\alpha,\beta\in V$ it holds that 
	$\alpha\rightarrow_{\mathcal{D}_1}\beta$ if and only if 
	$\alpha\rightarrow_{\mathcal{D}_2}\beta$.
	\label{cor:IDunCorr}
\end{cor}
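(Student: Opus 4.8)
The plan is to read the corollary as a direct translation of Proposition~\ref{prop:dirEdgeSep} through the definition of Markov equivalence, so the proof should be a short chain of equivalences rather than a fresh graphical argument. First I would unpack the hypothesis: saying that $\D_1$ and $\D_2$ are Markov equivalent means precisely that $\I(\D_1) = \I(\D_2)$, i.e.\ a triple $(A,B,C)$ of subsets of $V$ satisfies $\musep{A}{B}{C}$ in $\D_1$ if and only if it satisfies $\musep{A}{B}{C}$ in $\D_2$. In particular this holds for every triple of the special form $(\{\alpha\},\{\beta\},V\setminus\{\alpha\})$.

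Next, fix $\alpha,\beta\in V$ and apply Proposition~\ref{prop:dirEdgeSep} separately in each graph: $\alpha\rightarrow_{\D_1}\beta$ holds exactly when $\musepG{\alpha}{\beta}{V\setminus\{\alpha\}}{\D_1}$ fails, and likewise $\alpha\rightarrow_{\D_2}\beta$ holds exactly when $\musepG{\alpha}{\beta}{V\setminus\{\alpha\}}{\D_2}$ fails. Since the two separation statements coincide by the previous paragraph, so do their negations, and hence the presence of the directed edge is the same in both graphs. As $\alpha$ and $\beta$ were arbitrary, this yields the claim for all ordered pairs, loops $\alpha\rightarrow\alpha$ included.

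The honest remark is that there is no real obstacle here: the entire content sits in Proposition~\ref{prop:dirEdgeSep}, whose point is that the existence of a directed edge is already encoded in the independence model, being detected by a single canonically chosen separation query with conditioning set $V\setminus\{\alpha\}$. The only thing worth checking is that this characterisation of $\alpha\rightarrow\beta$ is phrased purely in terms of $\mu$-separation -- a relation preserved by Markov equivalence by definition -- and does not covertly refer to the blunt edges of $\D$; once that is noted, equality of the directed-edge sets is forced. This is also why, as the surrounding discussion emphasises, only the blunt edges can differ between Markov equivalent cDGs.
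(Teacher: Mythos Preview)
Your argument is correct and is exactly the route the paper takes: the corollary is stated there as ``an immediate consequence of Proposition~\ref{prop:dirEdgeSep}'' with no further proof, and your write-up just makes that implication explicit by specialising Markov equivalence to the triples $(\{\alpha\},\{\beta\},V\setminus\{\alpha\})$.
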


For graphs $\D_1 = (V,E_1)$ and $\D_2 = (V,E_2)$, we write 
$\D_1\subseteq \D_2$ 
if $E_1\subseteq E_2$. We say that a graph, $\D$, is a {\it greatest} element 
of 
its equivalence class, 
$[\D]$, if it is a supergraph of all members of the class, i.e., $\tilde{\D} 
\subseteq \D$ for all $\tilde{\D} \in [\D]$. We say that $\D$ is a {\it 
least} 
element if $\D \subseteq \tilde{\D}$ for all $\tilde{\D}\in [\D]$. 
\citet{Mogensen2020a} show the below result on Markov equivalence. 

\begin{thm}[Greatest Markov equivalent DMG \cite{Mogensen2020a}]
	Let $\G$ be a directed mixed graph. Then $[\G]$  
	has a greatest 
	element (within the class of DMGs), i.e., there exists $\bar{\G}\in [\G]$ 
	such that $\bar{\G}$ is a 
	supergraph of all Markov equivalent DMGs.
\end{thm}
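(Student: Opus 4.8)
The plan is to reduce the problem to bidirected edges and then exhibit the greatest element explicitly. The first ingredient is that Markov equivalent DMGs carry exactly the same directed edges (the DMG counterpart of Corollary~\ref{cor:IDunCorr}, \cite{Mogensen2020a}): the presence of $\alpha\rightarrow_\G\beta$ is determined by $\I(\G)$, so every $\G'\in[\G]$ has precisely the directed edges of $\G$. Hence the members of $[\G]$ differ only in their bidirected edges, and a greatest element must consist of the common directed edges together with the union of all bidirected edges that occur anywhere in the class. This makes the natural candidate
$$\bar\G = \G + \{\, e : e \text{ is a bidirected edge with } \G + e \in [\G]\,\}.$$
By construction $\G\subseteq\bar\G$ and each edge added on its own preserves Markov equivalence.

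The substantive step is to show $\bar\G\in[\G]$, that is, that all of these edges may be added \emph{simultaneously}. Since $\G\subseteq\bar\G$ we automatically get $\I(\bar\G)\subseteq\I(\G)$, so the content lies in the reverse inclusion: adding the edges must not destroy any $\mu$-separation valid in $\G$. The key tool is the inducing-path characterization of inseparability for DMGs (the counterpart of Proposition~\ref{prop:inSepWIP}, \cite{Mogensen2020a}): a single bidirected edge $\alpha\leftrightarrow\beta$ satisfies $\G+(\alpha\leftrightarrow\beta)\in[\G]$ precisely when $\alpha$ and $\beta$ are already joined in $\G$ by an inducing path carrying heads at both endpoints. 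Given a $\mu$-connecting walk in $\bar\G$ from $\alpha$ to $\beta$ relative to $C$, I would replace each occurrence of a newly added edge by the inducing path it represents, producing a walk in $\G$ with the same endpoints and the same terminal head. The main obstacle is verifying that the rerouted object is still $\mu$-connecting given $C$: a single $\bar\G$-walk may traverse several added edges, and at each stitching point one must check that the inserted colliders still lie in $\an(C)$ and that no inserted noncollider enters $C$. I expect this to go through by selecting, for each added edge, an inducing path whose internal nodes are ancestors of its endpoints (so inserted colliders land in $\an(C)$) and by noting that the head/neck pattern at the endpoints of such an inducing path matches that of the bidirected edge it replaces, so the collider/noncollider status of the junction nodes is unchanged.

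Finally I would verify that $\bar\G$ is greatest. Let $\G'\in[\G]$ be arbitrary; by the first step its directed edges coincide with those of $\bar\G$, so it remains to place every bidirected edge $e=\alpha\leftrightarrow\beta$ of $\G'$ inside $\bar\G$, which by definition amounts to $\G+e\in[\G]$. Because $e\in\G'$, the one-edge walk $\alpha\leftrightarrow\beta$ is $\mu$-connecting from $\alpha$ to $\beta$, and also from $\beta$ to $\alpha$, given every $C\subseteq V\setminus\{\alpha,\beta\}$ (it has no nonendpoint nodes and a head at each end); thus in $\G'$ neither $\musep{\alpha}{\beta}{C}$ nor $\musep{\beta}{\alpha}{C}$ holds for any such $C$. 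These are statements about $\I(\G')=\I(\G)$, so the same head-to-head inseparability holds in $\G$, and the inducing-path characterization supplies an inducing path between $\alpha$ and $\beta$ with heads at both ends in $\G$. By the addability criterion used in the second step this gives $\G+e\in[\G]$, i.e. $e\in\bar\G$. Hence $\G'\subseteq\bar\G$ for every $\G'\in[\G]$, and combined with $\bar\G\in[\G]$ this exhibits $\bar\G$ as the greatest element. (Step~B can alternatively be phrased through a sandwich principle: $\G\subseteq\G\cup\G'$ with $\G\equiv\G'$, so $\I(\G\cup\G')\subseteq\I(\G)$, while the reverse inclusion is again the rerouting lemma applied to the extra edges of $\G'$.)
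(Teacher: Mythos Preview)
The paper does not actually contain a proof of this theorem: it is stated as a result of \cite{Mogensen2020a} and quoted without argument. There is therefore nothing in the present paper to compare your proposal against; the proof lives in the cited reference.

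On the substance of your sketch: the architecture is the right one and matches how this result is proved in \cite{Mogensen2020a} --- fix the directed part, characterize the individually addable bidirected edges via inducing paths with heads at both ends, and then show that the union of all such edges can be added at once. Your Step~4 is essentially complete once the single-edge addability criterion is in hand. The real work, as you correctly flag, is Step~3, and there the argument you give is not yet a proof. The phrase ``I expect this to go through'' hides exactly the nontrivial part: when you replace an added edge $\alpha\leftrightarrow\beta$ by an inducing path whose internal nodes lie in $\an(\{\alpha,\beta\})$, you need those internal colliders to lie in $\an(C)$, and that requires knowing that $\alpha$ or $\beta$ is in $\an(C)$. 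This is not automatic --- for instance if the $\bar\G$-walk is the single edge $\alpha\leftrightarrow\beta$ with $\beta\in C$ and $\alpha\notin C$, you are fine, but if neither endpoint is in $\an(C)$ you must instead reroute through directed paths out of the inducing-path colliders to $\alpha$ or $\beta$ and then compose further, exactly as in the proof of Proposition~\ref{prop:inSepWIP}. Moreover, when several added edges are stitched in sequence, the junction node between two replacement paths may change collider status and you need to handle that case explicitly (compare the maximal-collider-segment bookkeeping in the proof of Theorem~\ref{thm:meChar}). None of this is fatal --- the approach is correct --- but the rerouting lemma is where the content is, and in your write-up it is asserted rather than shown.
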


The theorem provides a concise and intuitive way to understand sets of Markov 
equivalent DMGs. If $\G$ is a DMG, then we can visualize $[\G]$ by drawing its 
greatest element and simply showing which edges are in every DMG in $[\G]$ 
and which are only in some DMGs in $[\G]$. cDGs 
represent local independencies allowing for correlation in the driving error 
processes 
and one can ask if the same result on Markov equivalence holds in 
this class of graphs. The answer is in the negative as illustrated by the 
following example.

\begin{exmp}
	Consider the graph to the left on the first row of Figure 
	\ref{fig:noMaxNoMinExmp}. The 
	edge $\alpha \unEdge \gamma$ can be added Markov 
	equivalently and the edge $\beta\unEdge\gamma$ can be added Markov 
	equivalently 
	(center and right graphs), 
	but they cannot both be added Markov equivalently at the same time. This 
	shows that the equivalence class of this graph does not contain a greatest 
	element. Figure \ref{fig:noMaxNoMinExmp} also gives an example showing that 
	an equivalence class of cDGs does not 
	necessarily contain a least element.
	\label{exmp:noMax}
\end{exmp}

\subsection{A characterization of Markov equivalence of cDGs}

When we have a global Markov property, such as the one in Theorem 
\ref{thm:globalMarkov}, the $\mu$-separations of a cDG imply local 
independencies in the distribution of the stochastic process. We saw in Figure 
\ref{fig:noMaxNoMinExmp} that different cDGs may represent the same 
$\mu$-separations 
and it is therefore important to understand which cDGs are equivalent in terms 
of the $\mu$-separations that they entail, that is, are Markov equivalent. The 
central result of this section is a characterization 
of Markov equivalence of cDGs. We define {\it 
collider equivalence} of graphs as a first step in stating this result.

\begin{defn}
	Let $\D_1 = (V,E_1)$, $\D_2 = (V,E_2)$ be cDGs with the same directed 
	edges, and let $\omega$ be a 
	(nontrivial) collider path in $\D_1$,
	
	$$
	\alpha \sim \gamma_1 \sim \ldots \sim \gamma_{k_1} \sim \beta.
	$$
	
	\noindent We say that $\omega$ is \emph{covered} in $\D_2$ if there exists  
	a (nontrivial) collider path in $\D_2$
	
	$$
	\alpha \sim \bar{\gamma}_1 \sim \ldots \sim \bar{\gamma}_{k_2} \sim \beta
	$$
	
	\noindent such that for each $\bar{\gamma}_j$ we have
	$\bar{\gamma}_j \in 
	\an(\alpha,\beta)$ or $\bar{\gamma}_j \in \cup_i \an(\gamma_i)$.
	\label{def:collCover}
\end{defn}

In the above definition $\{\gamma_j \}$ and $\{\bar{\gamma}_j \}$ 
may be the empty set, corresponding to $\alpha$ and $\beta$ being adjacent, 
$\alpha\sim \beta$.  One should also note that a single edge, 
$\alpha \sim \beta$, constitutes a collider path between $\alpha$ and $\beta$ 
(when $\alpha\neq \beta$) 
and that a single edge covers any collider path between $\alpha$ and $\beta$ as 
it has no nonendpoint 
nodes. When $\D_1$ and $\D_2$ have the same directed edges it 
holds that $\an_{\D_1}(C) = \an_{\D_2}(C)$ for all $C \subseteq V$ and 
therefore one can read off the ancestry of $\alpha$, $\beta$, and 
$\{\gamma_i\}$ in either of the graphs in the above definition.

\begin{defn}[Collider equivalence]
	Let $\D_1$ and $\D_2$ be cDGs on the same node set and with the same 
	directed edges. We say that $\D_1$ and 
	$\D_2$ are \emph{collider equivalent} if every collider path in $\D_1$ is 
	covered in $\D_2$ and every collider path in $\D_2$ is covered in $\D_1$.
	\label{def:collEq}
\end{defn}

In the context of collider equivalence, it is 
important to use the convention that every 
node is an ancestor of itself, i.e., $\gamma \in \an(\gamma)$ for all $\gamma 
\in V$. 
Otherwise, a graph would not necessarily be collider equivalent 
with itself. Using this convention, it follows immediately that every 
cDG 
is collider equivalent with itself.

We do not need to consider walks in the above definitions (only paths) as we 
assume that all loops are included and therefore all nodes are 
collider connected to themselves by assumption. If there is a collider walk 
between $\alpha$ and $\beta$ ($\alpha\neq\beta$), then there is also a 
collider path. Furthermore, if a collider walk between $\alpha$ and $\beta$ 
($\alpha\neq \beta$) in $\D_1$ is covered by a collider walk in $\D_2$, 
then it is also covered by a collider path, and we see that one would obtain an 
equivalent definition by using collider walks instead of collider paths in 
Definitions \ref{def:collCover} and \ref{def:collEq}.

\begin{rem}
	Collider equivalence implies that two graphs have the same weak inducing 
	paths in the following sense. Assume $\omega$ is a weak inducing path 
	between 
	$\alpha$ 
	and $\beta$ in $\D_1$, and that $\D_1$ and $\D_2$ are collider equivalent 
	and have the same directed edges. 
	In $\D_2$, there exists a collider path, 
	$\bar{\omega}$, such that every nonendpoint node is an ancestor of a node 
	on $\omega$, i.e., an ancestor of $\{\alpha,\beta\}$ using the fact that 
	$\omega$ 
	is a weak inducing path. This means that $\bar{\omega}$ is a weak inducing 
	path in $\D_2$.
\end{rem}

\begin{lem}
	Let $\D_1=(V,E_1)$, $\D_2=(V,E_2)$ be cDGs that contain every loop. If 
	$\D_1$ and $\D_2$ are not collider equivalent, then 
	they are not Markov 
	equivalent.
	\label{lem:notCEthenNotME}
\end{lem}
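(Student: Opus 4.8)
The plan is to produce a single $\mu$-separation that holds in one graph but not the other. First I would dispose of the case where $\D_1$ and $\D_2$ do not have the same directed edges: then they are not Markov equivalent by Corollary \ref{cor:IDunCorr}, and we are done. So assume they share their directed edges; in particular $\an_{\D_1} = \an_{\D_2}$, and I write $\an$ for either. Since the graphs are not collider equivalent, by symmetry I may assume there is a collider path $\pi : \alpha \sim \gamma_1 \sim \cdots \sim \gamma_k \sim \beta$ in $\D_1$ that is not covered in $\D_2$ (Definition \ref{def:collCover}). Set $S = \{\alpha,\beta,\gamma_1,\dots,\gamma_k\}$ and $C = \an(S) \setminus \{\alpha,\beta\}$; I will show that $\musepG{\alpha}{\beta}{C}{\D_2}$ holds while $\musepG{\alpha}{\beta}{C}{\D_1}$ fails, so that $(\{\alpha\},\{\beta\},C) \in \I(\D_2) \setminus \I(\D_1)$.

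For the $\D_1$ side I would open $\pi$ by appending the loop at $\beta$. Each $\gamma_i \in \an(\gamma_i) \subseteq \an(S)$ and $\gamma_i \neq \alpha,\beta$, so $\gamma_i \in C$; the walk $\alpha \sim \gamma_1 \sim \cdots \sim \gamma_k \sim \beta \rightarrow \beta$ then has all its colliders (the $\gamma_i$) in $C \subseteq \an(C)$, its only noncollider is the interior $\beta \notin C$, and its last edge has a head at $\beta$, while $\alpha \notin C$. Hence it is $\mu$-connecting, and $\alpha$ is not $\mu$-separated from $\beta$ given $C$ in $\D_1$.

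The substantive part is the $\D_2$ side, where I must rule out every $\mu$-connecting walk given $C$. Suppose $\omega$ is one, from $\alpha$ to $\beta$. The first step is to argue that every node on $\omega$ lies in $\an(S)$: colliders lie in $\an(C) \subseteq \an(\an(S)) = \an(S)$ automatically, and if a noncollider $\delta \notin \an(S)$ occurred, then one of its incident edges would be directed out of $\delta$, and following such tails along $\omega$ would produce a directed chain of noncollider nodes, none in $\an(S)$, that must reach the endpoint $\alpha$ or $\beta$, forcing its predecessor into $\an(\{\alpha,\beta\}) \subseteq \an(S)$ --- a contradiction. Consequently every interior noncollider of $\omega$, being in $\an(S)$ but not in $C = \an(S)\setminus\{\alpha,\beta\}$, equals $\alpha$ or $\beta$. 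Taking $\omega$ of minimal length, I would then remove repetitions: an interior occurrence of $\alpha$ lets me truncate $\omega$ to its suffix, an interior collider occurring twice can be spliced out (its collider status is preserved because both incident edges still have necks at it), and an interior $\beta$ can be eliminated by truncating, or, when the offending edge is blunt, by re-closing with the loop at $\beta$; minimality rules all of these out except a possible trailing loop. Dropping that loop leaves a collider path in $\D_2$ from $\alpha$ to $\beta$ whose interior nodes are colliders in $\an(S) = \an(\alpha,\beta) \cup \bigcup_i \an(\gamma_i)$, i.e.\ exactly a covering of $\pi$ --- contradicting the choice of $\pi$. Therefore $\musepG{\alpha}{\beta}{C}{\D_2}$ holds, $\I(\D_1) \neq \I(\D_2)$, and the graphs are not Markov equivalent.

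I expect the main obstacle to be precisely this reduction of an arbitrary $\mu$-connecting walk in $\D_2$ to a collider path with controlled ancestry. The ancestrally closed choice $C = \an(S)\setminus\{\alpha,\beta\}$ is what makes it work: it simultaneously activates the colliders of $\pi$ in $\D_1$ and forces every connecting walk in $\D_2$ to stay inside $\an(S)$ and to have its interior noncolliders collapse onto the endpoints. The bookkeeping in the minimal-walk splicing argument --- especially the loop-closure corner cases at $\beta$, where a blunt final edge has a neck but no head --- is where I would need to be most careful, but no step there should require more than the definitions of collider, neck, and $\mu$-connection together with the standing assumption that every loop is present.
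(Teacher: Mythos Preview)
Your approach coincides with the paper's: the same conditioning set $C=\an(S)\setminus\{\alpha,\beta\}$ (with the roles of $\D_1,\D_2$ swapped) and the same separating triple. The paper handles the $\D_2$-side by first invoking the standard reduction of a $\mu$-connecting walk to a walk of the form ``path from $\alpha$ to $\beta$ composed with $\beta\rightarrow\beta$'' and then doing a short case split on that path (collider path versus path containing a noncollider); you instead first prove that every node of $\omega$ lies in $\an(S)$ and then splice $\omega$ down by hand. Both routes lead to a covering collider path in $\D_2$ and hence the desired contradiction.

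One case is missing from your splicing step. When an interior $\beta$ has its \emph{preceding} edge directed out of $\beta$, i.e.\ $\gamma_{i-1}\leftarrow\beta$ with a tail (not a head, not a stump) at $\beta$, neither your ``truncate'' clause nor your ``re-close with the loop (for blunt)'' clause applies as written. The fix is immediate: the same loop trick works here too, since in the prefix followed by $\beta\rightarrow\beta$ the interior $\beta$ has tails on both sides and is an open noncollider; this is strictly shorter unless $i=n$, which is exactly your ``possible trailing loop''. Alternatively, once you know that every node of $\omega$ lies in $\an(S)$ you can drop the head-at-$\beta$ constraint and reduce to an $m$-connecting \emph{path} directly: every interior node of such a path is distinct from $\alpha,\beta$, hence lies in $C$, hence must be a collider in $\an(S)$, and you obtain the covering collider path with no $\beta$-bookkeeping at all. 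This is in effect the shortcut the paper takes.
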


\begin{proof}
	Assume that $\D_1$ and $\D_2$ are not collider equivalent. If $\D_1$ and 
	$\D_2$ do not have the same directed edges, then they are not Markov 
	equivalent (Corollary \ref{cor:IDunCorr}), and we can therefore assume that 
	the directed edges are the same. Assume 
	that there exists $\alpha,\beta\in V$ such that there is a collider path 
	between $\alpha$ and $\beta$ in $\D_2$,
	
	$$
	\alpha \sim \bar{\gamma}_1 \sim \ldots \sim \bar{\gamma}_k \sim \beta
	$$
	
	\noindent which is not covered in $\D_1$ (both 
	graphs 
	contain all loops, so $\alpha\neq \beta$). This means that on every 
	collider 
	path between $\alpha$ and $\beta$ in $\D_1$, there exists a collider 
	$\gamma$ such that $\gamma\notin \an(\alpha,\beta)$ and $\gamma\notin 
	\cup_j 
	\an(\bar{\gamma}_j)$. Now consider the set $D = \an(\alpha,\beta) \cup 
	\left[\cup_j 
	\an(\bar{\gamma}_j)\right] \setminus \{\alpha,\beta\}$. Note that $\beta$ 
	is not 
	$\mu$-separated from $\alpha$ given $D$ in $\D_2$ as 
	$\beta\rightarrow_{\D_2}\beta$, and we will argue that 
	$\beta$ is $\mu$-separated from $\alpha$ given $D$ in $\D_1$ 
	showing that these graphs are not Markov equivalent. Consider any walk 
	between $\alpha$ and $\beta$ in $\D_1$. It suffices to consider a path, 
	$\pi$,  
	between $\alpha$ and $\beta$ composed 
	with the edge 
	$\beta\rightarrow\beta$ (as $\beta\notin D$). Assume first that $\pi$ is a 
	collider 
	path. If it is open, then every 
	nonendpoint node is an 
	ancestor of 
	$\alpha$, $\beta$, or $\bar{\gamma}_j$ for some $j$, which is a 
	contradiction. Assume 
	instead that there exists a noncollider (different from $\alpha$ and 
	$\beta$) on the path. There must also exist 
	a collider (otherwise $\pi$ is closed), and the collider is a descendant of 
	the noncollider. The collider is either closed, or it is an ancestor of 
	either $\{\alpha,\beta\}$ or of $\cup_i \bar{\gamma}_i$. In the latter 
	case, the path is closed in the noncollider.
\end{proof}

\begin{figure}
	\centering
	\begin{minipage}{0.3\linewidth}
		\begin{tikzpicture}
		\node[shape=circle,draw=black] (a) at (0,2) {$\alpha$};
		\node[shape=circle,draw=black] (b) at (2,2) {$\beta$};
		\node[shape=circle,draw=black] (c) at (0,0) {$\gamma$};
		\node[shape=circle,draw=black] (d) at (2,0) {$\delta$};
		
		\path [->] (a) edge [bend left = 10] node {} (b);
		\path [->] (b) edge [bend left = 10] node {} (d);
		\path [<-] (d) edge [bend left = 10] node {} (c);
		\path [->] (a) edge [bend right = 10] node {} (c);
		\path [|-|](b) edge [bend right = 10] node {} (d);
		\path [|-|](d) edge [bend right = 10] node {} (c);
		\draw[every loop/.append style={->}] (b) edge[loop above] 
		node {}  (b);
		\draw[every loop/.append style={->}] (a) edge[loop above] 
		node {}  (a);
		\draw[every loop/.append style={->}] (c) edge[loop below] 
		node {}  (c);
		\draw[every loop/.append style={->}] (d) edge[loop below] 
		node {}  (d);	
		\end{tikzpicture}
	\end{minipage}
	\begin{minipage}{0.3\linewidth}
		\begin{tikzpicture}
		\node[shape=circle,draw=black] (a) at (0,2) {$\alpha$};
		\node[shape=circle,draw=black] (b) at (2,2) {$\beta$};
		\node[shape=circle,draw=black] (c) at (0,0) {$\gamma$};
		\node[shape=circle,draw=black] (d) at (2,0) {$\delta$};
		
		\path [->] (a) edge [bend left = 10] node {} (b);
		\path [->] (b) edge [bend left = 10] node {} (d);
		\path [<-] (d) edge [bend left = 10] node {} (c);
		\path [->] (a) edge [bend right = 10] node {} (c);
		\path [|-|](b) edge [bend right = 10] node {} (d);
		\draw[every loop/.append style={->}] (b) edge[loop above] 
		node {}  (b);
		\draw[every loop/.append style={->}] (a) edge[loop above] 
		node {}  (a);
		\draw[every loop/.append style={->}] (c) edge[loop below] 
		node {}  (c);
		\draw[every loop/.append style={->}] (d) edge[loop below] 
		node {}  (d);	
		\end{tikzpicture}
	\end{minipage}
	\begin{minipage}{0.3\linewidth}
		\begin{tikzpicture}
		\node[shape=circle,draw=black] (a) at (0,2) {$\alpha$};
		\node[shape=circle,draw=black] (b) at (2,2) {$\beta$};
		\node[shape=circle,draw=black] (c) at (0,0) {$\gamma$};
		\node[shape=circle,draw=black] (d) at (2,0) {$\delta$};
		
		\path [->] (a) edge [bend left = 10] node {} (b);
		\path [->] (b) edge [bend left = 10] node {} (d);
		\path [<-] (d) edge [bend left = 10] node {} (c);
		\path [->] (a) edge [bend right = 10] node {} (c);
		\path [|-|](d) edge [bend right = 10] node {} (c);
		\draw[every loop/.append style={->}] (b) edge[loop above] 
		node {}  (b);
		\draw[every loop/.append style={->}] (a) edge[loop above] 
		node {}  (a);
		\draw[every loop/.append style={->}] (c) edge[loop below] 
		node {}  (c);
		\draw[every loop/.append style={->}] (d) edge[loop below] 
		node {}  (d);	
		\end{tikzpicture}
	\end{minipage}
	\begin{minipage}{0.3\linewidth}
		\begin{tikzpicture}
		\node[shape=circle,draw=black] (a) at (0,2) {$\alpha$};
		\node[shape=circle,draw=black] (b) at (2,2) {$\beta$};
		\node[shape=circle,draw=black] (c) at (0,0) {$\gamma$};
		\node[shape=circle,draw=black] (d) at (2,0) {$\delta$};
		
		\path [|-|] (a) edge [bend left = 10] node {} (b);
		\path [|-|] (b) edge [bend left = 10] node {} (d);
		\path [|-|] (d) edge [bend left = 10] node {} (c);
		\path [|-|] (a) edge [bend right = 10] node {} (c);
		\path [->](b) edge [bend right = 10] node {} (c);
		\path [<-](b) edge [bend left = 10] node {} (c);
		\draw[every loop/.append style={->}] (b) edge[loop above] 
		node {}  (b);
		\draw[every loop/.append style={->}] (a) edge[loop above] 
		node {}  (a);
		\draw[every loop/.append style={->}] (c) edge[loop below] 
		node {}  (c);
		\draw[every loop/.append style={->}] (d) edge[loop below] 
		node {}  (d);	
		\end{tikzpicture}
	\end{minipage}
	\begin{minipage}{0.3\linewidth}
		\begin{tikzpicture}
		\node[shape=circle,draw=black] (a) at (0,2) {$\alpha$};
		\node[shape=circle,draw=black] (b) at (2,2) {$\beta$};
		\node[shape=circle,draw=black] (c) at (0,0) {$\gamma$};
		\node[shape=circle,draw=black] (d) at (2,0) {$\delta$};
		
		\path [|-|] (a) edge [bend left = 10] node {} (b);
		\path [|-|] (d) edge [bend left = 10] node {} (c);
		\path [|-|] (a) edge [bend right = 10] node {} (c);
		\path [->](b) edge [bend right = 10] node {} (c);
		\path [<-](b) edge [bend left = 10] node {} (c);
		\draw[every loop/.append style={->}] (b) edge[loop above] 
		node {}  (b);
		\draw[every loop/.append style={->}] (a) edge[loop above] 
		node {}  (a);
		\draw[every loop/.append style={->}] (c) edge[loop below] 
		node {}  (c);
		\draw[every loop/.append style={->}] (d) edge[loop below] 
		node {}  (d);	
		\end{tikzpicture}
	\end{minipage}
	\begin{minipage}{0.3\linewidth}
		\begin{tikzpicture}
		\node[shape=circle,draw=black] (a) at (0,2) {$\alpha$};
		\node[shape=circle,draw=black] (b) at (2,2) {$\beta$};
		\node[shape=circle,draw=black] (c) at (0,0) {$\gamma$};
		\node[shape=circle,draw=black] (d) at (2,0) {$\delta$};
		
		\path [|-|] (d) edge [bend left = 10] node {} (c);
		\path [|-|] (a) edge [bend right = 10] node {} (c);
		\path [->](b) edge [bend right = 10] node {} (c);
		\path [<-](b) edge [bend left = 10] node {} (c);
		\draw[every loop/.append style={->}] (b) edge[loop above] 
		node {}  (b);
		\draw[every loop/.append style={->}] (a) edge[loop above] 
		node {}  (a);
		\draw[every loop/.append style={->}] (c) edge[loop below] 
		node {}  (c);
		\draw[every loop/.append style={->}] (d) edge[loop below] 
		node {}  (d);	
		\end{tikzpicture}
	\end{minipage}
	\begin{minipage}{0.3\linewidth}
		\begin{tikzpicture}
		\node[shape=circle,draw=black] (a) at (0,2) {$\alpha$};
		\node[shape=circle,draw=black] (b) at (2,2) {$\beta$};
		\node[shape=circle,draw=black] (c) at (0,0) {$\gamma$};
		\node[shape=circle,draw=black] (d) at (2,0) {$\delta$};
		
		\path [|-|] (a) edge [bend left = 10] node {} (b);
		\path [|-|] (b) edge [bend left = 10] node {} (d);
		\path [|-|] (d) edge [bend left = 10] node {} (c);
		\path [|-|] (a) edge [bend right = 10] node {} (c);
		\path [->](b) edge [bend right = 10] node {} (c);
		\draw[every loop/.append style={->}] (b) edge[loop above] 
		node {}  (b);
		\draw[every loop/.append style={->}] (a) edge[loop above] 
		node {}  (a);
		\draw[every loop/.append style={->}] (c) edge[loop below] 
		node {}  (c);
		\draw[every loop/.append style={->}] (d) edge[loop below] 
		node {}  (d);	
		\end{tikzpicture}
	\end{minipage}
	\begin{minipage}{0.3\linewidth}
		\begin{tikzpicture}
		\node[shape=circle,draw=black] (a) at (0,2) {$\alpha$};
		\node[shape=circle,draw=black] (b) at (2,2) {$\beta$};
		\node[shape=circle,draw=black] (c) at (0,0) {$\gamma$};
		\node[shape=circle,draw=black] (d) at (2,0) {$\delta$};
		
		\path [|-|] (a) edge [bend left = 10] node {} (b);
		\path [|-|] (d) edge [bend left = 10] node {} (c);
		\path [|-|] (a) edge [bend right = 10] node {} (c);
		\path [->](b) edge [bend right = 10] node {} (c);
		\draw[every loop/.append style={->}] (b) edge[loop above] 
		node {}  (b);
		\draw[every loop/.append style={->}] (a) edge[loop above] 
		node {}  (a);
		\draw[every loop/.append style={->}] (c) edge[loop below] 
		node {}  (c);
		\draw[every loop/.append style={->}] (d) edge[loop below] 
		node {}  (d);	
		\end{tikzpicture}
	\end{minipage}
	\begin{minipage}{0.3\linewidth}
		\begin{tikzpicture}
		\node[shape=circle,draw=black] (a) at (0,2) {$\alpha$};
		\node[shape=circle,draw=black] (b) at (2,2) {$\beta$};
		\node[shape=circle,draw=black] (c) at (0,0) {$\gamma$};
		\node[shape=circle,draw=black] (d) at (2,0) {$\delta$};
		
		\path [|-|] (d) edge [bend left = 10] node {} (c);
		\path [->](b) edge [bend right = 10] node {} (c);
		\path [|-|](b) edge [bend right = 10] node {} (a);
		\path [|-|](b) edge [bend left = 10] node {} (d);
		\draw[every loop/.append style={->}] (b) edge[loop above] 
		node {}  (b);
		\draw[every loop/.append style={->}] (a) edge[loop above] 
		node {}  (a);
		\draw[every loop/.append style={->}] (c) edge[loop below] 
		node {}  (c);
		\draw[every loop/.append style={->}] (d) edge[loop below] 
		node {}  (d);	
		\end{tikzpicture}
	\end{minipage}
	\caption{Markov equivalence in cDGs. First row: these are three 
		members of a 
		Markov equivalence class of size 21. The only restriction on  $2^5$ 
		combinations of blunt edges (all but $\beta\unEdge\gamma$ can be 
		present) is the fact
		that we cannot have both $\alpha \unEdge \beta$ and $\alpha \unEdge 
		\gamma$ present and that either $(\alpha, \delta)$, $(\beta,\delta)$, 
		or 
		$(\gamma,\delta)$ are spouses as otherwise there would not be a weak 
		inducing 
		path between 
		$\alpha$ and $\delta$. Second row: 
		these graphs are Markov equivalent. 
		The collider path $\alpha \unEdge \beta \unEdge \delta$ in the first 
		graph 
		is 
		`covered'  in the two others by the walk $\alpha \unEdge \gamma \unEdge 
		\delta$ as 
		$\gamma\in\an(\beta)$. The edge $\beta\unEdge\delta$ is `covered' by 
		the 
		inducing 
		path $\delta \unEdge \gamma \leftarrow \beta$ in the center and right 
		graphs 
		of 
		the row. The equivalence class of these graphs has cardinality 16 which 
		is 
		every combination of blunt edges ($2^5$, excluding 
		$\alpha\unEdge\delta$ which cannot be in a Markov equivalent graph) 
		that makes the graph 
		connected via blunt edges. Third row: the first graph is not collider 
		equivalent with 
		the 
		following two: the collider path $\alpha \unEdge \beta \unEdge \delta$ 
		is 
		not covered 
		by any collider path in the second graph. The collider path $\alpha 
		\unEdge 
		\gamma$ is not covered by any collider path in the third.}
	\label{fig:meExmps1}
\end{figure}
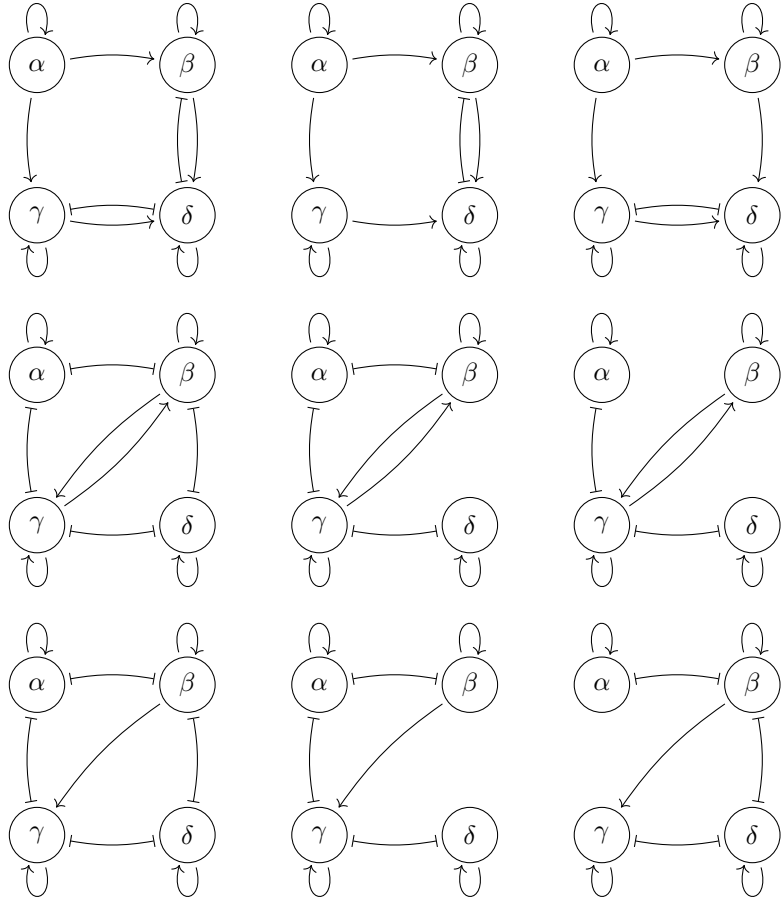

\begin{prop}
	Assume $\alpha,\beta\notin C$. If $\omega$ is a collider path between 
	$\alpha$ 
	and 
	$\beta$ such that every 
	collider is in $\an(\{\alpha,\beta\}\cup C)$, 
	then there is a walk between $\alpha$ and $\beta$ such that no noncollider 
	is in $C$ and every collider is in $\an(C)$.
	\label{prop:mConnChangeAn}
\end{prop}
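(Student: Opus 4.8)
The plan is to start from the given collider path, written as $\alpha = \gamma_0 \sim \gamma_1 \sim \cdots \sim \gamma_k \sim \gamma_{k+1} = \beta$, where the nonendpoint nodes $\gamma_1, \ldots, \gamma_k$ are exactly the colliders, and each lies in $\an(\{\alpha,\beta\}\cup C) = \an(\alpha) \cup \an(\beta) \cup \an(C)$. I call a collider \emph{good} if $\gamma_i \in \an(C)$ and \emph{bad} otherwise; every bad collider therefore lies in $\an(\alpha) \cup \an(\beta)$. The target walk is obtained by keeping the good colliders in place and rerouting around the bad ones through $\alpha$ or $\beta$ along directed paths, which turns each bad collider into a noncollider.

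The key observation I would establish first is a ``no entry into $C$'' lemma for the reroutes: if $\gamma \in \an(\alpha) \setminus \an(C)$ and $\gamma \to \cdots \to \alpha$ is a directed path, then no node on this path lies in $C$. Indeed, every node $\delta$ on the path satisfies $\gamma \in \an(\delta)$, so $\delta \in C$ would give $\gamma \in \an(\delta) \subseteq \an(C)$, contradicting $\gamma \notin \an(C)$; the same holds with $\beta$ in place of $\alpha$. Moreover, every internal node of a directed path is a noncollider (it carries one incoming and one outgoing arrowhead), and by the lemma these internal nodes are not in $C$.

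Next I would make the explicit construction. Let $i^*$ be the largest index of a bad collider lying in $\an(\alpha)$, and let $j^*$ be the smallest index exceeding $i^*$ of a bad collider; by maximality of $i^*$ this $\gamma_{j^*}$ cannot lie in $\an(\alpha)$ and hence lies in $\an(\beta)$. Fixing witnessing directed paths $\gamma_{i^*} \to \cdots \to \alpha$ and $\gamma_{j^*} \to \cdots \to \beta$, I form the walk
$$\alpha \leftarrow \cdots \leftarrow \gamma_{i^*} \sim \gamma_{i^*+1} \sim \cdots \sim \gamma_{j^*} \to \cdots \to \beta,$$
that is, the reversed first directed path, followed by the subpath of $\omega$ from $\gamma_{i^*}$ to $\gamma_{j^*}$, followed by the second directed path. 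Degenerate cases are handled by setting $i^* = 0$ (no bad collider in $\an(\alpha)$; start directly at $\alpha$), $j^* = k+1$ (no bad collider after $i^*$; end directly at $\beta$), or by keeping $\omega$ unchanged when there are no bad colliders at all.

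Finally I would verify the two required properties position by position, which is legitimate because collider/noncollider status is a local property of each instance on the walk, so repeated nodes cause no difficulty. The internal nodes of the two directed paths are noncolliders outside $C$ by the lemma. At $\gamma_{i^*}$ the incident edges are the outgoing directed edge (a tail) and the edge of $\omega$ toward $\gamma_{i^*+1}$ (a neck, since $\gamma_{i^*}$ was a collider in $\omega$), so $\gamma_{i^*}$ becomes a noncollider, and it lies outside $C$ because it is bad; symmetrically for $\gamma_{j^*}$. The retained colliders $\gamma_{i^*+1}, \ldots, \gamma_{j^*-1}$ keep their incident edges from $\omega$ and hence remain colliders, and by the extremal choice of $i^*$ and $j^*$ none of them is bad, so each lies in $\an(C)$. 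Thus no noncollider of the new walk is in $C$ and every collider is in $\an(C)$. The main obstacle is precisely the bookkeeping in this last step: one must argue that the maximal and minimal choices of $i^*$ and $j^*$ leave only good colliders on the retained segment, while the ``no entry'' lemma keeps every newly created noncollider out of $C$.
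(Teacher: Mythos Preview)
Your argument is correct. The extremal choice of $i^*$ and $j^*$ does exactly what you claim: every retained nonendpoint node $\gamma_m$ with $i^* < m < j^*$ is good by minimality of $j^*$, while $\gamma_{i^*}$ and $\gamma_{j^*}$ become noncolliders with tails from the appended directed paths and are kept out of $C$ by your ``no entry'' lemma. The degenerate conventions $i^*=0$ and $j^*=k+1$ are also handled cleanly.

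The paper takes a genuinely different route. Instead of constructing the walk by hand, it augments the graph $\D$ to $\D^+$ by adding a directed edge $\gamma \to \alpha$ for every $\gamma \in C$. In $\D^+$ the given collider path becomes a \emph{weak inducing path} between $\alpha$ and $\beta$, since $\an_{\D^+}(\alpha)$ now absorbs all of $\an_{\D}(C)$. Proposition~\ref{prop:inSepWIP} then furnishes a $\mu$-connecting walk from $\alpha$ to $\beta$ given $C$ in $\D^+$, which automatically has no noncollider in $C$ and every collider in $\an_{\D^+}(C)=\an_{\D}(C)$. Finally one observes that no added edge can appear on this walk (each has a tail at a node of $C$, which would force a noncollider in $C$), so the walk already lives in $\D$. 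Your proof is more elementary and self-contained---it requires nothing beyond the definitions and works by explicit surgery on the path---whereas the paper's argument is shorter once Proposition~\ref{prop:inSepWIP} is in place and illustrates a reusable trick (collapsing $\an(C)$ into $\an(\alpha)$ via auxiliary edges) that also underlies Richardson's analogous result for $m$-separation.
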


A more general result was shown by \citet{richardson2003} in the 
case of $m$-separation (Definition \ref{def:mSep} in Appendix 
\ref{app:proofMarkov}) in directed mixed graphs using a similar proof.

\begin{proof}
	In the original graph, $\D$, we add directed edges such that every node 
	in $C$ is a parent of $\alpha$. Now the path is a weak inducing path, in 
	this 
	larger graph $\D^+$. Using Proposition \ref{prop:inSepWIP}, we can find a 
	$\mu$-connecting walk from $\alpha$ to $\beta$ given $C$ in $\D^+$, and 
	therefore a walk between $\alpha$ and $\beta$ such that every noncollider 
	is 
	not in $C$ and every collider is in $\an(C)$. This walk is also in $\D$ as 
	it 
	cannot contain an edge with a tail at $\gamma\in C$. In $\D$, we see that 
	every collider is still in $\an(C)$ and the result follows.
\end{proof}

\begin{thm}[Markov equivalence of cDGs]
	Let $\D_1 = (V,E_1)$ and $\D_2 = (V,E_2)$ be cDGs that contain every loop. 
	The 
	graphs 
	$\D_1$ and $\D_2$ are Markov equivalent if and only if they have the same 
	directed edges and are 
	collider equivalent.
	\label{thm:meChar}
\end{thm}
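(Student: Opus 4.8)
The equivalence splits into an easy direction and the substantive one. If $\D_1$ and $\D_2$ are Markov equivalent then they have the same directed edges by Corollary~\ref{cor:IDunCorr}, and they are collider equivalent by the contrapositive of Lemma~\ref{lem:notCEthenNotME}; so the forward implication is already in hand. For the converse I would assume that $\D_1$ and $\D_2$ have the same directed edges and are collider equivalent and prove $\I(\D_1)=\I(\D_2)$. Since both hypotheses are symmetric in the two graphs, it is enough to establish a one-sided transfer of connecting walks: every $\mu$-connecting walk in $\D_1$ from some $\alpha$ to some $\beta$ given $C$ yields a $\mu$-connecting walk in $\D_2$ from $\alpha$ to $\beta$ given $C$. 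This gives $\I(\D_2)\subseteq\I(\D_1)$, and exchanging $\D_1$ and $\D_2$ gives the reverse inclusion.

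The plan for the transfer is to rebuild a connecting walk block by block. Fix a $\mu$-connecting walk $\omega$ in $\D_1$. Every directed edge on $\omega$ lies in $\D_2$ as well, and I would retain all of these verbatim; in particular the last edge of $\omega$, which must be a directed edge $\gamma\rightarrow\beta$ because a $\mu$-connecting walk ends with a head at $\beta$ and in a cDG only directed edges carry heads, is kept, so the terminal head at $\beta$ is preserved for free. The remaining pieces of $\omega$ are its maximal blunt subwalks; every interior node of such a subwalk carries two stumps and is therefore a collider, so the subwalk is a collider walk with all colliders in $\an(C)$ and reduces to a collider path between its endpoints with the same property. For a maximal blunt subwalk between endpoints $u$ and $w$, collider equivalence (Definitions~\ref{def:collCover} and~\ref{def:collEq}) supplies a covering collider path in $\D_2$ between $u$ and $w$ whose interior nodes are ancestors of $\{u,w\}$ or of the original colliders, hence lie in $\an(\{u,w\}\cup C)$; when $u,w\notin C$, Proposition~\ref{prop:mConnChangeAn} then converts this covering path into a walk in $\D_2$ between $u$ and $w$ whose colliders lie in $\an(C)$ and whose noncolliders avoid $C$. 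Concatenating the retained directed edges with these replacement walks produces the candidate $\omega'$ in $\D_2$.

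It then remains to check that $\omega'$ is $\mu$-connecting given $C$, and this verification is where I expect the real difficulty to sit. The interior of each replacement block is controlled by construction, and the head at $\beta$ is already secured, so everything reduces to the junction nodes where a replacement block abuts a retained directed edge. If such a junction is a noncollider of $\omega$, then some incident directed edge has its tail there; retaining that shared edge forces the junction to remain a noncollider, and since noncolliders avoid $C$ this is harmless no matter what marks the replacement block presents. If the junction is a collider of $\omega$ that lies in $\an(C)\setminus C$, then either mark the replacement produces is acceptable, since a collider there is in $\an(C)$ and a noncollider there is outside $C$. The genuinely hard case, and the crux of the argument, is a junction that is a collider of $\omega$ lying in $C$ and incident to a blunt edge: there the retained directed edge contributes a head, Proposition~\ref{prop:mConnChangeAn} is not even applicable because the endpoint is in $C$, and one must guarantee that the abutting replacement block meets the junction with a neck rather than a tail, for otherwise it would become a noncollider inside $C$ and break the walk. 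Since the covering relation of Definition~\ref{def:collCover} does not pin down the marks at the endpoints of the covering path, settling this point is the heart of the proof. I would handle it by splitting $\omega$ at each such collider of $C$ into subwalks that start and end with a neck there, and by rerouting through $C$ along the shared directed edges so that the colliders on which connectivity depends are entered and exited through directed heads; once every block can be made to present a neck at each collider-in-$C$ junction, $\omega'$ satisfies all the defining conditions of a $\mu$-connecting walk, and the transfer, hence the theorem, follows.
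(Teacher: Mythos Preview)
Your overall strategy matches the paper's, and the easy direction is exactly right. For the converse both you and the paper transfer a $\mu$-connecting walk by decomposing it, replacing the pieces that use blunt edges via collider equivalence and Proposition~\ref{prop:mConnChangeAn}, and keeping directed edges intact. The difference is the decomposition, and yours creates precisely the obstruction you flag as the crux: cutting into maximal \emph{blunt} subwalks allows a junction like $u$ in $\cdots\rightarrow u\unEdge\cdots$ with $u\in C$, where Proposition~\ref{prop:mConnChangeAn} does not apply and Definition~\ref{def:collCover} gives no control over the mark the covering path presents at $u$. Your proposed fix (``rerouting through $C$ along the shared directed edges'') is not precise enough to constitute a proof.

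The paper sidesteps this with a coarser decomposition into \emph{maximal collider segments}: maximal subwalks whose interior nodes are colliders on $\omega$, whose endpoints are \emph{not} colliders on $\omega$, and which contain at least one blunt edge. Such a segment may include directed edges adjacent to the blunt run (in your hard case it absorbs the edge $\cdots\rightarrow u$), and the point is that its endpoints $\delta,\varepsilon$ are then noncolliders on $\omega$ or equal to $\alpha$, hence automatically outside $C$; one also checks that the final directed edge into $\beta$ prevents any segment from ending at $\beta$. Now Proposition~\ref{prop:mConnChangeAn} applies directly with $\delta,\varepsilon\notin C$. After substitution, the only thing to verify at a junction $\delta$ is that if it changes from noncollider to collider then $\delta\in\an(C)$; but in that case the original segment's edge at $\delta$ was $\delta\rightarrow\gamma_1$ with $\gamma_1$ an interior collider in $\an(C)$, and the conclusion is immediate. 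So the repair to your argument is simply to enlarge your blocks so that they absorb the adjacent collider-creating directed edges; then your hard case disappears and the rest of your outline goes through.
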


We give a direct proof of this theorem. One can also use the augmentation 
criterion (Appendix \ref{app:aug}) to show this result.

\begin{proof}
	Assume first that $\D_1$ and $\D_2$ have the same directed edges and are 
	collider equivalent. Then $\an_{\D_1}(C) = \an_{\D_2}(C)$ for all 
	$C\subseteq V$ so we will omit the subscript and write simply $\an(C)$. Let 
	$\omega$ 
	denote a $\mu$-connecting walk from $\alpha$ to $\beta$ given 
	$C$ in $\D_1$. We will argue that we can also find a $\mu$-connecting walk 
	in $\D_2$. We say that a nontrivial subwalk of $\omega$ is a {\it maximal 
	collider 
	segment} if all its nonendpoint nodes are colliders on $\omega$, its 
	endpoint nodes are not colliders, and it contains at least one blunt 
	edge 
	(note that on a general walk this should 
	be read as {\it instances} of these nodes and edges as nodes and edges may 
	be repeated on a 
	walk). We can partition $\omega$ into a sequence of subwalks such that 
	every subwalk is either a maximal collider segment, or a subwalk consisting 
	of directed edges only. We note that maximal collider segments may be 
	adjacent, i.e., share an endpoint. Every segment of $\omega$
	that consists of directed edges only is also present in $\D_2$. Consider a 
	maximal collider segment. This is necessarily a collider walk in $\D_1$. 
	Then there 
	exists a collider path in $\D_1$, and therefore a covering collider path, 
	$\rho$, in 
	$\D_2$ using 
	collider equivalence. Assume that $\rho$ is 
	between $\delta$ and $\varepsilon$. 
	$\delta $ and $\varepsilon$ are noncolliders on $\omega$, or 
	endpoint nodes on $\omega$. If $\delta = \alpha$ or $\varepsilon = \alpha$, 
	then they are not in $C$. The final 
	edge must be directed and point towards $\beta$ and therefore it is not in 
	a maximal collider segment, and $\delta$ and $\varepsilon$ are not the 
	final node on $\omega$. In either case, we see that 
	$\delta,\varepsilon\notin C$. We 
	will now find an open (given 
	$C$) walk between $\delta$ and $\varepsilon$ using $\rho$. We know that 
	$\rho$ is a collider path and that every nonendpoint node on $\rho$ is an 
	ancestor of $\{\alpha,\beta\}$ or of a collider in the original maximal 
	collider segment, and therefore to $C$. It follows from Proposition 
	\ref{prop:mConnChangeAn} that we can 
	find a walk between $\delta$ and $\varepsilon$ such that no noncollider is 
	in $C$ and every collider is in $\an(C)$. We create a 
	walk from $\alpha$ to $\beta$ in $\D_2$ by simply substituting each maximal 
	collider segment with the corresponding open walk. This walk is open in any 
	node which is not an endpoint of a maximal collider segment. If an endpoint 
	of 
	maximal collider node changes collider status on this new walk, then it 
	must be a noncollider on $\omega$ and a 
	parent of a node in $\an(C)$, i.e., also in $\an(C)$ itself. Finally, 
	we note that the last segment (into $\beta$) is not a maximal collider 
	segment and therefore still has a head into $\beta$.
	
	On the other hand, if they do not have the same directed edges, it follows 
	from Proposition \ref{prop:dirEdgeSep} that they are not Markov equivalent. 
	If they are not collider equivalent, it follows from Lemma 
	\ref{lem:notCEthenNotME} that they are not Markov equivalent.
\end{proof}

\begin{figure}
	\centering
	\begin{minipage}{0.45\linewidth}
		\centering
		\begin{tikzpicture}
		\node[shape=circle,draw=black] (a) at (0,2) {$\alpha$};
		\node[shape=circle,draw=black] (b) at (2,2) {$\beta$};
		\node[shape=circle,draw=black] (c) at (4,2) {$\gamma$};
		\node[shape=circle,draw=black] (d) at (1,0) {$\delta$};
		\node[shape=circle,draw=black] (e) at (3,0) {$\varepsilon$};
		
		\path [|-|] (a) edge [bend left = 10] node {} (b);
		\path [|-|] (d) edge [bend left = 0] node {} (e);
		\path [|-|] (a) edge [bend right = 10] node {} (d);
		\path [<-] (d) edge [bend left = 20] node {} (e);
		\path [->] (d) edge [bend right = 20] node {} (e);
		\path [|-|](b) edge [bend left = 10] node {} (c);
		\path [|-|](e) edge [bend right = 10] node {} (c);
		\draw[every loop/.append style={->}] (b) edge[loop above] 
		node {}  (b);
		\draw[every loop/.append style={->}] (a) edge[loop above] 
		node {}  (a);
		\draw[every loop/.append style={->}] (c) edge[loop above] 
		node {}  (c);
		\draw[every loop/.append style={->}] (d) edge[loop below] 
		node {}  (d);
		\draw[every loop/.append style={->}] (e) edge[loop below] 
		node {}  (e);	
		\end{tikzpicture}
	\end{minipage}
	\begin{minipage}{0.45\linewidth}
		\centering
		\begin{tikzpicture}
		\node[shape=circle,draw=black] (a) at (0,2) {$\alpha$};
		\node[shape=circle,draw=black] (b) at (2,2) {$\beta$};
		\node[shape=circle,draw=black] (c) at (4,2) {$\gamma$};
		\node[shape=circle,draw=black] (d) at (1,0) {$\delta$};
		\node[shape=circle,draw=black] (e) at (3,0) {$\varepsilon$};
		
		\path [|-|] (a) edge [bend left = 10] node {} (b);
		\path [|-|] (a) edge [bend right = 10] node {} (d);
		\path [<-] (d) edge [bend left = 10] node {} (e);
		\path [->] (d) edge [bend right = 10] node {} (e);
		\path [|-|](b) edge [bend left = 10] node {} (c);
		\path [|-|](e) edge [bend right = 10] node {} (c);
		\draw[every loop/.append style={->}] (b) edge[loop above] 
		node {}  (b);
		\draw[every loop/.append style={->}] (a) edge[loop above] 
		node {}  (a);
		\draw[every loop/.append style={->}] (c) edge[loop above] 
		node {}  (c);
		\draw[every loop/.append style={->}] (d) edge[loop below] 
		node {}  (d);
		\draw[every loop/.append style={->}] (e) edge[loop below] 
		node {}  (e);	
		\end{tikzpicture}
		\end{minipage}
	\caption{Markov equivalence in cDGs. The two graphs 
	have the same weak inducing paths, but 
	are not Markov equivalent as the collider path $\alpha \unEdge \delta 
	\unEdge \epsilon 
	\unEdge \gamma$ is not covered in the right graph. 
	}
	\label{fig:meExmps2}
\end{figure}
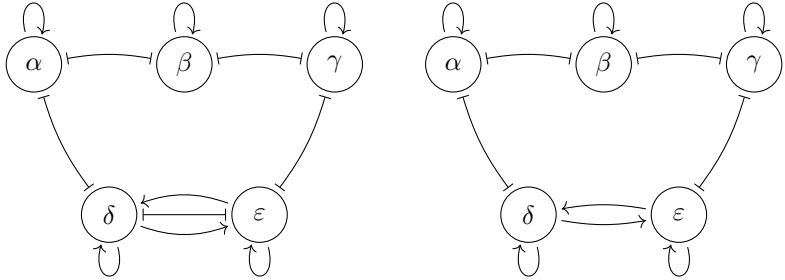

In the case of {\it directed acyclic graphs} it holds 
that Markov equivalent graphs have the same adjacencies, however, this is not 
true in the case of cDGs, and in fact, it is also not true among maximal cDGs 
(Definition \ref{def:max}) as seen in Figure \ref{fig:maxDiffAdj}.

\begin{prop}
	Let $\D= (V,E)$ be a cDG, and let $\alpha,\beta \in V$. Let $e$ 
	denote the blunt edge between $\alpha$ and $\beta$. If $\alpha$ and 
	$\beta$ are connected by a weak inducing path consisting of blunt 
	edges only, then $\D 
	+ e\in [\D]$. 
\end{prop}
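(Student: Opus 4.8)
The plan is to reduce the statement to the characterization of Markov equivalence in Theorem \ref{thm:meChar}. Since $\D$ contains every loop (our standing assumption) and $e$ is a blunt edge between distinct nodes, $\D + e$ is again a cDG containing every loop, so Theorem \ref{thm:meChar} applies and it suffices to show that $\D$ and $\D + e$ have the same directed edges and are collider equivalent (Definitions \ref{def:collCover} and \ref{def:collEq}). The first condition is immediate: $e$ is blunt, so $\D$ and $\D + e$ carry exactly the same directed edges, and in particular $\an_{\D}(\cdot) = \an_{\D+e}(\cdot)$, so I will write $\an(\cdot)$ without ambiguity throughout. Thus the whole content is to verify collider equivalence.

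One inclusion is trivial. Since $E \subseteq E \cup \{e\}$, every collider path of $\D$ is literally a collider path of $\D + e$, and a path covers itself, so every collider path of $\D$ is covered in $\D + e$. For the converse, let $W$ be the blunt weak inducing path between $\alpha$ and $\beta$ supplied by the hypothesis. Because $W$ uses blunt edges only, each of its internal nodes sits between two blunt edges and is therefore a collider, so $W$ is a nontrivial collider path in $\D$ whose every nonendpoint node is an ancestor of $\alpha$ or $\beta$ (Definition \ref{def:wIP}). I would take an arbitrary collider path $\rho$ in $\D + e$ between nodes $\delta$ and $\varepsilon$; if $\rho$ avoids $e$ it is a collider path of $\D$ covering itself, so I may assume it uses $e$. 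The construction is to replace every occurrence of the edge $e = \alpha \unEdge \beta$ on $\rho$ by the blunt path $W$, producing a walk $\rho'$ in $\D$ between $\delta$ and $\varepsilon$.

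Two things then need checking. First, $\rho'$ is a collider walk: at each junction (an instance of $\alpha$ or $\beta$ where $e$ was attached) the substituted edge of $W$ is again blunt and hence still has a neck there, so the collider status of that instance is unchanged; the newly inserted internal nodes of $W$ lie between two blunt edges and are colliders; and every other node keeps its incident edges. Second, $\rho'$ covers $\rho$: its nonendpoint nodes are either nonendpoint nodes already on $\rho$ (hence colliders of $\rho$, or one of $\delta,\varepsilon$), which are ancestors of themselves, or internal nodes of a copy of $W$, which are ancestors of $\alpha$ or $\beta$. Since each instance of $\alpha$ and $\beta$ on $\rho$ is either an endpoint of $\rho$ or a collider of $\rho$, every such $W$-node lies in $\an(\{\delta,\varepsilon\}) \cup \bigcup_i \an(c_i)$, where the $c_i$ are the colliders of $\rho$. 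This is exactly the requirement of Definition \ref{def:collCover}, and by the remark that collider covering may equivalently be tested with walks in place of paths, $\rho$ is covered in $\D$. This establishes collider equivalence and hence Markov equivalence.

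I expect the main obstacle to be the bookkeeping in the collider-status verification at the junction nodes, in particular the case analysis over whether a given instance of $\alpha$ or $\beta$ on $\rho$ is an endpoint of $\rho$ or a collider of $\rho$, and confirming in each case that the inserted $W$-nodes land in $\an(\{\delta,\varepsilon\}) \cup \bigcup_i \an(c_i)$. Everything else follows directly from Theorem \ref{thm:meChar} together with the fact that a blunt edge carries a neck at both of its endpoints.
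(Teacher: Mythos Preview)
Your proof is correct, but it follows a genuinely different route from the paper's. The paper argues directly from the definition of Markov equivalence: it takes a $\mu$-connecting walk $\omega$ from $\delta$ to $\varepsilon$ given some $C$ in $\D+e$, and whenever $e$ occurs on $\omega$ it replaces that edge by an \emph{open} walk between $\alpha$ and $\beta$ given $C\setminus\{\alpha,\beta\}$ with necks at both ends, constructed by rerunning the argument of Proposition~\ref{prop:inSepWIP} on the blunt weak inducing path. This replacement walk depends on the conditioning set $C$, because on a $\mu$-connecting walk colliders must lie in $\an(C)$, and the internal nodes of $W$ are only known to be ancestors of $\alpha$ or $\beta$, not necessarily of $C$. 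By contrast, you factor through Theorem~\ref{thm:meChar} and only have to verify collider equivalence, where the covering condition is phrased relative to the endpoints and colliders of the particular path $\rho$ being covered; since $\alpha$ and $\beta$ are always either endpoints or colliders of $\rho$, the bare substitution of $W$ for $e$ already works with no further adjustment. Your approach is therefore shorter and avoids the somewhat delicate modification of the proof of Proposition~\ref{prop:inSepWIP}, at the cost of relying on the characterization theorem; the paper's approach is more self-contained. One minor remark: since $\rho$ is a path, the edge $e$ can occur at most once on it, so ``every occurrence'' is really ``the occurrence''.
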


\begin{proof}
	Let $\omega$ be a $\mu$-connecting walk between $\delta$ and $\varepsilon$ 
	given $C$ 
	in $\D + e$. 
	If $e$ is not on $\omega$, then $\omega$ is also in $\D$ and 
	connecting as the ancestral relations are the same in $\D$ and $\D + e$. 
	If $e$ is on $\omega$, then consider the weak inducing path 
	between $\alpha$ and 
	$\beta$ in $\D$ that consists of blunt edges only. Using a proof similar to 
	that of 
	Proposition 
	\ref{prop:inSepWIP} (let $k$ in the proof of that proposition fulfil the 
	additional assumptions that the corresponding walk in that proof has necks 
	at both 
	endpoints, only contains one instance of $\alpha$, and does not contain any 
	instances of $\beta$), one 
	can show that there exists an open walk between 
	$\alpha$ and $\beta$ given $C\setminus \{\alpha,\beta\}$ in $\D$ which has 
	necks at both ends and which only contains one instance of both $\alpha$ 
	and 
	$\beta$. This means 
	that replacing $\alpha\unEdge\beta$ with this walk gives a $\mu$-connecting 
	walk given $C$ in $\D$. 
\end{proof}

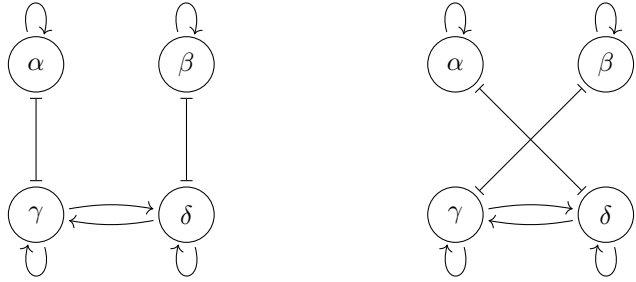
\begin{figure}
	\centering
	\begin{minipage}{0.45\linewidth}
		\centering
		\begin{tikzpicture}
		\node[shape=circle,draw=black] (a) at (0,2) {$\alpha$};
		\node[shape=circle,draw=black] (b) at (2,2) {$\beta$};
		\node[shape=circle,draw=black] (c) at (0,0) {$\gamma$};
		\node[shape=circle,draw=black] (d) at (2,0) {$\delta$};
		
		\path [|-|] (a) edge [bend right = 0] node {} (c);
		\path [|-|](b) edge [bend right = 0] node {} (d);
		\path [->](d) edge [bend left = 10] node {} (c);
		\path [<-](d) edge [bend left = -10] node {} (c);
		\draw[every loop/.append style={->}] (b) edge[loop above] 
		node {}  (b);
		\draw[every loop/.append style={->}] (a) edge[loop above] 
		node {}  (a);
		\draw[every loop/.append style={->}] (c) edge[loop below] 
		node {}  (c);
		\draw[every loop/.append style={->}] (d) edge[loop below] 
		node {}  (d);	
		\end{tikzpicture}
	\end{minipage}
	\begin{minipage}{0.45\linewidth}
		\centering
		\begin{tikzpicture}
		\node[shape=circle,draw=black] (a) at (0,2) {$\alpha$};
		\node[shape=circle,draw=black] (b) at (2,2) {$\beta$};
		\node[shape=circle,draw=black] (c) at (0,0) {$\gamma$};
		\node[shape=circle,draw=black] (d) at (2,0) {$\delta$};
		
		\path [|-|] (a) edge [bend right = 0] node {} (d);
		\path [|-|](b) edge [bend right = 0] node {} (c);
		\path [->](d) edge [bend left = 10] node {} (c);
		\path [<-](d) edge [bend left = -10] node {} (c);
		\draw[every loop/.append style={->}] (b) edge[loop above] 
		node {}  (b);
		\draw[every loop/.append style={->}] (a) edge[loop above] 
		node {}  (a);
		\draw[every loop/.append style={->}] (c) edge[loop below] 
		node {}  (c);
		\draw[every loop/.append style={->}] (d) edge[loop below] 
		node {}  (d);	
		\end{tikzpicture}
	\end{minipage}
	\caption{The two cDGs constitute a Markov equivalence class, and they are 
	both seen to be maximal. However, they do not have the same adjacencies. A 
	similar phenomenon can occur in DGs (without loops) under $d$-separation
	\citep{richardson1996, richardson1997}.}
	\label{fig:maxDiffAdj}
\end{figure}

\subsection{Markov equivalent permutation of nodes}

The example in Figure \ref{fig:maxDiffAdj} shows a characteristic 
of some Markov equivalent cDGs. In the example, one can obtain one graph from 
the other by a permutation of the endpoints of blunt edges within the set 
$\{\gamma,\delta\}$. In this section, we formulate sufficient conditions for a 
cDG to be Markov equivalent with a {\it permutation graph}.

\begin{defn}[Cyclic set]
	We say that $S \subseteq V$ is a \emph{cyclic set} if for every 
	$(\alpha,\beta)\in S\times S$, it holds that $\alpha \in \an(\beta)$.
	\label{def:cyclicSet}
\end{defn}

The following is a formal definition of a {\it permutation graph} as 
illustrated in the example of Figure \ref{fig:maxDiffAdj}.

\begin{defn}[Permutation graph]
	Let $\D= (V,E)$ be a cDG and let $\rho$ be a 
	permutation of 
	the node set $V$. We 
	define
	$\mathcal{P}_\rho(\D)$ as the cDG on nodes $V$ such that
	
	\begin{alignat}{3}
		\alpha & \rightarrow_{\mathcal{P}_\rho(\D)}\beta & \text{ if and only if
		} &\alpha \rightarrow_\D\beta, \\
	 \rho(\alpha) & \unEdge_{\mathcal{P}_\rho(\D)}\rho(\beta) & \text{ if and 
	 only if } 	
	 &\alpha \unEdge_\D\beta.
	\end{alignat}
\end{defn}

\begin{prop}
	Let $\D = (V,E)$ be a cDG which contains all loops and let $S\subseteq V$. 
	Let $\rho$ be 
	a 
	permutation of $V$ such that $\rho(\alpha) = \alpha$ for all $\alpha\notin 
	S$. If $\beta \rightarrow_\D \gamma$ and $\pa(\beta)=\pa(\gamma)$ for all 
	$\beta,\gamma\in S$, then 	
	$\mathcal{P}_\rho(\D) 
	\in [\D]$. 
	\label{prop:perm}
\end{prop}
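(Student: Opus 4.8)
The plan is to invoke the characterization of Markov equivalence in Theorem \ref{thm:meChar}. Since $\mathcal{P}_\rho(\D)$ has by construction exactly the same directed edges as $\D$, it suffices to prove that $\D$ and $\mathcal{P}_\rho(\D)$ are collider equivalent. I would first record that, directed edges being shared, $\an(C)$ is the same whether computed in $\D$ or in $\mathcal{P}_\rho(\D)$, and that $\mathcal{P}_\rho(\D)$ satisfies the same hypotheses as $\D$ but with $\rho^{-1}$ in place of $\rho$: the assumptions that $S$ is connected by directed edges in both directions and that $\pa(\beta)=\pa(\gamma)$ for $\beta,\gamma\in S$ concern only directed edges, which $\mathcal{P}_\rho$ leaves untouched, while $\mathcal{P}_{\rho^{-1}}(\mathcal{P}_\rho(\D))=\D$. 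Hence it is enough to prove one implication, namely that every collider path in $\D$ is covered in $\mathcal{P}_\rho(\D)$; the reverse covering then follows by applying the same implication to $\mathcal{P}_\rho(\D)$ with the permutation $\rho^{-1}$.

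The enabling observation is a normal form for collider paths in a cDG. On a collider path $\alpha \sim \gamma_1 \sim \cdots \sim \gamma_k \sim \beta$ every internal node is a collider, i.e.\ carries a neck on both incident edges; since a directed edge has a tail at exactly one of its endpoints and a collider cannot be that tail end, the only edges permitted to be directed are the two end edges, necessarily oriented $\alpha \rightarrow \gamma_1$ and $\beta \rightarrow \gamma_k$, whereas every edge strictly between two internal nodes must be blunt. Thus a collider path has the shape $\alpha\,[\rightarrow\text{ or }\unEdge]\,\gamma_1 \unEdge \cdots \unEdge \gamma_k\,[\unEdge\text{ or }\leftarrow]\,\beta$: a blunt chain capped by two end edges each having a neck at the adjacent $\gamma$. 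This rigid shape is what makes the rerouting under $\rho$ tractable.

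Given such a path I would build a covering collider walk in $\mathcal{P}_\rho(\D)$ by replacing the blunt chain $\gamma_1 \unEdge \cdots \unEdge \gamma_k$ with its image $\rho(\gamma_1) \unEdge \cdots \unEdge \rho(\gamma_k)$, which is present by definition of the permutation graph and is again a chain of colliders, and then reattaching the endpoints. Here the two hypotheses do the work. If an end edge is directed, say $\alpha \rightarrow \gamma_1$, then $\alpha \in \pa(\gamma_1)$ forces $\alpha \in \pa(\rho(\gamma_1))$ by the equal-parents assumption, so $\alpha \rightarrow \rho(\gamma_1)$ is available. If an end edge is blunt, $\alpha \unEdge \gamma_1$, and $\alpha \notin S$, then $\alpha \unEdge \rho(\gamma_1)$ is literally the image edge; while if $\alpha \in S$ I would splice in the directed edge $\alpha \rightarrow \rho(\alpha)$, available because $S$ is a complete directed set, and continue along $\rho(\alpha) \unEdge \rho(\gamma_1)$, which makes $\rho(\alpha)$ a collider. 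Symmetric moves handle $\beta$. To finish I would verify the covering condition: every internal node of the new walk is either some $\rho(\gamma_i)$ with $\gamma_i \notin S$, hence equal to $\gamma_i \in \an(\gamma_i)$, or lies in $S$ and satisfies $\rho(\gamma_i) \rightarrow \gamma_i$ (respectively $\rho(\alpha) \rightarrow \alpha$ or $\rho(\beta) \rightarrow \beta$), again because $S$ is complete directed, and is therefore an ancestor of a node of the original path.

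The main obstacle is precisely the bookkeeping at the endpoints: when $\alpha$ or $\beta$ lies in $S$ and its incident end edge is blunt, that blunt edge is carried away from the endpoint by $\rho$, so one cannot simply relabel the path. The resolution is the short directed detour $\alpha \rightarrow \rho(\alpha)$, legitimate only because $S$ is fully connected by directed edges and because an endpoint of a walk may carry a tail, and one must check that the inserted node is both a collider and an ancestor of the endpoint. Since the construction yields a collider walk rather than a path, I would close the argument by appealing to the remark following Definition \ref{def:collEq}, that covering by a collider walk implies covering by a collider path; collider equivalence, and hence Markov equivalence via Theorem \ref{thm:meChar}, then follows.
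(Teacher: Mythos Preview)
Your proof is correct and follows essentially the same strategy as the paper's: both reduce to collider equivalence via Theorem \ref{thm:meChar}, permute the blunt interior of a collider path, reattach the endpoints using either the equal-parents hypothesis (for directed end edges) or a short directed detour through $S$ (for blunt end edges whose endpoint lies in $S$), and verify the covering condition via the fact that every node of $S$ is an ancestor of every other. The one structural difference is that the paper first reduces to the case where $\rho$ is a single transposition---since any permutation factors as a composition of transpositions and the hypotheses, concerning only directed edges, are preserved at each intermediate step---which slightly streamlines the endpoint bookkeeping, whereas you treat a general $\rho$ directly. Your explicit normal form for collider paths (a blunt chain capped by two end edges each with a neck pointing inward) is a clean organizing device that the paper's proof leaves implicit.
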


Note that the condition that $\beta\rightarrow_\D\gamma$ for all 
$\beta,\gamma\in 
S$ 
implies that $S$ is a cyclic set.

\begin{proof}
	The graphs $\D$ and $\mathcal{P}_\rho(\D) $ have the same directed edges so 
	it suffices to show that they are collider equivalent (Theorem 
	\ref{thm:meChar}). 
	Any permutation can be written as a composition of transpositions so it 
	suffices to prove the result for a permutation, $\rho$, such that 
	$\rho(\alpha) = \beta$, $\rho(\beta) = \alpha$, and $\rho(\gamma) = \gamma$ 
	for all $\gamma\neq \alpha,\beta$. Let $\pi$ be a collider path in $\D$,
	
	$$
	\gamma \sim \delta_1 \sim \ldots \sim \delta_k \sim \varepsilon.
	$$
	
	\noindent If $\gamma,\varepsilon\notin \{\alpha,\beta\}$, then the path
	
	$$
	\gamma \sim \rho(\delta_1) \sim \ldots \sim \rho(\delta_k) \sim \varepsilon
	$$

	\noindent is in the permutation graph and is covering, using that $\alpha$ 
	and 
	$\beta$ have the same parent set. If, e.g., $\gamma=\alpha \unEdge 
	\delta_1$ 
	on the original path, then we can substitute this for $\alpha \rightarrow 
	\beta \unEdge \delta_1$ to obtain a covering walk in the permutation graph.
	Similar arguments in each case show that any collider path in 
	$\D$ is covered in the permutation graph. Repeating the above argument 
	starting from the permutation graph and using the transposition $\rho^{-1} 
	= \rho$ shows that 
	the 
	two 
	graphs are Markov equivalent. 
\end{proof}

Figure \ref{fig:maxDiffAdj} shows two graphs that are Markov equivalent by 
Proposition \ref{prop:perm}. In some graphs one can find permutations, 
not 
fulfilling the 
assumptions of Proposition 
\ref{prop:perm}, that generate Markov equivalent graphs, and this proposition 
is 
therefore not a necessary condition for Markov equivalence under 
permutation of blunt edges. One example is in the first row of Figure 
\ref{fig:meExmps1}. The center and right graphs are Markov equivalent and one 
is generated from the other by permuting the blunt edges of $\beta$ and 
$\gamma$, however, the 
conditions of Proposition \ref{prop:perm} are not fulfilled.

\section{Deciding Markov equivalence}
\label{sec:decME}

In this section, we will consider the problem of deciding Markov equivalence 
algorithmically. That is, given two cDGs on the same node set, how can we 
decide 
if they are Markov equivalent or not? A possible starting point is Theorem 
\ref{thm:meChar}. While it is computationally easy to check whether the 
directed edges of two cDGs are the same (quadratic in the number of nodes in 
their 
mutual node set), collider equivalence could be hard as there may be 
exponentially 
many collider paths in a cDG. In this section, we give a different 
characterization of Markov equivalence (Theorem \ref{thm:ancME}) which proves 
the correctness of a simple 
algorithm (Algorithm \ref{alg:ancME}) for deciding Markov equivalence of two 
cDGs. This algorithm avoids 
checking each collider path explicitly. However, in the worst case it also has 
a superpolynomial runtime which is to be expected due to the complexity result 
in Theorem \ref{thm:meCoNP}.

The {\it 
directed part} of a cDG, $\mathbb{D}(\D) = (V,F)$, is the DG on nodes $V$ such 
that $\alpha \rightarrow_{\mathbb{D}(\D)} \beta$ if and only if $\alpha 
\rightarrow_\D \beta$. The {\it blunt part} of a cDG, $\mathbb{U}(\D)$, is the 
cDG obtained by removing all directed edges. The {\it blunt components} of $\D$ 
are the connected components of $\mathbb{U}(\D)$. We say that $\D_1 = (V,E_1)$ 
and $\D_2 = (V,E_2)$ have {\it the same collider 
connections} if it holds for all $\alpha \in V$ and $\beta\in V$ that $\alpha$ 
and 
$\beta$ are collider connected in $\D_1$ if and only if they are 
collider connected in $\D_2$. We say that a subset of nodes, $A$, is {\it 
ancestral} if $A = \an(A)$. We will throughout only consider cDGs that contain 
every loop.

We start from the following result which is seen to be a reformulation of the 
augmentation criterion (Appendix \ref{app:aug}).

\begin{thm}
	Let $\D_1 = (V,E_1)$ and $\D_2 = (V,E_2)$ be cDGs (both containing all 
	loops) such that 
	$\mathbb{D}(\D_1) = \mathbb{D}(\D_2)$. $\D_1$ and $\D_2$ are Markov 
	equivalent if and 
	only if for every ancestral set, it holds 
	that 
	$(\D_1)_A$ and $(\D_2)_A$ have the same collider connections. 
	\label{thm:ancME}
\end{thm}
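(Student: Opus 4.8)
The plan is to reduce everything to Theorem \ref{thm:meChar}. Since $\mathbb{D}(\D_1) = \mathbb{D}(\D_2)$, the two graphs share all their directed edges, and therefore the same ancestral relation; I will accordingly write $\an$ with no subscript and observe that the notion of an ancestral set $A = \an(A)$ is unambiguous. By Theorem \ref{thm:meChar}, given identical directed edges, Markov equivalence is equivalent to collider equivalence. Hence it suffices to prove that $\D_1$ and $\D_2$ are collider equivalent if and only if $(\D_1)_A$ and $(\D_2)_A$ have the same collider connections for every ancestral set $A$.

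For the forward direction I would assume collider equivalence and fix an ancestral set $A$. If $\alpha,\beta\in A$ are collider connected in $(\D_1)_A$, there is a collider path $\alpha\sim\gamma_1\sim\ldots\sim\gamma_k\sim\beta$ in $\D_1$ all of whose nodes lie in $A$. By collider equivalence this path is covered in $\D_2$ by a collider path between $\alpha$ and $\beta$ whose nonendpoint nodes all lie in $\an(\{\alpha,\beta\})\cup\bigcup_i\an(\gamma_i)\subseteq\an(A)=A$. Thus $\alpha$ and $\beta$ are collider connected in $(\D_2)_A$, and the symmetric argument (swapping $\D_1$ and $\D_2$) gives the reverse inclusion, so the two induced graphs have the same collider connections.

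For the backward direction I would take an arbitrary nontrivial collider path $\omega:\alpha\sim\gamma_1\sim\ldots\sim\gamma_k\sim\beta$ in $\D_1$ and set $A=\an(\{\alpha,\beta,\gamma_1,\ldots,\gamma_k\})$, which is ancestral and contains every node of $\omega$. Then $\omega$ is a collider path inside $(\D_1)_A$, so $\alpha$ and $\beta$ are collider connected there; by hypothesis they are collider connected in $(\D_2)_A$ as well, which yields a collider path in $\D_2$ between $\alpha$ and $\beta$ all of whose nodes lie in $A=\an(\{\alpha,\beta\})\cup\bigcup_i\an(\gamma_i)$. This is exactly the covering condition of Definition \ref{def:collCover}, so $\omega$ is covered in $\D_2$; the symmetric argument covers every collider path of $\D_2$ in $\D_1$, establishing collider equivalence.

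The main point to get right is the exact match between the ancestral closure $A=\an(\{\alpha,\beta,\gamma_1,\ldots,\gamma_k\})$ and the node set permitted by the covering condition: Definition \ref{def:collCover} allows precisely the ancestors of the two endpoints together with the ancestors of the interior nodes, which is exactly this $A$ (using the convention $\gamma\in\an(\gamma)$). The remaining care is routine bookkeeping: the walk/path equivalence for collider connections is legitimate because all loops are present, so a collider walk between distinct nodes may be replaced by a collider path between the same nodes; and in the degenerate case where $\alpha$ and $\beta$ are adjacent, a single edge is a collider path that is covered by any edge between $\alpha$ and $\beta$, so no interior nodes need to be tracked.
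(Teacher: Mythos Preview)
Your proof is correct and follows essentially the same approach as the paper: both directions reduce to collider equivalence via Theorem~\ref{thm:meChar}, with the key step in the backward direction being to take the ancestral closure $A=\an(\{\alpha,\beta,\gamma_1,\ldots,\gamma_k\})$ of a given collider path. The only cosmetic difference is that the paper phrases the forward direction as a contrapositive (invoking Lemma~\ref{lem:notCEthenNotME} directly), whereas you argue it directly from collider equivalence; the content is identical.
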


\begin{proof}
	Assume that there exists an ancestral set $A \subseteq V$ such that 
	$\alpha$ 
	and $\beta$ are collider connected in $(\D_1)_A$, but not in $(\D_2)_A$. 
	There exists a collider path in $\D_1$ between $\alpha$ and $\beta$. 
	Any covering path in $\D_2$ must by definition consist of nodes in $\an(A) 
	= A$ and it follows that no such path can exists. By Lemma 
	\ref{lem:notCEthenNotME}, it follows that $\D_1$ and $\D_2$ are not Markov 
	equivalent.
	
	On the other hand, assume that for every 
	ancestral set $A\subseteq V$ and every $\alpha,\beta\in A$, it holds that 
	$\alpha$ and $\beta$ are 
	collider connected in $(\D_1)_A$ if and only if $\alpha$ and $\beta$ are 
	collider connected in $(\D_2)_A$. Using Theorem \ref{thm:meChar}, it 
	suffices to show that $\D_1$ and $\D_2$ are collider equivalent. 
	Consider a collider path between $\alpha$ and $\beta$ in $\D_1$, and let 
	$C$ denote the set of nodes on this path. This path is also a collider path 
	in $(\D_1)_{\an(\{\alpha,\beta\} \cup C)}$ and by assumption we can find a 
	collider path between 
	$\alpha$ and $\beta$ in $(\D_2)_{\an(\{\alpha,\beta\} \cup C)}$ as well. This 
	collider path is in 
	$\D_2$ as well and is covering the path in $\D_1$.
\end{proof}

The above theorem can easily be turned into an algorithm for deciding if two 
cDGs 
are 
Markov equivalent (Algorithm \ref{alg:ancME}). However, 
there may be exponentially many ancestral sets in a cDG. 
For instance, in the case where the only directed edges are loops all subsets 
of $V$ are ancestral and therefore the algorithm would need to compare collider 
connections in $2^n$ pairs of graphs where $n$ is the number of nodes in the 
graphs (or $2^n-1$, omitting the empty set).

\subsection{An algorithm for deciding equivalence}

In the algorithm based on Theorem \ref{thm:ancME} we will use the {\it 
condensation} of a cDG. This is not needed, but does provide a convenient 
representation of the ancestor relations between nodes in a cyclic graph. Let 
$\D = (V,E)$ be a cDG. We say that $\alpha,\beta\in V$ are {\it strongly 
	connected} if there exists a directed path from $\alpha$ to $\beta$ 
	and a 
directed path from $\beta$ to $\alpha$, allowing trivial paths. Equivalently, 
$\alpha$ and $\beta$ are strongly connected if and only if 
$\alpha\in\an(\beta)$ and $\beta\in\an(\alpha)$. This is an 
equivalence relation on the node set of a cDG and we say that the equivalence 
classes are the {\it strongly connected components} of the graph. The 
definition of strong 
connectivity is often used in DGs \citep{introAlgo}. We simply use a 
straightforward generalization to the class of 
cDGs such that the directed part of the cDG determines strong 
connectivity. The strongly connected components are also the maximal cyclic 
sets (Definition \ref{def:cyclicSet}).

The {\it condensation} of $\D$ (also known as the {\it acyclic component graph} 
of $\D$) is the directed 
acyclic graph obtained by contracting each strongly connected component to a 
single vertex. That is, if $C_1,\ldots,C_m$ are the strongly connected 
components of $\D$ ($C_i\subseteq V$ for all $i$), 
then the condensation of $\D$ has node set $\mathbb{C} = \{C_1,\ldots,C_m\}$ 
and $C_i 
\rightarrow C_j$ if $i\neq j$ and there exists $\alpha\in C_i,\beta\in C_j$ 
such that $\alpha\rightarrow_\D\beta$ \citep{introAlgo}. We denote the 
condensation of $\D$ by $\mathcal{C}(\D)$. We also define the {\it completed 
	condensation} of $\D$, $\bar{\mathcal{C}}(\D)$, which is the graph on nodes 
$\mathbb{C}\cup \{\emptyset\}$ such that $\bar{\mathcal{C}}(\D)_\mathbb{C} = 
\mathcal{C}(\D)$ and such 
that 
$\emptyset$ is a parent of every other node and a child of none. The 
condensation and the completed condensation are both DAGs. When $\D$ has $d$ 
directed edges that are not loops, then strongly connected components can be 
found in linear time, that is, $O(n + d)$ where $n = \vert 
V\vert$ \citep{introAlgo}.

In the following, we will be considering sets of nodes in $\D$, i.e., 
subsets of $V$, as well as sets of nodes in $\mathcal{C}(\D)$, that is, subsets 
of 
$\mathbb{C}$. We write the former as capital letters, $A,B,C$. We write the 
latter as capital letters in bold font, $\mathbf{A},\mathbf{B},\mathbf{C}$, to 
emphasize that they are 
subsets of $\mathbb{C}$, not of $V$. 

\begin{prop}
	The ancestral sets in $\D$ are exactly the sets of the form $\bigcup_{C\in 
	\mathbf{A}} C$ for an  ancestral set, $\mathbf{A}$, in $\mathcal{C}(\D)$.
	\label{prop:condAnc}
\end{prop}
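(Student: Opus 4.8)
The plan is to reduce the statement to two facts: that ancestry in $\D$ is faithfully mirrored by ancestry in the condensation $\mathcal{C}(\D)$, and that every ancestral set in $\D$ is forced to be a union of strongly connected components. Throughout, recall that $A$ is ancestral means $A = \an(A)$, and since $A \subseteq \an(A)$ always holds, this is equivalent to $\an(A) \subseteq A$, i.e. closure under taking ancestors; the same remark applies to $\mathbf{A}$ in $\mathcal{C}(\D)$.

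First I would record the correspondence between directed paths. Reading off the sequence of strongly connected components visited by a directed path in $\D$ (and collapsing consecutive repeats) produces a directed walk in $\mathcal{C}(\D)$; since $\mathcal{C}(\D)$ is a DAG this walk witnesses ancestry. Conversely, a directed path $C_k \to \cdots \to C_i$ in $\mathcal{C}(\D)$ lifts to a directed path in $\D$ joining any chosen node of $C_k$ to any chosen node of $C_i$, because within each component every node reaches every other. Hence, for components $C_k, C_i \in \mathbb{C}$, one has $C_k \in \an_{\mathcal{C}(\D)}(C_i)$ if and only if there are $\alpha \in C_k$, $\beta \in C_i$ with $\alpha \in \an_\D(\beta)$, in which case this in fact holds for all such $\alpha,\beta$. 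The trivial-path convention $\gamma \in \an(\gamma)$ covers the diagonal case $C_k = C_i$ on both sides.

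Next I would show that any ancestral set $A$ in $\D$ is a union of components: if $\alpha \in A \cap C_i$ and $\beta \in C_i$ is arbitrary, then $\beta \in \an_\D(\alpha)$ by strong connectivity, so $\beta \in A$ as $A$ is ancestral; thus $C_i \subseteq A$. Setting $\mathbf{A} = \{ C \in \mathbb{C} : C \subseteq A \}$ gives $A = \bigcup_{C \in \mathbf{A}} C$. With the correspondence in hand, the equivalence is then immediate in both directions. If $A$ is ancestral and $C_i \in \mathbf{A}$ with $C_k \in \an_{\mathcal{C}(\D)}(C_i)$, choose $\alpha \in C_k$, $\beta \in C_i$ with $\alpha \in \an_\D(\beta)$; then $\beta \in A$ forces $\alpha \in A$, so $C_k \subseteq A$ and $C_k \in \mathbf{A}$, proving $\mathbf{A}$ ancestral. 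Conversely, if $\mathbf{A}$ is ancestral in $\mathcal{C}(\D)$ and $A = \bigcup_{C \in \mathbf{A}} C$, take $\beta \in A$ with $\beta \in C_i \in \mathbf{A}$ and any $\alpha \in \an_\D(\beta)$, say $\alpha \in C_k$; projecting the path gives $C_k \in \an_{\mathcal{C}(\D)}(C_i)$, whence $C_k \in \mathbf{A}$ and $\alpha \in A$, so $A$ is ancestral.

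I expect the only delicate point to be the formal bookkeeping in the projection and lifting of directed paths, in particular making sure the self-ancestry convention is respected consistently in $\D$ and in $\mathcal{C}(\D)$ so that the diagonal cases are not lost; once that correspondence is stated cleanly, the remainder is routine set manipulation.
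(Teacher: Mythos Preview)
Your proof is correct and follows essentially the same approach as the paper: both directions are argued by passing between ancestry in $\D$ and ancestry in $\mathcal{C}(\D)$, using that an ancestral set in $\D$ must be a union of strongly connected components. Your write-up is in fact more careful than the paper's, which leaves the path-projection/lifting correspondence and the ``union of components'' step implicit.
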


\begin{proof}
	Consider an ancestral set $A \subseteq V$. We can write this as a union of 
	strongly connected components, $A = \bigcup C_i$. These strongly connected 
	components must necessarily constitute an ancestral set in 
	$\mathcal{C}(\D)$.
	
	On the other hand, consider an ancestral set in $\mathcal{C}(\D)$, 
	$\mathbf{A}$, and 
	consider $\alpha \in A =\bigcup_{C\in 
		\mathbf{A}} C$. Assume that $\alpha\in C\in \mathbf{A}$. If $\beta$ is 
		an ancestor of 
	$\alpha$ in $\D$, then $\beta \in \tilde{C}$ such that $\tilde{C}$ is an 
	ancestor of $C$ in $\mathcal{C}(\D)$. By assumption, $\mathbf{A}$ is 
	ancestral, so 
	$\tilde{C}\in \mathbf{A}$ and 
	we see that $A$ is ancestral.
\end{proof}

The above proposition shows that we can consider the condensation when finding 
ancestral sets in a cDG. We let $\mathbb{A}(\D)$ denote the set of ancestral 
sets in $\D$. 
The correctness of Algorithm \ref{alg:ancME} follows from Theorem 
\ref{thm:ancME} and 
Proposition \ref{prop:condAnc}. The algorithm considers ancestral sets in 
the condensation, however, a version using ancestral sets directly in $\D_1$ is 
of course also possible. In the algorithm, one can decide collider connectivity 
by noting that $\alpha$ and $\beta$ are collider connected in a cDG, $\D$, if 
and only 
if there exists a blunt component, $\mathcal{U} = (U, E_U)$, such that 
$\alpha\in \pa_\D(U)$ and 
$\beta\in \pa_\D(U)$, using that the graphs contain every loop.

\begin{algorithm}
	\caption{Markov equivalence}
	\begin{algorithmic}
		\REQUIRE cDGs, $D_1=(V,E_1),D_2=(V,E_2)$
		\IF{$\mathbb{D}(\D_1) \neq \mathbb{D}(\D_2)$}
		 \RETURN FALSE
		\ENDIF
				\FOR{$\textbf{A} \in \mathbb{A}(\mathcal{C}(\D_1))$}
				\STATE Define $A = \bigcup_{C\in 
					\mathbf{A}} C$
				\IF{$(\D_1)_A$ and $(\D_2)_A$ do not have the same collider 
				connections}
				\RETURN FALSE
				\ENDIF
				\ENDFOR
		\RETURN TRUE
	\end{algorithmic}
	\label{alg:ancME}
\end{algorithm}

\subsection{Virtual collider tripaths}

This section describes a graphical 
structure that we will call a {\it virtual collider tripath}. We will use these 
to give a necessary condition for Markov equivalence which is computationally 
easy to check.

\begin{defn}[Virtual collider tripath]
	Let $\alpha,\beta\in V$ and let $C$ be a node in $\bar{\mathcal{C}}(\D)$, 
	i.e., $C$ 
	is a 
	strongly connected component or the empty set. We say that 
	$(\alpha,\beta,C)$ is a \emph{virtual collider tripath} if there exists a 
	(nontrivial) 
	collider path $\alpha \sim \gamma_1 \sim \ldots \gamma_m \sim \beta$ such 
	that $\gamma_i \in \an(\{\alpha,\beta\}\cup C)$ for all $i=1,\ldots,m$.
	\label{def:vct}
\end{defn}

Note that if $\alpha =\beta$, then there is no path fulfilling the requirements 
of Definition \ref{def:vct}, hence $(\alpha,\alpha,C)$ is not a virtual 
collider tripath for any $C$. \citet{richardsonPolyAlgo1996} describes {\it 
virtual adjacencies} in DGs 
equipped with 
$d$-separation. Those are structures that in terms of separation act as 
adjacencies. The idea behind virtual collider tripaths is essentially the 
same; for a fixed pair of nodes, $\alpha$ and $\beta$, a virtual collider 
tripath, $(\alpha,\beta,C)$, acts as if there exists $\gamma\in C$ such that 
$\alpha \sim \gamma \sim \beta$ is a collider walk. Note also that if $\alpha$ 
and $\beta$ are adjacent, then 
$(\alpha,\beta,C)$ is 
virtual collider tripath for any strongly connected component $C$. Finally, 
note that there 
are no restrictions on whether or not $\alpha$, $\beta$, or both are elements 
in 
the set $C \subseteq V$. 

\begin{defn}[Maximal virtual collider tripath]
	We say that a virtual collider tripath, $(\alpha,\beta,C)$, is 
	\emph{maximal} if there is no $\tilde{C}\neq C$ such that 
	$(\alpha,\beta,\tilde{C})$ is a virtual collider tripath and $\tilde{C}$ is 
	an ancestor of $C$ in $\bar{\mathcal{C}}(\D)$. 
\end{defn}

We say that two cDGs have the same (maximal) virtual collider tripaths if it 
holds that $(\alpha,\beta,C)$ is a (maximal) virtual collider tripath in $\D_1$ 
if and only if $(\alpha,\beta,C)$ is a (maximal) virtual collider tripath in 
$\D_2$.

\begin{prop}
	Let $C$ be a strongly connected component or the empty set. If 
	$(\alpha,\beta,C)$ is not a virtual collider tripath and $\alpha\neq 
	\beta$, then $\beta$ and 
	$\alpha$ are 
	$m$-separated (Definition \ref{def:mSep} in Appendix \ref{app:proofMarkov}) 
	by $\an(\{\alpha,\beta\}\cup C) \setminus 
	\{\alpha,\beta\}$.
	\label{prop:noVCTsep}
\end{prop}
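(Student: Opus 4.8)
The plan is to prove the contrapositive: if $\alpha$ and $\beta$ are \emph{not} $m$-separated by $\an(\{\alpha,\beta\}\cup C)\setminus\{\alpha,\beta\}$, then $(\alpha,\beta,C)$ is a virtual collider tripath. Write $A=\an(\{\alpha,\beta\}\cup C)$ and $Z=A\setminus\{\alpha,\beta\}$, and note that $A$ is ancestral (so $\an(A)=A$) and that $\alpha,\beta\in A$. Since every collider on an $m$-connecting walk lies in $\an(Z)\subseteq\an(A)=A$, the whole argument hinges on the interplay between this containment and the ancestral closure of $A$.

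First I would reduce to a path: assuming $\alpha$ and $\beta$ are $m$-connected given $Z$, there is an $m$-connecting path $\pi$ between them (an $m$-connecting walk can always be taken to be a path). On $\pi$ every collider lies in $\an(Z)\subseteq A$, while every interior (nonendpoint) node is distinct from $\alpha$ and $\beta$; hence each interior noncollider, being a noncollider, is not in $Z$ and therefore lies \emph{outside} $A$.

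The main step is to show that $\pi$ has no interior noncollider at all, so that $\pi$ is itself a collider path with every nonendpoint in $A$ --- exactly a witness for the virtual collider tripath $(\alpha,\beta,C)$. Suppose for contradiction that $\gamma$ is an interior noncollider, so $\gamma\notin A$. Being a noncollider, $\gamma$ has a tail on one of its two path-edges, and a tail can only come from a directed edge, giving $\gamma\rightarrow\gamma'$ toward its neighbour $\gamma'$ on that side. I would then follow this tail direction along $\pi$: at each step the current node emits a tail into the next node, so the next node receives a head (a neck). If that next node were a collider it would lie in $\an(Z)\subseteq A$, forcing $\gamma\in\an(A)=A$, a contradiction; so each node reached is again a noncollider, and hence (its incoming path-edge being a neck) has an outgoing tail on its other path-edge, and the directed trace continues strictly forward along $\pi$. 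Since $\pi$ is finite with no repeated nodes, this directed sub-path terminates, and the only way to terminate without meeting a collider is to reach an endpoint, i.e.\ $\alpha$ or $\beta$. Consequently $\gamma\in\an(\{\alpha,\beta\})\subseteq A$, contradicting $\gamma\notin A$.

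Thus $\pi$ is a nontrivial collider path between $\alpha$ and $\beta$ whose every interior node is a collider lying in $A=\an(\{\alpha,\beta\}\cup C)$, which is precisely a virtual collider tripath $(\alpha,\beta,C)$ in the sense of Definition \ref{def:vct}; this establishes the contrapositive and hence the proposition. The step I would treat most carefully is the termination of the directed trace: it relies on working with a path rather than a walk (so that following tails cannot cycle back on itself) together with the ancestral closedness of $A$, which is what lets a collider encountered along the trace immediately place $\gamma$ in $A$. The reduction from an $m$-connecting walk to an $m$-connecting path is the standard fact I would invoke to secure this.
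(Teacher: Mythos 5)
Your proof is correct and follows essentially the same route as the paper's: contraposition, take an $m$-connecting path, and rule out interior noncolliders by following the outgoing tail along the path until it reaches a collider (which lies in the ancestrally closed set $\an(\{\alpha,\beta\}\cup C)$, pulling the noncollider into the conditioning set) or an endpoint (placing it in $\an(\{\alpha,\beta\})$, with the same effect). Your "directed trace" is just a more explicit rendering of the paper's observation that a noncollider must be an ancestor of $\{\alpha,\beta\}$ or of a collider on the path.
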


\begin{proof}
	The contraposition follows from the definition of a virtual 
	collider tripath. Assume that $\omega$ is an $m$-connecting path between 
	$\alpha$ and $\beta$ given $\an(\{\alpha,\beta\}\cup C) \setminus 
	\{\alpha,\beta\}$. If it is a single edge, then $(\alpha,\beta,C)$ is a 
	virtual collider tripath for any $C$. Assume that it has length at least 
	two. If there is a noncollider, $\delta$, on $\omega$, then $\delta$ must 
	be an ancestor of $\{\alpha,\beta\}$ or of a collider. In the former case, 
	$\omega$ is closed as $\delta$ is in the conditioning set. In the latter 
	case, either $\omega$ is closed in the collider or in $\delta$. Assume 
	therefore that $\omega$ is a collider path. We see from the definition that 
	$(\alpha,\beta,C)$ is a virtual 
	collider tripath.
\end{proof}

The next theorem gives a necessary condition for Markov equivalence of cDGs.

\begin{thm}
	Let $\D_1 = (V,E_1)$ and $\D_2 = (V,E_2)$ be cDGs containing every loop. If 
	they are Markov 
	equivalent, then they have the same 
	directed edges and the same 
	maximal virtual collider tripaths.
\end{thm}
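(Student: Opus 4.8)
The plan is to reduce everything to Theorem \ref{thm:meChar} together with a short argument about how the covering relation of Definition \ref{def:collCover} interacts with ancestry. The statement about directed edges is immediate: if $\D_1$ and $\D_2$ are Markov equivalent, then Corollary \ref{cor:IDunCorr} gives that they have the same directed edges, and consequently $\an_{\D_1}(\cdot) = \an_{\D_2}(\cdot)$ and $\bar{\mathcal{C}}(\D_1) = \bar{\mathcal{C}}(\D_2)$; I would therefore write $\an$ and $\bar{\mathcal{C}}$ without a subscript from here on. It then remains to handle the maximal virtual collider tripaths, and I would in fact prove the stronger statement that $\D_1$ and $\D_2$ have \emph{exactly} the same virtual collider tripaths, after which maximality is automatic.

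First I would show that the two graphs have the same virtual collider tripaths. By Theorem \ref{thm:meChar}, Markov equivalence together with equal directed edges yields that $\D_1$ and $\D_2$ are collider equivalent. Suppose $(\alpha,\beta,C)$ is a virtual collider tripath in $\D_1$, witnessed by a collider path $\alpha \sim \gamma_1 \sim \ldots \sim \gamma_m \sim \beta$ with $\gamma_i \in \an(\{\alpha,\beta\}\cup C)$ for every $i$ (the list $\gamma_1,\ldots,\gamma_m$ possibly empty, corresponding to a single edge $\alpha\sim\beta$). Collider equivalence provides a covering collider path in $\D_2$ from $\alpha$ to $\beta$ whose nonendpoint nodes $\bar{\gamma}_j$ each satisfy $\bar{\gamma}_j \in \an(\alpha,\beta) \cup \bigcup_i \an(\gamma_i)$.

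The key step, and the only one requiring an argument, is to upgrade this to the virtual-collider-tripath condition in $\D_2$, i.e.\ $\bar{\gamma}_j \in \an(\{\alpha,\beta\}\cup C)$ for all $j$. This is where transitivity of ancestry does the work: $\an(\alpha,\beta) \subseteq \an(\{\alpha,\beta\}\cup C)$ trivially, and since each $\gamma_i$ already lies in $\an(\{\alpha,\beta\}\cup C)$, transitivity gives $\an(\gamma_i) \subseteq \an(\{\alpha,\beta\}\cup C)$; hence $\bar{\gamma}_j \in \an(\{\alpha,\beta\}\cup C)$. Thus the covering collider path in $\D_2$ witnesses that $(\alpha,\beta,C)$ is a virtual collider tripath in $\D_2$, and the symmetric argument gives the reverse inclusion, so the two graphs have identical sets of virtual collider tripaths. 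The empty-list case deserves a remark: when $\alpha\sim\beta$ is a single edge in $\D_1$, the union $\bigcup_i \an(\gamma_i)$ is empty, so every $\bar{\gamma}_j \in \an(\alpha,\beta)$ and the same conclusion holds; note that I do not, and cannot, assume that $\D_1$ and $\D_2$ share this adjacency, cf.\ Figure \ref{fig:maxDiffAdj}.

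Finally I would deduce equality of the maximal virtual collider tripaths. Maximality of $(\alpha,\beta,C)$ is defined purely in terms of (i) which triples $(\alpha,\beta,\tilde{C})$ are virtual collider tripaths and (ii) the ancestor relation among the nodes of $\bar{\mathcal{C}}(\D)$. Since $\D_1$ and $\D_2$ agree on (i) by the previous paragraph, and on (ii) because they have the same directed edges and hence the same completed condensation, a triple is a maximal virtual collider tripath in $\D_1$ precisely when it is one in $\D_2$, which completes the proof. The only genuinely substantive point is the ancestry-transitivity step above; everything else is bookkeeping layered on top of Theorem \ref{thm:meChar} and Corollary \ref{cor:IDunCorr}.
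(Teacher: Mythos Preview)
Your proof is correct and actually proves something slightly stronger than the paper does: you show that Markov equivalent cDGs have identical sets of virtual collider tripaths (not just maximal ones), from which equality of the maximal ones follows immediately since the condensations coincide.

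The paper takes a different route. It argues by contraposition and splits into two cases: if $(\alpha,\beta,C)$ is a maximal virtual collider tripath in $\D_1$ but not in $\D_2$, then either it fails to be a virtual collider tripath in $\D_2$ at all, or it is one but is dominated by some $(\alpha,\beta,\tilde{C})$ with $\tilde{C}$ a strict ancestor of $C$ in $\bar{\mathcal{C}}$. In the first case the paper invokes Proposition \ref{prop:noVCTsep} to produce an explicit separating set in $\D_2$ that fails to separate in $\D_1$; in the second case it observes that $(\alpha,\beta,\tilde{C})$ cannot be a virtual collider tripath in $\D_1$ (by maximality) and reduces to the first case. Thus the paper works directly at the level of $\mu$-separation rather than passing through the collider-equivalence characterization. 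Your approach is cleaner in that it leverages Theorem \ref{thm:meChar} once and then only needs the transitivity-of-ancestry observation; the paper's approach has the minor advantage of exhibiting, for any discrepancy in maximal virtual collider tripaths, a concrete separation statement on which the two graphs disagree.
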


\begin{proof}
	We show this by contraposition. If $\alpha$ is a parent of $\beta$ in 
	$\D_1$, but not in $\D_2$, then it 
	follows from Corollary \ref{cor:IDunCorr} that they are not Markov 
	equivalent. Assume 
	instead that $\D_1$ and $\D_2$ have the same directed edges, and that 
	$(\alpha,\beta,C)$ 
	is a maximal virtual collider tripath in $\D_1$, but not in $\D_2$. Then it 
	follows that $\alpha\neq \beta$ as we assume all directed loops to be 
	present in both graphs. There 
	are two cases; either $(\alpha,\beta,C)$ is not a virtual collider tripath 
	in $\D_2$, or it is not maximal. In the first case, $\beta$ is 
	$\mu$-separated from $\alpha$ by $\an(\{\alpha,\beta\} \cup C)\setminus 
	\{\alpha,\beta\}$ (Proposition \ref{prop:noVCTsep}) which is seen to not be 
	the case 
	in $\D_1$. In the second case, in $\D_2$ there is a virtual collider 
	tripath $(\alpha,\beta,\tilde{C})$ such that $\tilde{C} \rightarrow C$ in 
	$\bar{\mathcal{C}}(\D_1)$ (note that $\bar{\mathcal{C}}(\D_1) = 
	\bar{\mathcal{C}}(\D_2)$) and 
	$(\alpha,\beta,\tilde{C})$ is 
	not a virtual collider tripath in $\D_1$. Repeating the above argument, we 
	see that $\D_1$ and $\D_2$ are not Markov equivalent in this case either.
\end{proof}

The example in Figure \ref{fig:mvctExmp} shows that having the same 
directed edges and the same
maximal virtual collider 
tripaths is not a sufficient condition for Markov equivalence.

	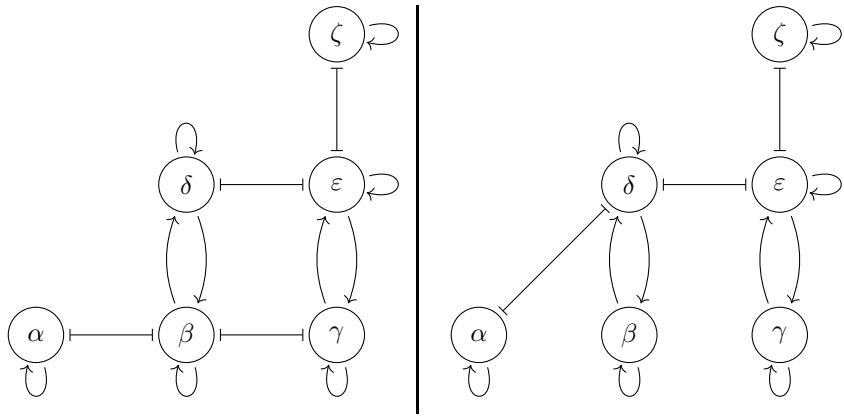
\begin{figure}
		\centering
		\begin{minipage}{.45\linewidth}
			\centering
			\begin{tikzpicture}[shorten >= 2pt, shorten <= 2pt]
			\node[shape=circle,draw=black] (a) at (0,0) {$\alpha$};
			\node[shape=circle,draw=black] (b) at (2,0) {$\beta$};
			\node[shape=circle,draw=black] (c) at (4,0) {$\gamma$};
			\node[shape=circle,draw=black] (d) at (2,2) {$\delta$};
			\node[shape=circle,draw=black] (e) at (4,2) {$\varepsilon$};
			\node[shape=circle,draw=black] (f) at (4,4) {$\zeta$};
			
			\path [->] (d) edge [bend left = 20] node {} (b);
			\path [->] (b) edge [bend left = 20] node {} (d);
			\path [->] (c) edge [bend left = 20] node {} (e);
			\path [->] (e) edge [bend left = 20] node {} (c);
			
			\path [|-|] (a) edge [bend right = 0] node {} (b);
			\path [|-|](b) edge [bend right = 0] node {} (c);
			\path [|-|] (d) edge [bend right = 0] node {} (e);
			\path [|-|](e) edge [bend right = 0] node {} (f);
			
			\draw[every loop/.append style={->}] (b) edge[loop below] 
			node {}  (b);
			\draw[every loop/.append style={->}] (a) edge[loop below] 
			node {}  (a);				
			\draw[every loop/.append style={->}] (c) edge[loop below] 
			node {}  (c);
			\draw[every loop/.append style={->}] (d) edge[loop above] 
			node {}  (d);		
			\draw[every loop/.append style={->}] (e) edge[loop right] 
			node {}  (e);
			\draw[every loop/.append style={->}] (f) edge[loop right] 
			node {}  (f);
			\end{tikzpicture}
		\end{minipage}%
		\vline \hspace{.3cm}
		\begin{minipage}{.45\linewidth}
			\centering
			\begin{tikzpicture}[shorten >= 2pt, shorten <= 2pt]
			\node[shape=circle,draw=black] (a) at (0,0) {$\alpha$};
			\node[shape=circle,draw=black] (b) at (2,0) {$\beta$};
			\node[shape=circle,draw=black] (c) at (4,0) {$\gamma$};
			\node[shape=circle,draw=black] (d) at (2,2) {$\delta$};
			\node[shape=circle,draw=black] (e) at (4,2) {$\varepsilon$};
			\node[shape=circle,draw=black] (f) at (4,4) {$\zeta$};
			
			\path [->] (d) edge [bend left = 20] node {} (b);
			\path [->] (b) edge [bend left = 20] node {} (d);
			\path [->] (c) edge [bend left = 20] node {} (e);
			\path [->] (e) edge [bend left = 20] node {} (c);
			
			\path [|-|] (a) edge [bend right = 0] node {} (d);
			\path [|-|] (d) edge [bend right = 0] node {} (e);
			\path [|-|](e) edge [bend right = 0] node {} (f);
			
			\draw[every loop/.append style={->}] (b) edge[loop below] 
			node {}  (b);
			\draw[every loop/.append style={->}] (a) edge[loop below] 
			node {}  (a);				
			\draw[every loop/.append style={->}] (c) edge[loop below] 
			node {}  (c);
			\draw[every loop/.append style={->}] (d) edge[loop above] 
			node {}  (d);		
			\draw[every loop/.append style={->}] (e) edge[loop right] 
			node {}  (e);
			\draw[every loop/.append style={->}] (f) edge[loop right] 
			node {}  (f);
			\end{tikzpicture}
		\end{minipage}
		\caption{These cDGs on nodes 
		$\{\alpha,\beta,\gamma,\delta,\varepsilon,\zeta\}$ have the same 
		maximal 
		virtual collider tripaths, however, they disagree on whether $\zeta$ is 
		$\mu$-separated from $\alpha$ by $\{\beta,\gamma,\delta,\varepsilon\}$.}
		\label{fig:mvctExmp}
	\end{figure}
	
\subsection{Complexity of deciding Markov equivalence}
\label{ssec:complexME}

We have given two characterizations of Markov equivalence of cDGs and argued 
that they both use exponentially many conditions in the worst case. In this 
section, we prove that this, most likely, cannot be circumvented.

coNP is the class of decision problems for which a no-instance can be verified 
using a polynomial-length counterexample 
in polynomial time and a problem is in coNP if and only if its complement is in 
NP. If a 
problem is as hard as any problem in coNP, then we say 
that the problem is coNP-{\it hard}. If a problem is coNP-hard and also in 
coNP, we say that it is coNP-{\it complete} \citep{Garey1979, Sipser2013}. 
Various inference 
problems in graphical models are known to be computationally hard 
\citep{meek2001, chickering2004, chandrasekaran2008, koller2009}. On 
the other hand, there exists polynomial-time algorithms for deciding Markov 
equivalence in several classes of graphs, e.g., maximal ancestral graphs 
\citep{ali2009} and DGs under $d$-separation \citep{richardson1997}. This is 
different in cDGs under $\mu$-separation.

\begin{thm}
	Deciding Markov equivalence of cDGs is coNP-complete.
	\label{thm:meCoNP}
\end{thm}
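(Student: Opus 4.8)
The plan is to prove both halves of completeness separately: membership in coNP, and coNP-hardness of the complement.

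\textbf{Membership in coNP.} I would exhibit a polynomial-time verifier for the \emph{complement} problem (non-equivalence). By Corollary \ref{cor:IDunCorr} together with Theorem \ref{thm:ancME}, two cDGs $\D_1,\D_2$ (containing every loop) fail to be Markov equivalent exactly when either their directed parts differ, or their directed parts agree yet there is an ancestral set $A$ and nodes $\alpha,\beta\in A$ whose collider-connection status differs in $(\D_1)_A$ and $(\D_2)_A$. The first alternative is decided directly in polynomial time by comparing $\mathbb{D}(\D_1)$ and $\mathbb{D}(\D_2)$. For the second, a no-certificate is simply the pair $\bigl(A,\{\alpha,\beta\}\bigr)$, which has size polynomial in $|V|$; one verifies in polynomial time that $A=\an(A)$, that $\alpha,\beta\in A$, and that $\alpha,\beta$ are collider connected in exactly one of $(\D_1)_A,(\D_2)_A$, using the characterization (noted before Algorithm \ref{alg:ancME}) that $\alpha$ and $\beta$ are collider connected iff some blunt component has both of them among its parents. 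Hence non-equivalence lies in NP and Markov equivalence lies in coNP.

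\textbf{coNP-hardness.} It suffices to show the complement is NP-hard, and I would do this by a polynomial reduction from $3$-SAT. Given a formula $\phi$, the goal is to build, in polynomial time, two cDGs $\D_1,\D_2$ on a common node set with \emph{identical} directed edges and differing only in their blunt parts, so that $\D_1,\D_2$ are \emph{not} Markov equivalent iff $\phi$ is satisfiable. By Proposition \ref{prop:condAnc} the ancestral sets of a cDG are exactly the downward-closed sets of its condensation, and I would exploit this to make ancestral sets encode truth assignments: each variable $x_i$ receives a gadget of strongly connected components whose inclusion in $A$ records the chosen literal, with the directed (condensation) edges enforcing consistency. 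Clause gadgets are then wired into the blunt structure so that, for a designated read-out pair, the collider-connection relation on $(\D_1)_A$ and $(\D_2)_A$ coincides for every $A$ that encodes a falsifying assignment and diverges precisely for those $A$ encoding a satisfying assignment. Appealing to Theorem \ref{thm:ancME} (equivalently to the collider-equivalence criterion of Theorem \ref{thm:meChar} via Lemma \ref{lem:notCEthenNotME}), a satisfying assignment then produces an ancestral set witnessing non-equivalence, and conversely any witnessing ancestral set yields a satisfying assignment; this establishes the reduction.

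\textbf{Main obstacle.} The crux is the gadget construction and its correctness proof. Collider connectivity is governed jointly by the blunt components and by parent/ancestor relations---indeed directed edges alone generate collider connections through the mandatory loops---so modifying the blunt part can change the connection status of \emph{many} pairs at once through shared parents, rather than of one intended pair in isolation. The reduction must therefore engineer variable gadgets that force globally consistent literal choices, clause gadgets whose blunt routing is severed exactly on satisfaction, and a read-out mechanism, while simultaneously ruling out \emph{spurious} divergences on ancestral sets that do not correspond to valid satisfying assignments and spurious reconnections that would mask a genuine witness. Proving that this correspondence is exact---that no unintended collider connection appears or disappears on any downward-closed $A$---is the hard step; once it is in place, the polynomial size of the construction and the certificate bound for membership are routine.
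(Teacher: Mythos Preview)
Your membership argument is correct and essentially the same as the paper's: the paper certifies non-equivalence by a separating triple $(A,B,C)$ verified via the augmentation criterion (Proposition~\ref{prop:augCrit}), while you certify by an ancestral set and a pair of nodes with differing collider-connection status; these are interchangeable polynomial-time certificates.

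For hardness, however, you have only outlined a strategy and explicitly flagged the gadget construction as unfinished. That is the entire content of the hardness proof, so what you have is a plan, not a proof. Your dual formulation (reduce $3$-SAT to non-equivalence rather than a coNP-complete problem to equivalence) is fine in principle, but your idea of encoding assignments by choosing which strongly connected components to include in an ancestral set would need substantial work: ancestral sets are downward-closed, so you cannot freely include one SCC and exclude a sibling unless the condensation has no order relation between them, and you must simultaneously prevent ``partial'' ancestral sets from producing spurious witnesses.

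For comparison, the paper reduces from $3$DNF tautology directly to Markov equivalence. Given a $3$DNF $H$ on variables $x_1,\dots,x_n$, it builds a cDG $\D$ with nodes $\alpha,\beta$, a node $\zeta_i^j$ for each literal occurrence, and a pair $\chi_l,\upsilon_l$ for each variable and its negation; the $\chi_l,\upsilon_l$ are chained to $\beta$ by blunt edges, each term contributes a short blunt path from $\alpha$, and directed edges $\chi_l\leftrightarrows\zeta_i^j$ (or $\upsilon_l\leftrightarrows\zeta_i^j$) encode which variable a literal refers to. The second graph $\D^+$ adds just the two blunt edges $\alpha\unEdge\chi_1$ and $\alpha\unEdge\upsilon_1$. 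One then shows, via Theorem~\ref{thm:meChar}, that $\D$ and $\D^+$ are Markov equivalent iff every collider path in $\D^+$ through one of the new edges is covered in $\D$, and this holds for the $\alpha$--$\beta$ paths iff every truth assignment (encoded by which of $\chi_l,\upsilon_l$ lie on the path) makes some term true. The auxiliary nodes $\gamma_\delta$ absorb all the short collider paths not ending at $\beta$, so only the $\alpha$--$\beta$ paths matter. This is exactly the kind of explicit gadget and exhaustive case analysis your proposal is missing.
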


The complexity result implies that, unless P = coNP (which is commonly believed 
to not be the case), one cannot find a 
characterization of Markov equivalence of cDGs which can be 
verified in polynomial time 
in the size of the graph as this would allow us to decide Markov equivalence 
of two cDGs.

\begin{proof}
	We first show that deciding Markov equivalence is in coNP. This is clear as 
	given two graphs that are not Markov equivalent and a certificate 
	indicating sets $A$,$B$, and $C$ such that we have separation in one but 
	not in the 
	other, we can use Proposition \ref{prop:augCrit} (Appendix \ref{app:aug}) 
	to verify this no-instance 
	in polynomial time.
	
	In order to show that deciding Markov equivalence is coNP-hard, we use a 
	reduction similar to one by \cite{bohler2012} who 
	study 
	complexity of deciding equivalence of Boolean circuits, see in particular 
	the proof of their Lemma 4.3. Consider Boolean 
	variables 
	$x_1,\ldots,x_n$. We say that $x_l$ and $\neg x_l$ are {\it literals}. A 
	Boolean formula is in {\it disjunctive normal form} (DNF) if it is a 
	disjunction of conjuctions of literals. It is a 3DNF, if each conjunction 
	has at most three literals. The 3DNF tautology is the problem of deciding 
	if 
	a 
	3DNF is satisfied for all inputs and this problem is known to be 
	coNP-hard. We reduce 3DNF tautology to the problem 
	of deciding Markov equivalence. Let $H$ be a 3DNF formula on 
	variables $x_1,\ldots,x_n$ consisting of literals
	
	$$
	H = (z_1^1 \wedge z_2^1 \wedge z_3^1) \vee \ldots \vee (z_1^N \wedge 
	z_2^N 
	\wedge z_3^N)
	$$
	
	\noindent such that $z_i^j$ equals $x_l$ or $\neg x_l$ for some $l = 
	1,\ldots,n$. In the former case, we say that $z_i^j$ is a {\it positive} 
	literal, and in the latter that $z_i^j$ is a {\it negative} literal. We say 
	that a conjunction, e.g., $z_1^j \wedge z_2^j \wedge	 z_3^j $, is a {\it 
	term}. 
	In the following, we 
	will define graphs in which the nodes correspond to literals, variables, 
	and negated variables in this problem. We will use Greek alphabet letters 
	for the nodes. Now 
	define 
	
	$$
	V^- = \{\zeta_i^j\} \cup \{\chi_l, \upsilon_l\},
	$$
	
	\noindent such that $\zeta_i^j$ corresponds to $z_i^j$, $\chi_l$ to $x_l$, 
	and $\upsilon_l$ to the negation of $x_l$. We also define
	
	$$
	V = \{\alpha,\beta\} \cup V^- \cup \{\gamma_\delta: \delta \in V^-\}.
	$$
	
	\noindent We construct a cDG on nodes $V$ with the following edge set. We 
	use $\rho_1 \leftrightarrows \rho_2$ to denote that 
	$\rho_1\rightarrow\rho_2$ and $\rho_1\leftarrow\rho_2$. We use 
	$\rho_1,\ldots,\rho_k \unEdge \rho_{k+1},\ldots, \rho_{k+m}$ to denote that 
	there is a blunt edge between any pair $\delta_1 \in 
	\{\rho_1,\ldots,\rho_k\}$ and $\delta_2 \in \{\rho_{k+1},\ldots,
	\rho_{k+m}\}$. Every node 
	has a directed loop. Furthermore, for $\delta\in V^-$,
	
	$$
	\alpha \rightarrow \gamma_\delta \leftrightarrows \delta.
	$$
	
	\noindent For every term (analogously if the term has fewer than three 
	literals),
	
	$$
	\alpha \rightarrow \zeta_1^j \unEdge \zeta_2^j \unEdge \zeta_3^j \unEdge 
	\chi_1
	$$
	
	\noindent and also $\zeta_3^j \unEdge \upsilon_1$. Furthermore, 
	$\chi_l, \upsilon_l 
	\unEdge 
	\chi_{l+1}, \upsilon_{l+1}$, $l = 1,\ldots,n-1$, and $\chi_n,\upsilon_n 
	\unEdge \beta$. We also include $\chi_1 \unEdge \upsilon_1$. Finally, 
	$\chi_l 
	\leftrightarrows \zeta_i^j$ if and only if $z_i^j$ is a
	positive 
	literal of the variable $x_l$ and $\upsilon_l 
	\leftrightarrows 
	\zeta_i^j$ if and only if $z_i^j$ is a negative literal of the variable 
	$x_l$. 
	We let $\D$ denote the cDG on nodes $V$ and with edges as described above. 
	We also define $\D^+$ by adding edges $\alpha\unEdge \chi_1, \upsilon_1$ to 
	$\D$.
	
	We now argue that $H$ is a tautology (that is, true for all inputs) if and 
	only if $\D$ and $\D^+$ are Markov equivalent. Assume that $H$ is a 
	tautology. To argue that $\D$ and $\D^+$ are Markov equivalent it suffices 
	to show that every 
	collider path of $\D^+$ is covered in $\D$ (Theorem \ref{thm:meChar}). 
	Every collider 
	path in $\D^+$ which is not in $\D$ either contains the subpath $\chi_1 
	\unEdge \alpha \unEdge \upsilon_1$ or is of the below form. If it contains 
	$\chi_1 \unEdge \alpha \unEdge \upsilon_1$, then we can substitute this for 
	$\chi_1 \unEdge \upsilon_1$ and obtain a covering path in $\D$. Assume 
	instead a collider path in $\D^+$ of the following form,
	
	$$
	\alpha \unEdge \varepsilon_1 \unEdge \ldots \sim \varepsilon_{k+1}.
	$$
	
	\noindent If $\varepsilon_{k+1}\neq \beta$, then this is covered in $\D$ by 
	$\alpha\rightarrow 
	\gamma_{\varepsilon_{k+1}} \leftrightarrows
	\varepsilon_{k+1}$, or 
	by $\alpha \rightarrow \varepsilon_{k+1}$. 
	Assume instead that $\varepsilon_{k+1} = \beta$. In this case, for all 
	$i=1\ldots,n$ either $\chi_i\in \{\varepsilon_1\ldots,\varepsilon_k\} $ or 
	$\upsilon_i\in \{\varepsilon_1\ldots,\varepsilon_k\}$. Consider now the 
	following 
	assignment of truth values to the variables: $x_l = 1$ if and only if 
	$\chi_l\in \{\varepsilon_1\ldots,\varepsilon_k\} $. By assumption, $H$ is a 
	tautology, so there is a term which equals $1$ for this assignment, say the 
	$j$'th (without loss of generality assuming the the $j$'th term contains 
	three literals),
	
	$$
	z_1^j \wedge z_2^j \wedge z_3^j.
	$$
	
	\noindent If $z_i^j$ is a positive literal, then it must correspond to a 
	$x_l$ such that $\chi_l \in \{\varepsilon_1\ldots,\varepsilon_k\}$, and 
	then in $\D$, 
	$\zeta_i^j$ is a 
	parent of  $\chi_l\in 
	\{\varepsilon_1\ldots,\varepsilon_k\}$. If it is a negative literal, then 
	it must correspond to $x_l$ such that $\chi_l\notin 
	\{\varepsilon_1\ldots,\varepsilon_k\}$. 
	Then $\upsilon_l \in \{\varepsilon_1\ldots,\varepsilon_k\}$, and therefore 
	$\zeta_i^j$ 
	is a parent of $\{\varepsilon_1\ldots,\varepsilon_k\}$. This means that the 
	walk
	
	$$
	\alpha \rightarrow \zeta_1^j  \unEdge \zeta_2^j  \unEdge \zeta_3^j \unEdge 
	\phi_1 
	\unEdge \ldots \phi_n \unEdge \beta,
	$$ 
	
	\noindent where $\phi_l = \chi_l$ if $\chi_l \in 
	\{\varepsilon_1\ldots,\varepsilon_k\}$ and $\phi_l = \upsilon_l\in 
	\{\varepsilon_1\ldots,\varepsilon_k\}$ else, is a covering path in $\D$. 
	This 
	implies that $\D$ and $\D^+$ are Markov equivalent.
	
	On the other hand, assume that $H$ is not a tautology. In this case, there 
	exists some assignment of truth values such that every term of $H$ is $0$, 
	and let 
	$I$ denote this assignment. We now define the following subset of nodes,
	
	$$
	C = \{ \chi_l: x_l = 1 \text{ in } I\} \cup \{ \upsilon_l: 
	x_l = 0 \text{ in } 
	I\}. 
	$$
	
	\noindent We see that for all $l=1,\ldots,n$, either $\chi_l \in C$ or 
	$\upsilon_l 
	\in C$, and this means that $\beta$ is not $\mu$-separated from $\alpha$ by 
	$C$ in $\D^+$. If we consider a term (again, without loss of generality 
	assuming that the term has three literals),
	
	$$
	z_1^j \wedge z_2^j \wedge z_3^j.
	$$
	
	\noindent we know that (under assignment $I$) one of them must equal $0$, 
	say $z_i^j$. If it is a 
	positive literal, then the corresponding variable equals $0$ in the 
	assignment and $\zeta_i^j$ is not an ancestor of $C$. If it is a negative 
	literal, then the 
	corresponding variable $x_l$ equals $1$ in the assignment, and therefore 
	$\upsilon_l$ is not in $C$, and $\zeta_i^j$ is not an ancestor of $C$. 
	In 
	either case, we see that every path
	
	$$
	\alpha \rightarrow \zeta_1^j \unEdge \zeta_2^j \unEdge \zeta_3^j \unEdge 
	\phi_1
	$$
	
	\noindent such that $\phi_1 \in \{\chi_1,\upsilon_1\}$ contains a 
	nonendpoint node which is not an ancestor of $C$. This 
	implies that the collider path in $\D^+$ between $\alpha$ and $\beta$ which 
	traverses exactly the nodes in $C$ is not covered in $\D$ and therefore 
	$\D$ and $\D^+$ are not Markov equivalent 
	(Theorem \ref{thm:meChar}).
	
	The reduction from 3DNF tautology to the Markov equivalence problem is 
	clearly done 
	in polynomial time and is a many-one reduction.
\end{proof}

\section{Conclusion}

We have studied graphs that represent independence structures in stochastic 
processes that are driven by correlated error processes. We have characterized 
their equivalence classes in two ways and proven that deciding equivalence is 
coNP-complete. The characterizations of Markov equivalence do, however, suggest 
subclasses of cDGs in which deciding Markov equivalence is feasible, e.g., in 
cDGs with blunt components of bounded size, or in cDGs such that the length of 
the
shortest blunt path between two nodes is bounded. 

We have also shown a global Markov property in the case of 
Ornstein-Uhlenbeck processes driven by correlated Brownian motions. It is an 
open question if and how this can be extended to other or larger classes 
of continuous-time stochastic processes.

	
	

\begin{appendix}

\section{Augmentation criterion for $\mu$-separation in cDGs}
\label{app:aug}

In this appendix, we argue that we can decide $\mu$-separation in a cDG by 
considering an {\it augmented graph}, a generalization of a so-called {\it 
	moral graph} \citep{cowell1999}. \citet{richardson2002, richardson2003} use 
	a 
similar approach to decide $m$-separation in {\it ancestral graphs} and {\it 
	acyclic directed mixed graphs} ($m$-separation is defined in Definition 
	\ref{def:mSep} in Appendix \ref{app:proofMarkov}).  \citet{didelez2000} 
	uses 
	a moral graph to 
decide 
$\delta$-separation in DGs. 

An {\it undirected graph} is a graph such that every edge is {\it undirected}, 
$\alpha - \beta$. The augmented graph of a cDG is 
the undirected graph where all collider connected pairs of nodes are adjacent 
(omitting loops). Given an undirected graph and three disjoint subsets of nodes 
$A$, 
$B$, and $C$, we say that $A$ and $B$ are {\it separated} by $C$ if every path 
between $\alpha\in A$ and $\beta\in B$ intersects $C$.

\begin{prop}[Augmentation criterion for $\mu$-separation]
	Let $\D=(V,E)$ be a cDG. Let $A,B,C 
	\subseteq V$, and assume that $B = 
	\{\beta_1,\ldots, \beta_j\}$. Let $B^p = \{\beta_1^p, \ldots, \beta_j^p\}$ 
	and define the graph $\D(B)$ with node set 
	$V\disjU B^p$ such that $\mathcal{D}_V = \D$ and
	
	$$
	\alpha \rightarrow_{\D(B)} \beta_i^p \text{ if } \alpha \rightarrow_{\D} 
	\beta_i \text{ and } \alpha\in V, \beta_i\in B.
	$$
	
	\noindent Then $\musepG{A}{B}{C}{\D}$ if and only if $A\setminus C$ and 
	$B^p$ 
	are separated 
	by $C$ in 
	the augmented graph of $\D(B)_{\an(A\cup B^p\cup C)}$. 
	\label{prop:augCrit}
\end{prop}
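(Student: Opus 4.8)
The plan is to prove the contrapositive equivalence: there is a $\mu$-connecting walk from some $\alpha \in A$ to some $\beta \in B$ given $C$ in $\D$ if and only if there is an undirected path from $A \setminus C$ to $B^p$ in the augmented graph of $G := \D(B)_{\an(A \cup B^p \cup C)}$ that avoids $C$. The first ingredient handles the head-asymmetry of $\mu$-separation using the copies $B^p$. Each $\beta_i^p$ is a \emph{pure sink}: its only incident edges are $\delta \rightarrow \beta_i^p$ for $\delta \in \pa_\D(\beta_i)$, it has no blunt edges, and its only descendant is itself. Hence $\beta_i^p$ can appear on a walk only as an endpoint, and the only way to arrive at it is along an edge with a head at $\beta_i^p$. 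I would show that a walk ending $\ldots \gamma \rightarrow \beta_i$ is $\mu$-connecting to $\beta_i$ in $\D$ if and only if the walk obtained by replacing the last edge with $\gamma \rightarrow \beta_i^p$ is $\mu$-connecting to $\beta_i^p$ in $\D(B)$: the terminal edge has a tail at $\gamma$ either way, so every node keeps its collider/noncollider status, while the mandatory head at the target becomes automatic. This converts the asymmetric ``head at $B$'' condition into ordinary reachability of the sinks $B^p$, and leaves the source condition $\alpha \notin C$ intact, so the source lies in $A \setminus C$.

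Next I would reduce to the ancestral subgraph $G$. Since every collider of a $\mu$-connecting walk lies in $\an(C)$, a standard shortest-walk argument shows that whenever a $\mu$-connecting walk from $\alpha$ to $\beta_i^p$ given $C$ exists in $\D(B)$, one exists all of whose nodes lie in $\an(A \cup B^p \cup C)$, that is, inside $G$; the converse inclusion is immediate because $G$ is ancestral and ancestry in $G$ agrees with ancestry in $\D(B)$. For the direction \emph{from connection to an augmented path}, I would decompose such a walk in $G$ into its maximal collider segments. The junction nodes between consecutive segments are noncolliders, hence outside $C$, and the two extreme endpoints are $\alpha \in A \setminus C$ and $\beta_i^p \in B^p$. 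Each collider segment makes its two junctions collider connected in $G$, so they are adjacent in the augmented graph; the junctions therefore trace a path from $A \setminus C$ to $B^p$ avoiding $C$.

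For the \emph{converse} direction, take an augmented path $u_0 - \cdots - u_t$ with $u_0 \in A \setminus C$, $u_t \in B^p$, and no $u_i \in C$. Each edge $u_i - u_{i+1}$ unpacks into a collider path in $G$, and concatenating them yields a collider-type walk from $u_0$ to $\beta_i^p$ whose interior colliders all lie in $\an(A \cup B^p \cup C)$. I would then upgrade this to a genuine $\mu$-connecting walk (all colliders in $\an(C)$, no noncollider in $C$) by applying Proposition \ref{prop:mConnChangeAn} segment-by-segment, rerouting any collider that happens to be an ancestor of $A$ or of $B^p$ rather than of $C$ down a directed path to the relevant endpoint. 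Translating back through the sink construction and the ancestral reduction then produces a $\mu$-connecting walk in $\D$ itself, which closes the equivalence.

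The main obstacle is precisely this last reconstruction. The augmented graph deliberately discards the interior colliders of each segment, and these are only known to satisfy $\gamma \in \an(A \cup B^p \cup C)$, whereas $\mu$-connection demands $\gamma \in \an(C)$; moreover a node that was an innocuous junction in the augmented path may turn into a collider once the segments are concatenated. Reconciling both effects at once --- activating colliders through Proposition \ref{prop:mConnChangeAn} while rerouting the colliders that are ancestors of $A$ or $B^p$ toward the appropriate endpoint without ever placing a noncollider into $C$ --- is the delicate point, and it is exactly where the restriction to $\an(A \cup B^p \cup C)$ and the pure-sink structure of $B^p$ both do essential work.
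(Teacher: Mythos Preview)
Your two-step plan matches the paper's structure exactly: first translate $\mu$-separation of $A$ from $B$ given $C$ in $\D$ into $m$-separation of $A\setminus C$ from $B^p$ given $C$ in $\D(B)$ via the pure-sink construction, then invoke the augmented-graph criterion for $m$-separation. The paper does not prove either step; it cites Propositions~D.2 and~D.4 of \cite{Mogensen2020a} for the first and Theorem~1 of \cite{richardson2003} for the second, noting that Richardson's argument goes through for cyclic graphs. Your proposal is therefore a self-contained rederivation of those cited results, and the forward direction as well as the sink reduction are fine.

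The gap is in your converse direction. You propose to ``apply Proposition~\ref{prop:mConnChangeAn} segment-by-segment'', but its hypothesis fails for an individual segment: the colliders on the collider path from $u_i$ to $u_{i+1}$ are only known to lie in $\an(A\cup B^p\cup C)$, not in $\an(\{u_i,u_{i+1}\}\cup C)$. The rerouting you then describe --- sending a collider that is an ancestor of $A$ or $B^p$ along a directed path to the relevant endpoint --- is the right idea, but it does not preserve the segment endpoints $u_i,u_{i+1}$, so it cannot be packaged as a local per-segment fix. Richardson's actual argument processes the concatenated walk globally: traverse from the $A$-end, and whenever a collider $\gamma\notin\an(C)$ is met, either restart the whole walk from the element of $A$ that $\gamma$ reaches (if $\gamma\in\an(A)$, the directed path to $A$ avoids $C$), or terminate early in $B^p$ (if $\gamma\in\an(B^p)$). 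This also handles your second worry about junction nodes changing collider status after concatenation. So your ingredients are correct, but the assembly has to be global rather than segment-by-segment, and Proposition~\ref{prop:mConnChangeAn} as stated is not the tool that delivers it.
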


\begin{proof}
	The proofs of Propositions D.2 and D.4 by \citet{Mogensen2020a} give the 
	result. First one shows that $\musepG{A}{B}{C}{\D}$ holds if and only 
	if $\msepG{A\setminus C}{B^p}{C}{\D(B)}$ holds ($m$-separation is found in 
	Definition \ref{def:mSep} in Appendix \ref{app:proofMarkov}). The second 
	statement is then 
	shown to be 
	equivalent to separation in the relevant augmented graph using Theorem 1 
	in 
	\cite{richardson2003}. \citet{richardson2003} 
	studies acyclic graphs, however, the proof also applies to cyclic graphs as 
	noted in the paper. 
\end{proof}

	
	\section{Proof of Theorem \ref{thm:globalMarkov}}

\label{app:proofMarkov}

We assume $X$ is a regular Ornstein-Uhlenbeck process (see Example 
\ref{exmp:ou}) with drift

$$
\lambda(x) = M(x-\mu)
$$

\noindent and diffusion matrix $\sigma$ and let $\Sigma=\sigma\sigma^T$. We let 
$a = 
-M\mu$. Let $V = U \disjU W$. We will use the following notation similar to 
 that of \cite{liptser1977},
 
 \begin{align}
 s \circ s & = \sigma_{UU}\sigma_{UU}^T + \sigma_{UW}\sigma_{UW}^T \\
 s \circ S & = \sigma_{UU}\sigma_{WU}^T + \sigma_{UW}\sigma_{WW}^T \\
 S \circ S & = \sigma_{WU}\sigma_{WU}^T + \sigma_{WW}\sigma_{WW}^T
 \end{align}
 
 \noindent Note that the above matrices are simply the block components of 
 $\Sigma=\sigma\sigma^T$,
 
 \begin{align}
 \Sigma  = \begin{bmatrix}
 \sigma_{UU} & \sigma_{UW} \\[.1cm]
 \sigma_{WU} & \sigma_{WW}
 \end{bmatrix}
 \begin{bmatrix}
 \sigma_{UU}^T & \sigma_{WU}^T \\[.1cm]
 \sigma_{UW}^T & \sigma_{WW}^T
 \end{bmatrix} =
 \begin{bmatrix}
 s \circ s & s \circ S  \\[.1cm]
 (s \circ S)^T & S \circ S
 \end{bmatrix}.
 \end{align}
 
 We let $m_t$ denote $$E\left(X_t^U \mid \mathcal{F}_t^W\right).$$ The 
 following 
 integral 
 equation holds \cite[Theorem 
 10.3]{liptser1977},
 
 \begin{align}
 m_t &= m_0 + \int_0^t a_U + M_{UU}m_s + M_{UW}X_s^W \md s  \\ &+ \int_0^t 
 (s 
 \circ S + 
 \gamma_s 
 M_{WU}^T) (S\circ S)^{-1}(\md X_s^W - (a_W + M_{WU}m_s + M_{WW}X_s^W)\md 
 s)
 \end{align}

 \noindent where $m_0 = \E{X_0^U}{\mathcal{F}_0^W}$ and $\gamma_t$ is the 
 solution of a differential equation given below. We can write this as
 
 \begin{align*}
 m_t = m_0 + \int_0^t a_U + (M_{UU} + (s \circ S + \gamma_s 
 M_{WU}^T) (S\circ S)^{-1}M_{WU})m_s + M_{UW}X_s^W \md s \\ + \int_0^t (s \circ 
 S + 
 \gamma_s 
 M_{WU}^T) (S\circ S)^{-1}(\md X_s^W - (a_W + M_{WW}X_s^W) \md s).
 \end{align*}

 The process $\gamma(t)$ is given by the following equation \cite[Theorem 
 10.3]{liptser1977}. 
 
 \begin{align}
 \dot{\gamma}(t) &= M_{UU}\gamma(t) + \gamma(t)M_{UU}^T + 
 s \circ s \\
 &-\left(s \circ S + 
 \gamma(t)M_{WU}^T \right) [S\circ S]^{-1}\left(s \circ S + \gamma(t) 
 M_{WU}^T \right)^T \\
 & = (M_{UU} - (s\circ S)[S\circ S]^{-1}M_{WU})\gamma(t) + 
 \gamma(t)(M_{UU}^T - M_{WU}^T[S\circ S]^{-1}(s\circ S)^T ) 
 \\
 &+ 
 s \circ s - (s\circ S)[S\circ S]^{-1}(s\circ S)^T - 
 \gamma(t)M_{WU}^T[S\circ S]^{-1}M_{WU}\gamma(t)
 \end{align}

 \noindent with initial condition $\gamma_0 = \mathrm{E}[(X_0^U - m_0)(X_0^U - 
 m_0)^T]$. This is known as a 
 {\it 
 differential Riccati equation}. The solution 
 of these 
 equations is unique when we restrict our attention to solutions such that 
 $\gamma_t$ is symmetric and nonnegative definite \cite[Theorem 
 10.3]{liptser1977}. Essentially, we will show the global Markov property by 
 arguing about the measurability of $m_t$, using the sparsity of the 
 matrices that go into the integral equation. We will achieve this by first 
 describing the sparsity in the solution of an associated {\it algebraic} 
 Riccati equation and this will allow us to describe the sparsity in the 
 solution of the differential Riccati equation.
 
For ease of notation, we define matrices

\begin{align}
D &= M_{UU}^T - M_{WU}^T[S\circ S]^{-1}(s\circ S)^T,  \\
E &= M_{WU}^T[S\circ S]^{-1}M_{WU}, \\
F &= s \circ s - (s\circ S)[S\circ S]^{-1}(s\circ S)^T,
\end{align}

\noindent and this allows us to write the equation as

$$
\dot{\gamma}(t) = \gamma(t)D + D^T\gamma(t) - \gamma(t)E\gamma(t) + F.
$$

Note that $F$ is the Schur complement of $S\circ S$ in $\Sigma$. The 
matrix $\Sigma$ is positive definite by assumption, and therefore so are 
$F$ \citep[p. 472]{horn1985} and $S\circ S$.

\subsection{Sparsity of the solution of the algebraic Riccati equation}

In order to solve the differential Riccati equation, we will first solve an 
algebraic Riccati equation (Equation (\ref{eq:algRic})) --- or rather argue 
that 
its solution has a certain 
sparsity structure.

\begin{align}
0 = \Gamma D + D^T \Gamma - \Gamma E \Gamma + F
\label{eq:algRic}
\end{align}

The concept of $\mu$-separation 
is similar to that of $m$-separation \cite{spirtes1998, koster1999, 
	richardson2003}
which has been used 
in acyclic graphs.

\begin{defn}[$m$-separation]
	In a graph and for disjoint node sets $A$, $B$, and $C$, we 
	say that $A$ and $B$ are \emph{$m$-separated} given $C$ (and write 
	$\msep{A}{B}{C}$) if there is no path 
	between any $\alpha\in A$ and any $\beta\in B$ such that every collider is 
	in 
	$\an(C)$ and no noncollider is in $C$.
	\label{def:mSep}
\end{defn}

$m$-separation is, in contrast to 
$\mu$-separation, a symmetric notion of separation in the sense that if $B$ is 
$m$-separated from $A$ given $C$, then $A$ is also $m$-separated from $B$ given 
$C$. We will use $m$-separation as a technical tool in our study of cDGs as 
some statements are more easily expressed using this symmetric notion.

In the following proposition and its proof, we write $A \rightharpoondown B 
\mid C$ if 
there exists 
$\alpha\in A$ and $\beta\in B$ such that there is walk between $\alpha$ and 
$\beta$ with every collider in $\an(C)$ and no noncollider in $C$ and 
furthermore there is a neck on the final edge at 
$\beta$. 

\begin{prop}
	Consider a regular Ornstein-Uhlenbeck process. Assume $V = U \disjU W$ and 
	$W = A \disjU C$ and define
	
		\begin{align*}
		V_1 & = \{u\in U : 
		\msep{u}{A}{C}\}, \\
		V_2 & = \{u \in U: \msep{u}{V_1}{A\cup C},\ \notmsep{u}{A}{C} \}, \\
		V_3 & = \{u\in U: \notmsep{u}{V_1}{A\cup C},\ \notmsep{u}{A}{C} 
		\}, \\
		V_4 & = \{w \in W: V_1 \rightharpoondown w \mid W \}, \\
		V_5 & = \{w \in W: V_2 \rightharpoondown w \mid W \}, \\
		V_6 & = W \setminus (V_4 \cup V_5).
		\end{align*}
		
	\noindent If $B$ is $\mu$-separated from $A$ given $C$ in the canonical 
	local 
	independence 
	graph, $\D$, then
	$U = \bar{V_1}\disjU \bar{V_2} \disjU \bar{V_3}, W = V_4 \disjU 
	V_5\disjU V_6$, $\pa_\D(B)\setminus (A\cup C) \subseteq 
	V_1$, and furthermore after a reordering of the rows and 
	columns such that the order is consistent with $V_1, \ldots, V_6$, we have 
	the following sparsity of the matrices $M$ and $\Sigma$,
	
	\begin{align*}
	M & = \begin{bmatrix}
	M_{11} & 0 & 0 & M_{14} & M_{15}  & M_{16} \\
	0 & M_{22} & 0 & M_{24} & M_{25}  & M_{26} \\
	M_{31} & M_{32} & M_{33} & M_{34} & M_{35}  & M_{36} \\
	M_{41} & 0 & 0 & M_{44} & M_{45}  & M_{46} \\ 
	0 & M_{52} & 0 & M_{54} & M_{55}  & M_{56} \\ 
	0 & 0 & 0 & M_{64} & M_{65}  & M_{66} \\ 
	\end{bmatrix},  \ \ \ \\
	\Sigma=\sigma\sigma^T & = \begin{bmatrix}
	\Sigma_{11} & 0 & \Sigma_{13} & \Sigma_{14} & 0  & 
	0 \\
	0 & \Sigma_{22} &  \Sigma_{23} & 0 & \Sigma_{25}  & 
	0 \\
	\Sigma_{31} & \Sigma_{32} & \Sigma_{33} & \Sigma_{34} & \Sigma_{35}  & 
	\Sigma_{36} \\
	\Sigma_{41} & 0 & \Sigma_{43} & \Sigma_{44} & 0  & 0 \\ 
	0 & \Sigma_{52} & \Sigma_{53} & 0 & \Sigma_{55}  & 0 \\ 
	0 & 0 & \Sigma_{63} & 0 & 0  & 
	\Sigma_{66} \\ 
	\end{bmatrix}.
	\end{align*}
	
	\label{prop:sparsAlgRic}
\end{prop}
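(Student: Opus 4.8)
The plan is to read every asserted zero through Definition~\ref{def:CLIG}: a zero block of $M$ at position $(i,j)$ says no node of $V_j$ is a parent of a node of $V_i$ (as $M_{\beta\alpha}\neq 0\iff\alpha\rightarrow_\D\beta$), while a zero block of $\Sigma$ at $(i,j)$ says no blunt edge joins $V_i$ and $V_j$ (as $\Sigma_{\alpha\beta}\neq 0\iff\alpha\unEdge_\D\beta$ for $\alpha\neq\beta$). I would first settle the partitions. That $V_1,V_2,V_3$ partition $U$ is immediate from their definitions, which exhaust the case split ``$\msep{u}{A}{C}$ or not'' and, in the negative case, ``$\msep{u}{V_1}{A\cup C}$ or not''. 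For $W=V_4\disjU V_5\disjU V_6$ the only content is $V_4\cap V_5=\emptyset$: were some $w$ in both, splicing its two $\rightharpoondown$-witnesses (one from $V_1$, one from $V_2$, each open given $W$ and each with a neck at $w$) makes $w$ a collider on the resulting walk, and since $w\in W\subseteq\an(W)$ this is an $m$-connecting $V_1$--$V_2$ walk given $A\cup C$, contradicting $\msep{V_2}{V_1}{A\cup C}$; I reduce connecting walks to connecting paths in the usual way.

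The hypothesis $\musep{A}{B}{C}$ enters only through the claim $\pa_\D(B)\setminus(A\cup C)\subseteq V_1$. If $\delta\rightarrow_\D\beta$ for some $\beta\in B$ with $\delta\in U\setminus V_1$, then $\notmsep{\delta}{A}{C}$, so there is an $m$-connecting walk from some $a\in A$ to $\delta$ given $C$; appending $\delta\rightarrow\beta$ leaves $\delta$ a noncollider outside $C$ and ends with a head at $\beta$, producing a $\mu$-connecting walk from $A$ to $B$ given $C$ -- a contradiction, so $\delta\in V_1$.

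The bulk of the work is the block-by-block verification, which I would organise around three recurring mechanisms, each contradicting one defining separation. First, non-adjacency: any edge between $V_1$ and $V_2$ is by itself an $m$-connecting path given $A\cup C$, contradicting $\msep{V_2}{V_1}{A\cup C}$ and killing the $(1,2)$ and $(2,1)$ blocks of both matrices. Second, inheritance along an out-edge of $V_3$: if $v_3\rightarrow y$ with $v_3\in V_3$, then attaching $v_3$'s $A$-witness (open given $C$) and its $V_1$-witness (open given $W$) at $v_3$, where the edge $v_3\rightarrow y$ contributes a tail at $v_3$ so that $v_3$ is a noncollider outside the conditioning set, transports both connectivities to $y$; for $y\in U$ this forces $y\in V_3$ (eliminating $M_{13}$ and $M_{23}$), and for $y\in W$ the $V_1$-witness followed by $v_3\rightarrow y$ shows $V_1\rightharpoondown y\mid W$, i.e.\ $y\in V_4$, which contradicts $y\in V_5$ or $y\in V_6$ (eliminating $M_{53}$ and $M_{63}$). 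Third, reachability and bridging: an edge with a neck into a $V_6$ node coming from $V_1$ or $V_2$ places it in $V_4$ or $V_5$ (eliminating $M_{61},M_{62}$ and the $\Sigma$ blocks $(1,6),(2,6),(4,6),(5,6)$), while an edge from $V_2$ to $V_4$, from $V_1$ to $V_5$, or a blunt $V_4$--$V_5$ edge opens a $W$-collider through which $V_1$ and $V_2$ become $m$-connected given $A\cup C$ (eliminating $M_{42},M_{51}$ and the $\Sigma$ blocks $(2,4),(1,5),(4,5)$). In each case the manufactured collider lies in $W\subseteq\an(W)$, so conditioning on $A\cup C$ keeps it open.

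The single block that resists all three mechanisms, and which I expect to be the crux, is $M_{43}=0$: no directed edge $V_3\rightarrow V_4$. Inheritance here merely re-certifies that the target lies in $V_4$, and there is no $V_1$--$V_2$ bridge, so the contradiction must instead come from $\msep{V_1}{A}{C}$. Concatenating the $V_4$-witness from some $v_1\in V_1$ to $v_4$, the edge $v_3\rightarrow v_4$, and $v_3$'s $A$-witness to some $a\in A$ gives a walk from $v_1$ to $a$ on which $v_4$ is a collider and $v_3$ a noncollider outside $C$. The difficulty is that this walk is built to be open given the larger set $A\cup C$ -- its colliders are only known to lie in $\an(A\cup C)$, and $v_4$ may sit in $A$ rather than in $\an(C)$ -- whereas $\msep{V_1}{A}{C}$ conditions on the smaller set $C$. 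Closing this gap is the real work: I would split on whether $v_4\in C$ (so that $v_3\rightarrow v_4$ forces $v_3\in\an(C)$) or $v_4\in A$, and in both cases re-route the $\an(A\cup C)$-colliders into $\an(C)$-colliders using Proposition~\ref{prop:mConnChangeAn} together with the fact that $V_1$ admits no open connection to $A$ given $C$. Making this conversion go through cleanly, rather than the routine bookkeeping of the other blocks, is where I expect essentially all of the effort to lie.
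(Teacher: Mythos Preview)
Your proposal is correct and follows essentially the same block-by-block strategy as the paper. The one organizational difference worth noting concerns the blocks $M_{43}, M_{53}, M_{63}$: the paper treats them uniformly by first arguing that no $m$-connecting walk from $u\in V_3$ to $A$ (given $C$) or to $V_1$ (given $W$) can have a tail at $u$, so that $u$ is a collider on their composition; an edge $u\rightarrow w\in W$ then either furnishes a tailed walk to $A$ (when $w\in A$) or forces $u\in\an(C)$, making the composition $m$-connecting from $V_1$ to $A$ given $C$. This disposes of all three blocks at once, whereas you dispatch $M_{53}$ and $M_{63}$ cheaply via your inheritance mechanism and isolate $M_{43}$ as the hard case. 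Both routes rest on the same key fact---any walk from a node of $V_1$ that is open given $W=A\cup C$ is already open given $C$, since a first collider in $\an(A)\setminus\an(C)$ could be rerouted along a directed path into $A$, contradicting $\msep{V_1}{A}{C}$---and you correctly identify this as where the real work lies. One small correction: Proposition~\ref{prop:mConnChangeAn} as stated applies to collider paths, not to general walks, so it is not directly the tool you want; the rerouting argument you sketch goes through straight from the defining property of $V_1$ without needing that proposition.
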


\noindent For both matrices, the subscript $ij$ corresponds to rows $V_i$ and 
columns $V_j$.

\begin{proof}

	We have 
	that $U = V_1 
	\disjU V_2 \disjU V_3$. If $w \in V_4 \cap V_5 \neq \emptyset$, then there 
	is an $m$-connecting walk between $V_1$ and $V_2$ given $A\cup C$ which 
	would be a 
	contradiction, and thus, $W = V_4\disjU V_5\disjU V_6$. Note that 
	$\Sigma$ is symmetric so we only need to argue that the lower 
	triangular part has the postulated sparsity pattern. Whenever we mention an 
	$m$-connecting 
	walk in this proof without specifying a conditioning set we tacitly mean 
	`given $W$'.
	
	Any edge $V_1 \sim V_2$ would create an $m$-connecting walk and therefore 
	$M_{21} = 0, M_{12} = 0, \Sigma_{21} = 0$. An edge $V_1 
	\rightarrow w \in V_5$ would also create an $m$-connecting walk between 
	$V_1$ and $V_2$ as $V_5 \subseteq W$, and therefore $M_{51} = 0$. 
	Similarly, we see that $M_{42} = 0$, $\Sigma_{51} = 0$, and $\Sigma_{42} = 
	0$. 
	If $V_1 \rightarrow w \in V_6$, then $w$ would have to be in $V_4$, and 
	thus, 
	$M_{61} = 0$. Similarly, $M_{62}=0$, $\Sigma_{61} = 0$, $\Sigma_{62} = 0$. 
	Let $u \in V_3$. Then there exists an $m$-connecting walk between $u$ and 
	$A$ given $C$, and composing this walk with an edge $u \rightarrow V_1$ 
	would give 
	an $m$-connecting walk between $A$ and $V_1$ given $C$ as $u\notin C$. This 
	is a 
	contradiction and $M_{13} = 0$. Similarly, $M_{23} = 0$, using the 
	$m$-connecting walk between $u$ and $V_1$. Consider again $u 
	\in V_3$. There exists $m$-connecting walks between $u$ and $V_1$ (given 
	$A\cup C$) and $u$ 
	and $A$ (given $C$). None of them can have a tail at $u$ as otherwise we 
	could find an $m$-connecting walk between $A$ and $V_1$ given $C$. 
	Therefore, $u$ is a collider on their 
	composition, and from this it follows that $M_{43} = 0, M_{53} = 0, M_{63} 
	= 0$. If $V_4 \unEdge V_5$, it would follow that there is an $m$-connecting 
	walk 
	between $A$ and $V_1$, a contradiction. It follows that $\Sigma_{54} = 
	0$. If $V_4 \unEdge w\in W$, then $w \in V_4$, and it follows that 
	$\Sigma_{64} 
	= 0$. 
	Similarly, $\Sigma_{65} = 0$.
\end{proof}

The matrices $D,E$, and $F$ all have their rows and columns indexed by $U = 
V_1\disjU V_2 \disjU V_3$. The 
above proposition and the definitions of the matrices $D,E$, and $F$ give 
the following.

\begin{cor}
	Under the conditions of Theorem \ref{thm:globalMarkov}, the matrix $D$ has 
	the 
	sparsity structure
	
	$$
	\begin{bmatrix}
	* & 0 & * \\
	0 & * & * \\
	0 & 0 & *
	\end{bmatrix},
	$$
	
	\noindent i.e., $D_{V_2V_1} = 0, D_{V_3V_1} = 0, D_{V_1V_2} = 0$, and 
	$D_{V_3V_2} = 0$. The matrix $F$ is such that $F_{V_1V_2} = 0$ and 
	$F_{V_2V_1}=0$. 
	The matrix 
	$E$ is block diagonal and $E_{V_3V_3} = 0$.
	\label{cor:sparsDEF}
\end{cor}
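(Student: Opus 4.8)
The plan is to read the three matrices directly off the block sparsity of $M$ and $\Sigma$ supplied by Proposition~\ref{prop:sparsAlgRic}, using that $s\circ s = \Sigma_{UU}$, $s\circ S = \Sigma_{UW}$, and $S\circ S = \Sigma_{WW}$ are exactly the blocks of $\Sigma$ induced by the partitions $U = V_1 \disjU V_2 \disjU V_3$ and $W = V_4 \disjU V_5 \disjU V_6$. The observation that powers the whole computation is that the postulated sparsity of $\Sigma$ makes $S\circ S = \Sigma_{WW}$ \emph{block diagonal}, with diagonal blocks $\Sigma_{44}, \Sigma_{55}, \Sigma_{66}$ and vanishing blocks $\Sigma_{45}, \Sigma_{46}, \Sigma_{56}$ (and their transposes). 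Since $\Sigma_{WW}$ is a principal submatrix of the positive definite matrix $\Sigma$, its diagonal blocks are invertible, so $[S\circ S]^{-1}$ is again block diagonal; I will write $T_{kk} = \Sigma_{kk}^{-1}$ for $k \in \{4,5,6\}$. I would also record the sub-blocks I need: $M_{WU}$ has nonzero blocks only at $(V_4, V_1)$ and $(V_5, V_2)$ (so its $V_3$-columns and its $V_6$-row vanish), $M_{UU}^T$ has vanishing $(V_2,V_1)$, $(V_3,V_1)$, $(V_1,V_2)$, $(V_3,V_2)$ blocks (transpose the triangular pattern of $M_{UU}$), and in $\Sigma_{WU} = (s\circ S)^T$ the blocks $\Sigma_{42}$, $\Sigma_{51}$, $\Sigma_{61}$, $\Sigma_{62}$ are zero.

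For $E = M_{WU}^T [S\circ S]^{-1} M_{WU}$ the conclusion is then immediate: since $M_{WU}$ routes $V_1$ only through the $V_4$-channel and $V_2$ only through the $V_5$-channel, and $[S\circ S]^{-1}$ is block diagonal, no cross term survives. One obtains $E_{V_1 V_1} = M_{41}^T T_{44} M_{41}$, $E_{V_2 V_2} = M_{52}^T T_{55} M_{52}$, every other block zero, and in particular $E_{V_3 V_3} = 0$ because $M_{WU}$ has no $V_3$-column.

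For $D = M_{UU}^T - M_{WU}^T [S\circ S]^{-1} (s\circ S)^T$ I would show each summand already has the four target blocks equal to zero. The first summand does by the transposed triangularity of $M_{UU}$. For the correction term, its $V_3$-rows vanish because $M_{WU}^T$ has a zero $V_3$-row, and block diagonality of $[S\circ S]^{-1}$ forces its $(V_1, V_2)$ block to pass through the $V_4$-channel alone, where $\Sigma_{WU}$ contributes $\Sigma_{42} = 0$, and its $(V_2, V_1)$ block through $V_5$ alone, where $\Sigma_{51} = 0$; both vanish. Subtracting leaves $D$ with the stated pattern. The argument for $F = s\circ s - (s\circ S)[S\circ S]^{-1}(s\circ S)^T$ is identical in spirit: $\Sigma_{UU}$ has zero $(V_1,V_2)$ and $(V_2,V_1)$ blocks, and in $\Sigma_{UW}[S\circ S]^{-1}\Sigma_{WU}$ the $(V_1,V_2)$ block again factors through $V_4$ (where $\Sigma_{42}=0$) and the $(V_2,V_1)$ block through $V_5$ (where $\Sigma_{51}=0$), so $F_{V_1 V_2} = F_{V_2 V_1} = 0$.

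The computation is mechanical once the partition is fixed, so there is no deep obstacle; the only care required is the index bookkeeping and, above all, not losing the role of the block diagonality of $S\circ S$. It is exactly this block diagonality that prevents a $V_1$--$V_4$ coupling from being chained through $W$ to a $V_2$--$V_5$ coupling --- the chaining that would otherwise populate the off-diagonal $(V_1,V_2)$ and $(V_2,V_1)$ blocks of $D$ and $F$ and the off-diagonal blocks of $E$.
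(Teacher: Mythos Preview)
Your proof is correct and is exactly the computation the paper has in mind: the corollary is stated as an immediate consequence of Proposition~\ref{prop:sparsAlgRic} and the definitions of $D$, $E$, $F$, and you have carried out that block-matrix bookkeeping accurately. Your emphasis on the block diagonality of $S\circ S=\Sigma_{WW}$ as the mechanism that prevents $V_1$--$V_2$ cross terms is precisely the point.
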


\begin{lem}
	If $N$ is an invertible matrix with the sparsity of $D$, then so is 
	$N^{-1}$.
	\label{lem:sparsDinv}
\end{lem}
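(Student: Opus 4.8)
The plan is to exploit the block structure of $N$ directly. I would partition the index set of $N$ according to $U = V_1 \disjU V_2 \disjU V_3$ and write $N$, together with its yet-to-be-determined inverse, in $3 \times 3$ block form, abbreviating $N_{ij}$ for the block $N_{V_iV_j}$. The sparsity of $D$ recorded in Corollary \ref{cor:sparsDEF} says precisely that $N$ is block upper triangular, $N_{21}=N_{31}=N_{32}=0$, with the single additional vanishing block $N_{12}=0$. The goal is then to show that $N^{-1}$ exhibits exactly the same pattern, i.e. that $N^{-1}$ is block upper triangular and $(N^{-1})_{12}=0$.

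The key steps, in order, would be as follows. First I would note that the diagonal blocks $N_{11},N_{22},N_{33}$ are invertible: since the determinant of a block upper triangular matrix is the product of the determinants of its diagonal blocks, invertibility of $N$ forces each $N_{ii}$ to be invertible. Second, because $N$ is block upper triangular, its inverse is block upper triangular as well, which already gives $(N^{-1})_{21}=(N^{-1})_{31}=(N^{-1})_{32}=0$. It then remains only to establish the extra identity $(N^{-1})_{12}=0$. For this I would read off the $(V_1,V_2)$ block of the equation $NN^{-1}=I$, namely $N_{11}(N^{-1})_{12}+N_{12}(N^{-1})_{22}+N_{13}(N^{-1})_{32}=0$; since $N_{12}=0$ and $(N^{-1})_{32}=0$, this collapses to $N_{11}(N^{-1})_{12}=0$, and invertibility of $N_{11}$ yields $(N^{-1})_{12}=0$. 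Combined with block upper-triangularity, this is exactly the sparsity of $D$.

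I do not expect a genuine obstacle; the only points requiring a little care are justifying invertibility of the diagonal blocks and multiplying the blocks out in the right order so that the $(V_1,V_2)$ equation decouples as above. If one prefers an argument that does not depend on the specific $3\times 3$ shape, the conclusion also follows abstractly: the matrices supported on the positions permitted by $D$ form a subalgebra, because those positions constitute a transitive relation, so a product can be nonzero in position $(i,j)$ only when $(i,j)$ is itself permitted. Writing $N=N_0(I+\tilde N)$ with $N_0$ the block diagonal part and $\tilde N$ supported strictly above the diagonal (only the $(V_1,V_3)$ and $(V_2,V_3)$ blocks), one checks that $\tilde N^2=0$, so $(I+\tilde N)^{-1}=I-\tilde N$ stays in the algebra and hence $N^{-1}=(I+\tilde N)^{-1}N_0^{-1}$ does too. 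I would present the direct block computation in the text, as it is shortest, and mention the algebraic viewpoint as the reason the statement is robust.
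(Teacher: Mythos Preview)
Your argument is correct and is essentially the same block-matrix computation as the paper's: the paper just packages it as a Schur-complement formula after grouping $V_1\cup V_2$ against $V_3$ (so the upper-left block is itself block diagonal and hence has block-diagonal inverse), while you stay in the $3\times 3$ partition and read off the $(V_1,V_2)$ equation from $NN^{-1}=I$ directly. The subalgebra/nilpotent-perturbation remark you add at the end is not in the paper but is a pleasant conceptual aside.
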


\begin{proof}
	The matrices on the (block) diagonal of $N$ must also be invertible, and 
	the result follows 
	from the Schur complement representation of $N^{-1}$, 
	using the first two blocks as one component, and the third as the second 
	component.
\end{proof}

\begin{lem}
	Consider the Lyapunov equation for square matrices $L,Z,$ and $Q$ such that 
	$Q$ is symmetric,
	
	$$
	LZ + ZL^T + Q = 0,
	$$
	
	\noindent and let $Z_0$ denote its solution. If $L$ is stable and has the 
	sparsity 
	pattern of $D^T$ and $Q$ is such that 
	$Q_{V_1V_2} = 0$, $Q_{V_2V_1} = 0$, 
	then $(Z_0)_{V_1V_2} = 0$ and $(Z_0)_{V_2V_1} = 0$.
	\label{lem:lyapSol}
\end{lem}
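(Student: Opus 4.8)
The plan is to exploit the closed-form solution of a Lyapunov equation with a stable coefficient matrix. Since $L$ is stable, the equation $LZ + ZL^T + Q = 0$ has the unique solution
$$Z_0 = \int_0^\infty e^{Lt}\, Q\, e^{L^T t}\, \md t,$$
and $Z_0$ is symmetric because $Q$ is. It therefore suffices to show $(Z_0)_{V_1 V_2} = 0$, as $(Z_0)_{V_2 V_1} = 0$ will then follow by transposing.

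First I would record the block structure of $L$. By Corollary \ref{cor:sparsDEF}, $D$ satisfies $D_{V_2 V_1} = D_{V_3 V_1} = D_{V_1 V_2} = D_{V_3 V_2} = 0$, so $L = D^T$ has $L_{V_1 V_2} = L_{V_1 V_3} = 0$ and $L_{V_2 V_1} = L_{V_2 V_3} = 0$. In words, the $V_1$ row-block of $L$ is supported only on the $V_1$ column-block, and the $V_2$ row-block only on the $V_2$ column-block; both may feed into $V_3$, but that is irrelevant to their own rows.

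The main step is to transfer this support property to $e^{Lt}$. A short induction on $k$, using that in $(L^k)_{V_1, V_j} = \sum_i (L^{k-1})_{V_1, V_i}\, L_{V_i, V_j}$ only the $i = V_1$ term survives, shows $(L^k)_{V_1 V_1} = (L_{V_1 V_1})^k$ and $(L^k)_{V_1 V_j} = 0$ for $j \neq 1$; summing the exponential series gives $(e^{Lt})_{V_1 V_1} = e^{L_{V_1 V_1} t}$ and $(e^{Lt})_{V_1 V_j} = 0$ for $j \neq 1$. The same argument yields $(e^{Lt})_{V_2 V_2} = e^{L_{V_2 V_2} t}$ and $(e^{Lt})_{V_2 V_j} = 0$ for $j \neq 2$.

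Finally I would read off the $(V_1, V_2)$ block of the integrand. The $V_1$ row-block of $e^{Lt}$ selects only the $V_1$ row-block of $Q$ from the left, while the $V_2$ column-block of $e^{L^T t}$ (the transpose of the $V_2$ row-block of $e^{Lt}$) selects only the $V_2$ column-block from the right, so
$$(e^{Lt}\, Q\, e^{L^T t})_{V_1 V_2} = e^{L_{V_1 V_1} t}\, Q_{V_1 V_2}\, e^{L_{V_2 V_2}^T t}.$$
Since $Q_{V_1 V_2} = 0$ by hypothesis, the integrand vanishes identically, whence $(Z_0)_{V_1 V_2} = 0$, and by symmetry $(Z_0)_{V_2 V_1} = 0$. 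I expect the block-exponential computation of the third paragraph to be the only substantive point; everything else is bookkeeping. The one thing to be careful about is that $V_1$ and $V_2$ are decoupled from each other within their own row-blocks of $L$, which is precisely what forces the cross block of the integrand to depend on $Q$ solely through the vanishing block $Q_{V_1 V_2}$.
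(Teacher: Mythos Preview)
Your proposal is correct and follows the same approach as the paper: both invoke the explicit integral representation $Z_0 = \int_0^\infty e^{Ls} Q e^{L^T s}\,\md s$ for the solution of a Lyapunov equation with stable $L$. The paper's proof is a one-liner that simply cites this formula and says the result follows, whereas you have spelled out the block computation showing that the sparsity of $D^T$ propagates to $e^{Lt}$ and hence forces the $(V_1,V_2)$ block of the integrand to vanish; this is exactly the detail the paper leaves implicit. One small slip: you write ``$L = D^T$'' when the hypothesis is only that $L$ has the sparsity pattern of $D^T$, but your argument uses only the sparsity, so nothing is affected.
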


\begin{proof}
	The result follows from the explicit solution of a Lyapunov equation when 
	$L$ is stable \citep{lancaster1995},
	
	$$
	Z_0 = \int_0^\infty e^{Ls } Q e^{L^T s} \md s.
	$$
\end{proof}

\begin{defn}[Stabilizable pair of matrices]
	Let $G$ and $H$ be matrices, $n\times n$ and $n\times m$, respectively. We 
	say that the pair $(G,H)$ is \emph{stabilizable} if there exists an 
	$m\times 
	n$ matrix, $K$, such that $G + HK$ is stable.
\end{defn}

In the literature, stabilizability is used in both the context of 
continuous-time and discrete-time systems. The above definition is that of a 
continuous-time system \citep[p. 90]{lancaster1995}. The following 
is 
proven in \cite{jacob2012}.

\begin{lem}
	The pair $(A,B)$ is stabilizable if and only if for every eigenvector of 
	the matrix $A^T$ with eigenvalue $\lambda$ such that $Re(\lambda) \geq 0$ 
	it holds that $v^T B \neq 0$.
	\label{lem:stabABcrit} 
\end{lem}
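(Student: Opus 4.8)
This lemma is the continuous-time Popov--Belevitch--Hautus (PBH) test for stabilizability, and the plan is to reproduce the standard argument. First note that $A^Tv = \lambda v$ with $v \neq 0$ is equivalent to $v^TA = \lambda v^T$, so the hypothesis is really a statement about the \emph{left} eigenvectors of $A$, and $v^TB \neq 0$ means that the row vector $v^TB$ is not identically zero. Throughout, \emph{stable} means that every eigenvalue lies in the open left half-plane, consistent with the usage elsewhere in the paper.

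For the necessity direction, suppose $(A,B)$ is stabilizable, so that $A + BK$ is stable for some $K$, and suppose toward a contradiction that $v$ is a left eigenvector of $A$ with eigenvalue $\lambda$, $\mathrm{Re}(\lambda) \geq 0$, and $v^TB = 0$. Then
\[
v^T(A+BK) = v^TA + (v^TB)K = \lambda v^T,
\]
so $v$ is a left eigenvector of $A+BK$ with the same eigenvalue $\lambda$, contradicting stability of $A+BK$. Hence the eigenvector condition is necessary.

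For the sufficiency direction I would pass to the Kalman controllability decomposition: choose an invertible $T$ so that $T^{-1}AT = \left[\begin{smallmatrix} A_c & A_{12} \\ 0 & A_{\bar c}\end{smallmatrix}\right]$ and $T^{-1}B = \left[\begin{smallmatrix} B_c \\ 0\end{smallmatrix}\right]$ with $(A_c,B_c)$ controllable. Every left eigenvector $w$ of $A_{\bar c}$ lifts to a left eigenvector $v^T = (0,\,w^T)\,T^{-1}$ of $A$ with the same eigenvalue and with $v^TB = 0$; by the contrapositive of the hypothesis this eigenvalue must then have strictly negative real part, so $A_{\bar c}$ is stable. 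By the pole-placement theorem for controllable pairs one can choose a feedback $K_c$ making $A_c + B_cK_c$ stable; taking $K = (K_c,\,0)\,T^{-1}$ renders $T^{-1}(A+BK)T$ block upper triangular with diagonal blocks $A_c+B_cK_c$ and $A_{\bar c}$, both stable, whence $A+BK$ is stable and $(A,B)$ is stabilizable.

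The main obstacle is the sufficiency direction, and specifically the two structural inputs it rests on: the existence of the controllability decomposition (so that the uncontrollable modes are exactly the eigenvalues of $A_{\bar c}$, which one then matches with the left eigenvectors that annihilate $B$) and the eigenvalue-assignment theorem, which guarantees that the controllable subsystem can be made stable by feedback. The necessity direction and the eigenvector bookkeeping are routine; the real content is the reduction to a controllable subsystem together with pole placement, and since these are classical facts the cleanest route is to cite them (as the paper does via \cite{jacob2012}) rather than reprove them.
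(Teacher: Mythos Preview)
Your argument is the standard PBH proof and is correct. The paper does not actually prove this lemma at all---it simply states that the result is proven in \cite{jacob2012} and moves on---so your proposal supplies the details the paper omits rather than taking a different route. Since you yourself note at the end that the cleanest option is to cite the classical result, your write-up and the paper are in agreement on how to handle this.
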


\begin{lem}
	The pair $(D,E)$ is stabilizable.
	\label{lem:DEstab}
\end{lem}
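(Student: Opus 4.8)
The plan is to verify stabilizability through the Hautus (eigenvector) criterion of Lemma~\ref{lem:stabABcrit} applied to the pair $(D,E)$. By that criterion it suffices to show that there is no vector $v \neq 0$ and scalar $\lambda$ with $D^T v = \lambda v$, $\mathrm{Re}(\lambda) \geq 0$, and $v^T E = 0$, since the existence of such a $v$ is the only way stabilizability can fail. I would therefore assume, toward a contradiction, that such a pair $(v,\lambda)$ exists and derive a contradiction with the stability of the drift matrix $M$.

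First I would exploit the definiteness built into $E$. Recall $E = M_{WU}^T [S\circ S]^{-1} M_{WU}$, and that $S\circ S$ is a principal submatrix of the positive definite matrix $\Sigma$, so $[S\circ S]^{-1}$ is positive definite. The eigenvector $v$ is in general complex, so rather than working with $v^T E v$ I would pair the vanishing row vector $v^T E$ with $\overline{v}$: writing $z = M_{WU} v$ one obtains
\[ 0 = v^T E \overline{v} = z^T [S\circ S]^{-1} \overline{z} = z^{*} [S\circ S]^{-1} z, \]
where the last equality uses that the scalar equals its own transpose and that $[S\circ S]^{-1}$ is real and symmetric. Since $[S\circ S]^{-1}$ is positive definite, this Hermitian form vanishes only if $z = 0$, i.e. $M_{WU} v = 0$.

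With $M_{WU} v = 0$ in hand I would substitute into the eigen-relation. Using $D^T = M_{UU} - (s\circ S)[S\circ S]^{-1} M_{WU}$, the correlation term annihilates $v$ and what remains is $M_{UU} v = D^T v = \lambda v$. I would then form the padded vector $\tilde v = (v,0)^T$, with $v$ on the $U$-coordinates and $0$ on the $W$-coordinates. Because $M_{WU} v = 0$ and $M_{UU} v = \lambda v$, block multiplication gives $M \tilde v = \lambda \tilde v$, so $\lambda$ is an eigenvalue of $M$. Since $M$ is stable, every eigenvalue of $M$ has strictly negative real part, contradicting $\mathrm{Re}(\lambda) \geq 0$. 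Hence no offending eigenvector exists, and $(D,E)$ is stabilizable.

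The Hautus reduction and the block computation are mechanical; the only genuine content is the implication $v^T E = 0 \Rightarrow M_{WU} v = 0$ together with the reduction to an eigenvalue of $M$. I expect the main obstacle to be precisely the last invocation, namely justifying the appeal to stability of $M$ to exclude $\mathrm{Re}(\lambda) \geq 0$: without it the embedded eigenvector $(v,0)^T$ cannot be ruled out, and indeed a diagonal $M$ with a positive entry in the $U$-block already yields a non-stabilizable $(D,E)$, so stability of $M$ is exactly what the argument must (and does) use.
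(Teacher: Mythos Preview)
Your argument is correct and follows essentially the same route as the paper: apply the eigenvector criterion of Lemma~\ref{lem:stabABcrit}, use positive definiteness of $(S\circ S)^{-1}$ to deduce $M_{WU}v=0$ from $v^TE=0$, simplify $D^Tv$ to $M_{UU}v$, and embed $v$ as $(v,0)^T$ to contradict stability of $M$. Your handling of the complex case via the Hermitian form $z^*(S\circ S)^{-1}z$ is in fact slightly more careful than the paper's, which writes the bilinear form $v^T M_{WU}^T(S\circ S)^{-1}M_{WU}v$ without explicitly addressing complex eigenvectors.
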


\begin{proof}
	We will prove this using Lemma \ref{lem:stabABcrit}. To obtain a 
	contradiction, assume that there exists an eigenvector $v$ of $D^T$ with 
	corresponding eigenvalue $\lambda$ such that $Re(\lambda) \geq 0$, and 
	assume furthermore that $v^T E = 0$. The matrix $(S \circ S)^{-1}$ is 
	positive definite (since $\Sigma$ is positive definite), and $v^T 
	M_{WU}^T (S \circ S)^{-1} M_{WU} v = 0$. It 
	follows 
	that $M_{WU} v = 0$. Let $o$ be the column vector of zeros of length 
	$l$. Note that 
	$\lambda v = 
	D^T v =M_{UU} v$. Then,
	
	$$
	M
	\begin{pmatrix}
	v \\ o
	\end{pmatrix}
	=
	\begin{pmatrix}
	M_{UU} & M_{UW} \\ M_{WU} & M_{WW}
	\end{pmatrix}
	\begin{pmatrix}
	v \\ o
	\end{pmatrix}
	= \lambda 
	\begin{pmatrix}
	v \\ o
	\end{pmatrix}
	$$
	
	\noindent It follows that $\lambda$ is an eigenvalue of $M$ which is a 
	contradiction 
	as $M$ is stable by assumption.
\end{proof}

\begin{cor}
	There exists a symmetric $k\times k$ matrix $X_0$ such that $(X_0)_{V_1V_2} 
	= 0$, $(X_0)_{V_2V_1}=0$ and such that $D - EX_0$ is 
	stable.
	\label{cor:X0}
\end{cor}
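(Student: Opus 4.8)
The plan is to exploit the block structure recorded in Corollary~\ref{cor:sparsDEF}. Written in the partition $(V_1,V_2,V_3)$ of $U$, the matrix $D$ is block upper triangular and $E=\mathrm{diag}(E_{11},E_{22},0)$ is block diagonal. I would therefore look for a solution of the special form $X_0=\mathrm{diag}(X_{11},X_{22},0)$ with $X_{11},X_{22}$ symmetric; any such $X_0$ is automatically symmetric and has $(X_0)_{V_1V_2}=0$ and $(X_0)_{V_2V_1}=0$. For this choice $EX_0=\mathrm{diag}(E_{11}X_{11},E_{22}X_{22},0)$, so $D-EX_0$ is again block upper triangular with diagonal blocks $D_{11}-E_{11}X_{11}$, $D_{22}-E_{22}X_{22}$, and $D_{33}$. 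Since the spectrum of a block triangular matrix is the union of the spectra of its diagonal blocks, it suffices to (i) show that $D_{33}$ is already stable and (ii) find symmetric $X_{11}$ and $X_{22}$ stabilizing the other two blocks.

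The decisive step is to push the stabilizability of $(D,E)$ from Lemma~\ref{lem:DEstab} down to the diagonal blocks via the criterion of Lemma~\ref{lem:stabABcrit}. For (i), if $D_{33}$ had an eigenvalue $\lambda$ with $\mathrm{Re}(\lambda)\ge 0$, I pick $w_3\neq 0$ with $D_{33}^{T}w_3=\lambda w_3$; then $v=(0,0,w_3)$ is an eigenvector of $D^{T}$ for $\lambda$ with $v^{T}E=0$, because the third block column of $E$ vanishes, contradicting stabilizability. Hence $D_{33}$ is stable. For (ii), suppose $(D_{11},E_{11})$ were not stabilizable; by Lemma~\ref{lem:stabABcrit} applied to this pair there is $w_1\neq 0$ with $D_{11}^{T}w_1=\lambda w_1$, $\mathrm{Re}(\lambda)\ge 0$, and $w_1^{T}E_{11}=0$. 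Since $D_{33}$ is stable, $\lambda I-D_{33}^{T}$ is invertible, so I set $v_3=(\lambda I-D_{33}^{T})^{-1}D_{13}^{T}w_1$ and verify directly that $v=(w_1,0,v_3)$ is an eigenvector of $D^{T}$ for $\lambda$; again $v^{T}E=(w_1^{T}E_{11},0,0)=0$, contradicting Lemma~\ref{lem:DEstab}. The symmetric argument (interchanging the first two blocks and using $D_{23}$ in place of $D_{13}$) shows that $(D_{22},E_{22})$ is stabilizable as well. This lifting of eigenvectors through the coupling blocks $D_{13},D_{23}$ is the main obstacle, and it is exactly where the stability of $D_{33}$ is used.

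Finally I would produce $X_{11},X_{22}$ from algebraic Riccati theory. Each $E_{ii}$ is a principal submatrix of the positive semidefinite matrix $E=M_{WU}^{T}[S\circ S]^{-1}M_{WU}$ and is therefore symmetric positive semidefinite; writing $E_{ii}=B_iB_i^{T}$, stabilizability of $(D_{ii},E_{ii})$ coincides with stabilizability of $(D_{ii},B_i)$. Choosing a positive definite state weight (so the detectability hypothesis holds trivially), the standard LQR algebraic Riccati equation associated with $(D_{ii},B_i)$ has a unique symmetric positive semidefinite stabilizing solution $X_{ii}$, for which $D_{ii}-E_{ii}X_{ii}=D_{ii}-B_iB_i^{T}X_{ii}$ is stable. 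Setting $X_0=\mathrm{diag}(X_{11},X_{22},0)$ then gives a symmetric matrix with vanishing $V_1V_2$ and $V_2V_1$ blocks such that $D-EX_0$ is block upper triangular with the three stable diagonal blocks $D_{11}-E_{11}X_{11}$, $D_{22}-E_{22}X_{22}$, and $D_{33}$, hence stable, as claimed.
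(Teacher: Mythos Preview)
Your argument is correct and follows essentially the same route as the paper: reduce to block upper triangular form, show $D_{33}$ is stable and each pair $(D_{ii},E_{ii})$ is stabilizable via the eigenvector criterion of Lemma~\ref{lem:stabABcrit}, then invoke Riccati theory to obtain symmetric (indeed positive semidefinite) stabilizing blocks $X_{ii}$. The paper differs only in the bookkeeping: it first observes that stabilizability of $(D,E)$ forces the restricted pair $(D_{\{V_1,V_2\}\{V_1,V_2\}},E_{\{V_1,V_2\}\{V_1,V_2\}})$ to be stabilizable and then uses block diagonality of both restricted matrices, whereas you lift the offending eigenvector of $D_{ii}^{T}$ directly to an eigenvector of the full $D^{T}$ by solving $(\lambda I - D_{33}^{T})v_3 = D_{i3}^{T}w_i$; your route is slightly more explicit but otherwise equivalent.
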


We let $k$ denote the cardinality of $U$.

\begin{proof}
	From the above lemma it follows that there exists a $k\times k$ matrix 
	$\bar{X}$ such that $D + E\bar{X}$ is stable. From the sparsity of $D$ and 
	$E$ it follows that for any $k\times k$ matrix, $X$, $D + EX$ is stable if 
	and only if $D_{\{V1,V2\}\{V1,V2\}} + 
	E_{\{V1,V2\}\{V1,V2\}}X_{\{V1,V2\}\{V1,V2\}}$ and $D_{V_3V_3}$ are stable. 
	The matrices $D_{\{V1,V2\}\{V1,V2\}}$ and 
	$E_{\{V1,V2\}\{V1,V2\}}$ are both block diagonal and thus both pairs of 
	blocks are 
	stabilizable using the existence of $\bar{X}$ and Lemma 
	\ref{lem:stabABcrit}. It follows that 
	$X_0$ 
	can be chosen as block diagonal. We need to argue that $X_0$ can be chosen 
	to be symmetric. The blocks in the diagonal of $E$ are positive 
	semidefinite and
	stabilizable (when paired with their corresponding $D$ blocks). Therefore 
	$X_0$ can be chosen to also be positive semidefinite (and therefore 
	symmetric) and such that $D - EX_0$ is stable 
	\cite[Lemma 4.5.4]{lancaster1995}, see also \cite{guo1998}. 
\end{proof}

Matrices $E$ and $F$ are both positive semidefinite and there exist unique 
positive semidefinite matrices $\bar{E}$ and $\bar{F}$ such that $E = 
\bar{E}\bar{E}$ and such that $F = \bar{F} \bar{F}$ \citep[Theorem 
7.2.6]{horn1985}.

\begin{cor}
	The pair $(D,\bar{E})$ is stabilizable.
	\label{cor:DbarEstab}
\end{cor}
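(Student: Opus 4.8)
The plan is to deduce stabilizability of $(D,\bar{E})$ from that of $(D,E)$, which is already available as Lemma \ref{lem:DEstab}, by applying the spectral criterion of Lemma \ref{lem:stabABcrit} to both pairs. The crucial observation is that the two pairs share the same first matrix $D$, so in both cases the criterion is tested against exactly the same eigenvectors of $D^T$; only the condition $v^T\bar{E}\neq 0$ versus $v^T E\neq 0$ differs. Since $\bar{E}$ is the positive semidefinite square root with $E=\bar{E}\bar{E}$, I expect these two conditions to coincide, so that the two stabilizability statements become equivalent.

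Concretely, I would fix an eigenvector $v$ of $D^T$ whose eigenvalue $\lambda$ satisfies $Re(\lambda)\geq 0$ (if no such eigenvector exists, $D$ is stable and both pairs are trivially stabilizable). By Lemma \ref{lem:DEstab} together with Lemma \ref{lem:stabABcrit} applied to $(D,E)$, one knows that $v^T E\neq 0$. I would then argue by contradiction: if $v^T\bar{E}=0$, then right-multiplying by $\bar{E}$ gives $v^T E = v^T\bar{E}\bar{E}=0$, contradicting the previous sentence. Hence $v^T\bar{E}\neq 0$ for every such $v$, and Lemma \ref{lem:stabABcrit} applied to $(D,\bar{E})$ yields that $(D,\bar{E})$ is stabilizable, which is exactly the assertion of Corollary \ref{cor:DbarEstab}.

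There is essentially no hard step here, and I would not expect a genuine obstacle. The only point worth noting is that the implication $v^T\bar{E}=0\Rightarrow v^T E = v^T\bar{E}\bar{E}=0$ is purely algebraic, using nothing beyond associativity of matrix multiplication, so it remains valid for complex eigenvectors. More symmetrically, since $\bar{E}$ is symmetric and positive semidefinite, $E$ and $\bar{E}$ have the same kernel, hence the same left kernel, so the conditions $v^T E\neq 0$ and $v^T\bar{E}\neq 0$ coincide and the two stabilizability statements are equivalent. Finally, the dimensions match throughout, as $\bar{E}$ is the $k\times k$ square root of the $k\times k$ matrix $E$, so Lemma \ref{lem:stabABcrit} applies to $(D,\bar{E})$ just as it does to $(D,E)$.
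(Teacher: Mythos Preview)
Your proposal is correct and is precisely the natural way to unpack the paper's one-line proof, which simply states that the result follows from $(D,E)$ being stabilizable (Lemma \ref{lem:DEstab}); the spectral criterion of Lemma \ref{lem:stabABcrit} combined with the implication $v^T\bar{E}=0\Rightarrow v^T E=v^T\bar{E}\bar{E}=0$ is exactly the intended argument.
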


\begin{proof}
	This follows from the fact that $(D,E)$ is stabilizable (Lemma 
	\ref{lem:DEstab}).
\end{proof}

\begin{defn}[Detectable pair of matrices]
	Let $G$ and $H$ be matrices, $m\times n$ and $n\times n$ respectively. We 
	say that the pair $(G,H)$ is \emph{detectable} if there exists an $n\times 
	m$ matrix, $X$, such that $XG + H$ is stable.
\end{defn}

\begin{prop}
	The pair $(\bar{F},D)$ is detectable. The pair $(F,D)$ is also detectable.
	\label{prop:FDdetect}
\end{prop}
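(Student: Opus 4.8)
The plan is to exploit the fact that, under the hypotheses of Theorem \ref{thm:globalMarkov}, the matrix $F$ is invertible, which renders detectability immediate. Recall that $F$ is the Schur complement of $S\circ S$ in $\Sigma$; since $\Sigma$ is positive definite, $F$ is positive definite as well (as already noted in the text). Consequently $F$ is invertible, and its positive semidefinite square root $\bar F$, characterized by $\bar F\bar F = F$, shares the eigenvectors of $F$ with eigenvalues equal to the positive square roots of those of $F$. Hence $\bar F$ is positive definite too, and in particular invertible.

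Once invertibility of $\bar F$ is in hand, detectability of $(\bar F, D)$ reduces to a one-line construction. By definition I must exhibit a matrix $X$ with $X\bar F + D$ stable. Taking $X = -(I+D)\bar F^{-1}$ gives $X\bar F + D = -(I+D) + D = -I$, whose spectrum is $\{-1\}$ and which is therefore stable; this shows that $(\bar F, D)$ is detectable. The argument for $(F, D)$ is identical: with $X = -(I+D)F^{-1}$ one obtains $XF + D = -I$, again stable, so $(F, D)$ is detectable. (More generally, any pair whose ``output'' matrix is square and invertible is detectable, since one may then solve $XG + H = S$ for any prescribed stable $S$.)

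The only real content here is the recognition that positive definiteness of $\Sigma$ forces $F$, and hence $\bar F$, to be invertible; after that there is no obstacle, because an invertible output matrix leaves no unobservable modes. I should note for contrast that one could instead route through the standard duality between detectability and stabilizability together with the eigenvector criterion of Lemma \ref{lem:stabABcrit}: detectability of $(\bar F, D)$ is equivalent to stabilizability of $(D^T, \bar F)$, and invertibility of $\bar F$ makes the criterion $v^T\bar F \neq 0$ hold for every eigenvector $v$ of $D$. This alternative is more roundabout than the explicit construction above, so I would present the direct argument and mention the duality route only as a remark.
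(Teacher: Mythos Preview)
Your proof is correct and matches the paper's own argument essentially verbatim: the paper also observes that $\bar F$ (and $F$) is invertible and takes $X = (-I - D)\bar F^{-1}$ to obtain the stable matrix $-I$. Your added explanation of why $\bar F$ inherits invertibility from $F$ is a helpful elaboration, but the core idea is identical.
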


\begin{proof}
	Observe that $\bar{F}$ is invertible. This means that we can choose $X = 
	(-I - D)\bar{F}^{-1}$. With this choice of $X$, the matrix $X\bar{F} + D$ 
	is stable. A similar argument works for the pair $(F, D)$.
\end{proof}

We argue now that there is a unique nonnegative definite solution of the 
algebraic 
Riccati equation (\ref{eq:algRic}) by showing that the conditions of Theorem 2 
in \cite{kucera1972} are fulfilled. The pair $(D,\bar{E})$ is 
stabilizable (Corollary \ref{cor:DbarEstab}) and the pair $(\bar{F}, D)$ is 
detectable (Proposition \ref{prop:FDdetect}) and we just need to show that 

	$$
	\tilde{M}
	=
	\begin{pmatrix}
	D & \ & -E \\ -F & \ & -D^T
	\end{pmatrix}
	$$

\noindent is such that $Re(\lambda) \neq 0$ for all eigenvalues, $\lambda$, of 
$\tilde{M}$. Assume to obtain a contradiction that $\lambda$ is a eigenvalue 
of $\tilde{M}$ such that 
$Re(\lambda) = 0$,

$$
\lambda v = \tilde{M} v, \ \ \ \ 
	v
	=
	\begin{pmatrix}
	v_1 \\ v_2
	\end{pmatrix}.
$$

\noindent Similarly to what is done in \cite{molinari1973}, we left-multiply by 
$(
v_2^* \ \  v_1^*)$  where $*$ denotes conjugate transpose to obtain

	$$
	\begin{pmatrix}
	v_2^* & \ & v_1^* 
	\end{pmatrix}
	\begin{pmatrix}
	D & \ & -E \\ -F & \ & -D^T
	\end{pmatrix}
		\begin{pmatrix}
		v_1 \\ v_2
		\end{pmatrix}
	=
	\lambda (v_2^*v_1 + v_1^*v_2).
	$$
	
\noindent By taking real parts on both sides of the above equation, we obtain
$Re(-v_2^* E v_2 - v_1^*Fv_1) = 0$. Matrices $E$ and $F$ are both positive 
semidefinite so $v_2^* E v_2 = 0$ and $v_1^*Fv_1 = 0$. The matrix $F$ is 
positive definite so $v_1 = 0$. Lemma \ref{lem:stabABcrit} gives a 
contradiction to the fact that $(D,E)$ is stabilizable. In conclusion, it 
follows from Theorem 2 in \cite{kucera1972} that there exists a unique 
positive 
semidefinite 
solution of the algebraic Riccati 
equation.

\begin{lem}[Sparsity in solution of algebraic Riccati equation]
	Under the conditions of Theorem \ref{thm:globalMarkov}, it holds that 
	$\bar{\Gamma}_{V_1 V_2} = 0$ when $\bar{\Gamma}$ is the 
	unique, nonnegative definite solution of Equation (\ref{eq:algRic}). 
	\label{lem:sparsAlgRic}
\end{lem}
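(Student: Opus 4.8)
The plan is to combine the stabilizing character of $\bar{\Gamma}$ with a Newton--Kleinman iteration whose individual steps are Lyapunov equations to which Lemma \ref{lem:lyapSol} applies directly. Recall that the discussion preceding the statement shows, via the stabilizability of $(D,E)$ (Lemma \ref{lem:DEstab}) and the detectability of Proposition \ref{prop:FDdetect}, that Equation (\ref{eq:algRic}) has a unique nonnegative definite solution and that this solution is the stabilizing one, so $D - E\bar{\Gamma}$ is stable. A naive substitution of $\bar{\Gamma}$ into the associated Lyapunov form is circular (applying Lemma \ref{lem:lyapSol} would already require the sparsity of $D - E\bar{\Gamma}$, which is what we are after), so instead I would build up the sparsity along an iteration that starts from the already-sparse matrix of Corollary \ref{cor:X0}.

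Concretely, starting from the symmetric matrix $X_0$ of Corollary \ref{cor:X0}, which satisfies $(X_0)_{V_1V_2} = (X_0)_{V_2V_1} = 0$ and makes $D - E X_0$ stable, define $X_{k+1}$ as the solution of the Lyapunov equation
$$
(D - E X_k)^T X_{k+1} + X_{k+1}(D - E X_k) + \left(F + X_k E X_k\right) = 0.
$$
Since $E$ is nonnegative definite and a stabilizing solution of (\ref{eq:algRic}) exists, the classical convergence theory of the Newton--Kleinman iteration \citep{lancaster1995} guarantees that each $X_k$ remains stabilizing (that is, $D - E X_k$ is stable) and that $X_k \to \bar{\Gamma}$ as $k \to \infty$.

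The core step is an induction proving $(X_k)_{V_1V_2} = (X_k)_{V_2V_1} = 0$ for every $k$, the base case being Corollary \ref{cor:X0}. For the inductive step I would apply Lemma \ref{lem:lyapSol} with $L = (D - E X_k)^T$ and inhomogeneous term $Q = F + X_k E X_k$, checking its two hypotheses. First, $L$ must have the sparsity of $D^T$: because $E$ is block diagonal with $E_{V_3V_3} = 0$ (Corollary \ref{cor:sparsDEF}), the block $(i,j)$ of $E X_k$ equals $E_{V_iV_i}(X_k)_{V_iV_j}$, so using the induction hypothesis together with the known zero blocks of $D$ one checks that $D - E X_k$ vanishes in the $(V_2,V_1)$, $(V_3,V_1)$, $(V_1,V_2)$, and $(V_3,V_2)$ blocks, i.e. it inherits the sparsity of $D$. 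Second, $Q_{V_1V_2} = Q_{V_2V_1} = 0$: here $F_{V_1V_2} = F_{V_2V_1} = 0$ by Corollary \ref{cor:sparsDEF}, while the $(V_1,V_2)$ block of $X_k E X_k$ is $\sum_{i\in\{1,2,3\}} (X_k)_{V_1V_i} E_{V_iV_i} (X_k)_{V_iV_2}$, in which the $i=3$ term drops out since $E_{V_3V_3}=0$ and the $i=1,2$ terms each carry the factor $(X_k)_{V_1V_2}=0$; the $(V_2,V_1)$ block is handled symmetrically. Lemma \ref{lem:lyapSol} then yields $(X_{k+1})_{V_1V_2} = (X_{k+1})_{V_2V_1} = 0$, completing the induction.

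Finally, since the set of matrices whose $(V_1,V_2)$ and $(V_2,V_1)$ blocks vanish is closed and $X_k \to \bar{\Gamma}$, the limit satisfies $\bar{\Gamma}_{V_1V_2} = 0$ (and $\bar{\Gamma}_{V_2V_1} = 0$, consistent with symmetry of the solution). The step I expect to be most delicate is not the block bookkeeping in the two hypotheses above, which is routine given Corollary \ref{cor:sparsDEF}, but rather invoking the Newton--Kleinman convergence theorem under precisely the hypotheses already available ($E \succeq 0$ and the existence of a stabilizing solution), and confirming that the iterates stay stabilizing so that each Lyapunov coefficient $(D - E X_k)^T$ is genuinely stable; this is exactly what breaks the circularity of the direct substitution.
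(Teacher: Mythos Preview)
Your proposal is correct and follows essentially the same approach as the paper: both start from the sparse stabilizing $X_0$ of Corollary \ref{cor:X0}, run the Newton--Kleinman iteration, use Lemma \ref{lem:lyapSol} together with the block structure from Corollary \ref{cor:sparsDEF} to propagate the $(V_1,V_2)$-sparsity through each Lyapunov step by induction, and pass to the limit. The paper cites \cite{guo1998} rather than \cite{lancaster1995} for the fact that the iterates remain stabilizing and converge to the maximal (here unique nonnegative definite) solution, but otherwise the arguments coincide, and your block-by-block verification is in fact more explicit than the paper's.
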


\begin{proof}
	Theorem 1 of \citet{guo1998} applies as $F$ is positive semidefinite. We 
	know from above that there is a unique positive semidefinite solution and 
	this must necessarily be the same as the maximal symmetric solution of 
	Theorem 1 in \cite{guo1998}.
	
	Using Corollary \ref{cor:X0}, there exists a symmetric $k\times k$ matrix, 
	$X_0$, such that $(X_0)_{V_1V_2} = 0$,  $(X_0)_{V_2V_1} = 0$, and such that 
	$D - EX_0$ is stable. From this matrix, we will define a sequence of 
	matrices that converge to $X_+$.  With this purpose in mind, we define a 
	Newton step 
	as the operation that takes a matrix
	$X_i$ to the solution of (this is an equation in $X$)
	
	$$
	(D - EX_i)^T X + X(D-EX_i) + X_iEX_i + F = 0.
	$$
	
	Assume now that $X_i$ is such that $(X_i)_{V_1V_2} = 0$ and $(X_i)_{V_2V_1} 
	= 0$. Note first that by Corollary \ref{cor:sparsDEF}, $\bar{Q} = X_iEX_i + 
	F$ is also such that 
	$\bar{Q}_{V_1V_2} = 0$ and $\bar{Q}_{V_2V_1} = 0$. The matrix $EX_i$ has 
	the 
	sparsity pattern of $D$ (i.e., $D_{jk} = 0$ implies that $(EX_i)_{jk} = 
	0$), and the matrix $D$ does too. By induction and 
	using Lemma \ref{lem:lyapSol}, it 
	follows that $X_i$ is such that $(X_i)_{V_1V_2} = 0$ and 
	$(X_i)_{V_2V_1} = 0$ for all $i\geq 0$. Note that for all $i$ it holds that 
	$D - EX_i$ is stable and that $X_i$ is symmetric \citep{guo1998}. Theorem 
	1.2 of \citet{guo1998} now 
	gives that $X_+ = \lim X_i$ is the solution of the 
	algebraic Riccati equation, and it follows from the above that 
	$(X_+)_{V_1V_2} = 0$ and $(X_+)_{V_2V_1} = 0$.	
\end{proof}

\subsection{Sparsity in the solution of the differential Riccati equation}

We will use the above results on the algebraic Riccati equation to describe 
zero entries of the solution to the differential Riccation equation. From 
\cite{choi1990}, it follows that if $\Gamma_0$ is positive definite, then

\begin{align}
\Gamma(t) = \bar{\Gamma} + \text{e}^{tK^T}(\Gamma_0 - \bar{\Gamma})\left(I + 
\int_0^t \text{e}^{sK}E\text{e}^{sK^T} \md s (\Gamma_0 - \bar{\Gamma}) 
\right)^{-1} 
\text{e}^{tK}
\label{eq:diffRic}
\end{align}

\noindent where $K = D-E\bar{\Gamma}$ and $\bar{\Gamma}$ is the 
unique 
nonnegative definite solution of the algebraic Riccati equation (Equation 
(\ref{eq:algRic})). 

\begin{proof}[Proof of Equation \ref{eq:diffRic}]
	From \cite{choi1990}, we have that Equation (\ref{eq:diffRic}) holds under 
	whenever $\Gamma_0$ is positive definite as $(D, \bar{E})$ is stabilizable 
	(Corollary 
	\ref{cor:DbarEstab}), and $(\bar{F}, D)$ 
	is detectable (Proposition \ref{prop:FDdetect}).
\end{proof}

\begin{lem}
	Let $\Gamma(t)$ denote the solution of the 
	differential Riccati equation (Equation (\ref{eq:diffRic})) with initial 
	condition $\Gamma_0$. Under the conditions of Theorem 
	\ref{thm:globalMarkov}, it holds that $(\Gamma(t))_{V_1V_2} = 0$ for all $t 
	\geq 0$.
\end{lem}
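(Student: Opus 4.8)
The plan is to exploit the explicit representation (\ref{eq:diffRic}) together with the sparsity of $\bar\Gamma$ from Lemma \ref{lem:sparsAlgRic} and the closure properties of the sparsity class already used for $D$. Write $\mathcal{A}$ for the linear space of $U\times U$ matrices with the block sparsity pattern of $D$ in Corollary \ref{cor:sparsDEF}, i.e. with vanishing $(V_2,V_1)$, $(V_3,V_1)$, $(V_1,V_2)$ and $(V_3,V_2)$ blocks. The relation ``block $(i,j)$ may be nonzero'' is transitively closed, so $\mathcal{A}$ is a unital algebra (closed under sums and products and containing the identity), and by Lemma \ref{lem:sparsDinv} it is closed under inversion. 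First I would show that $K = D - E\bar\Gamma \in \mathcal{A}$: since $E$ is block diagonal with $E_{V_3 V_3}=0$ (Corollary \ref{cor:sparsDEF}) and $\bar\Gamma_{V_1 V_2}=\bar\Gamma_{V_2 V_1}=0$ (Lemma \ref{lem:sparsAlgRic}, using symmetry), the product $E\bar\Gamma$ has the same off-diagonal zero blocks as $D$, so $K$ stays in $\mathcal{A}$. Consequently $e^{tK}\in\mathcal{A}$ (as a limit of polynomials in $K$, which lie in the closed algebra $\mathcal{A}$) and $e^{tK^T}\in\mathcal{A}^T$.

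Next I would track the sparsity through the two factors of the correction term. Because a matrix in $\mathcal{A}$ has nonzero column-$V_1$ block only in row $V_1$ and nonzero column-$V_2$ block only in row $V_2$, and $E=\mathrm{diag}(E_{V_1V_1},E_{V_2V_2},0)$, the product $e^{sK}E$ is block diagonal with zero $(V_3,V_3)$ block; multiplying on the right by $e^{sK^T}\in\mathcal{A}^T$ preserves block diagonality, so the integrand $e^{sK}Ee^{sK^T}$, and hence $Q(t):=\int_0^t e^{sK}Ee^{sK^T}\,\mathrm{d}s$, is block diagonal with vanishing $(V_3,V_3)$ block. Then $Q(t)(\Gamma_0-\bar\Gamma)$ has the off-diagonal zero pattern of $D$, whence $P(t)=I+Q(t)(\Gamma_0-\bar\Gamma)\in\mathcal{A}$ and therefore $P(t)^{-1}\in\mathcal{A}$ by Lemma \ref{lem:sparsDinv}.

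It remains to handle $\Gamma_0-\bar\Gamma$ and assemble the blocks. Under the hypotheses of Theorem \ref{thm:globalMarkov} the entries of $X_0$ are independent, so $X_0^U$ is independent of $\mathcal{F}_0^W$, giving $m_0=\mathrm{E}[X_0^U]$ and hence $\Gamma_0=\mathrm{Cov}(X_0^U)$ diagonal and positive definite; in particular $(\Gamma_0)_{V_1 V_2}=(\Gamma_0)_{V_2 V_1}=0$, so $\Delta:=\Gamma_0-\bar\Gamma$ is symmetric with $\Delta_{V_1 V_2}=\Delta_{V_2 V_1}=0$. Writing the correction term as $M(t)=e^{tK^T}\Delta P(t)^{-1}e^{tK}$, I would read off its $(V_1,V_2)$ block via column $V_2$: column $V_2$ of $e^{tK}$ is supported on the $V_2$ row-block, and so is column $V_2$ of $P(t)^{-1}$; applying $\Delta$ to a vector supported on $V_2$ produces a vector whose $V_1$ block vanishes (because $\Delta_{V_1V_2}=0$); finally the row-$V_1$ block of $e^{tK^T}\in\mathcal{A}^T$ is supported only on the $V_1$ column, so it annihilates that vector. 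Hence $M(t)_{V_1 V_2}=0$, and since $\bar\Gamma_{V_1 V_2}=0$ we conclude $(\Gamma(t))_{V_1 V_2}=0$ for all $t\ge 0$.

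The main obstacle is that $\Delta$ itself is \emph{not} in the algebra $\mathcal{A}$ (its $(V_1,V_3)$ and $(V_3,V_1)$ blocks survive), so one cannot simply multiply sparsity patterns; the argument must use the precise row/column support of the individual factors rather than a blanket closure statement. A cleaner alternative, which avoids the explicit formula, is to note that the subspace of symmetric matrices with zero $(V_1,V_2)$ and $(V_2,V_1)$ blocks is invariant under the Riccati vector field $\Gamma\mapsto \Gamma D + D^T\Gamma - \Gamma E\Gamma + F$ --- each term's $(V_1,V_2)$ block vanishes on this subspace, using the patterns of $D$, $E$, and $F$ from Corollary \ref{cor:sparsDEF} --- so that the unique solution started at $\Gamma_0\in\mathcal{S}$ remains in $\mathcal{S}$.
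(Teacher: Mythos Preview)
Your argument is correct and follows the same route as the paper: both use the closed-form representation (\ref{eq:diffRic}) and propagate the block sparsity of $D$, $E$, $F$, $\bar\Gamma$ and $\Gamma_0$ through the factors $e^{tK}$, $I+\int_0^t e^{sK}Ee^{sK^T}\,\mathrm{d}s\,(\Gamma_0-\bar\Gamma)$ and its inverse. Your write-up is in fact more careful than the paper's on two points: you justify explicitly why $\Gamma_0$ is diagonal (from the independence assumption on $X_0$ in Theorem \ref{thm:globalMarkov}), and you correctly flag that $\Delta=\Gamma_0-\bar\Gamma$ is \emph{not} in $\mathcal{A}$, so the final $(V_1,V_2)$-block computation needs the row/column support argument you give rather than a blanket closure statement.

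The invariance argument you sketch at the end --- checking that the Riccati vector field preserves the symmetric subspace $\{\Gamma:\Gamma_{V_1V_2}=\Gamma_{V_2V_1}=0\}$ and invoking uniqueness of the ODE solution --- is a genuinely different and arguably cleaner alternative to the paper's approach. It bypasses the explicit formula (\ref{eq:diffRic}) (and hence the stabilizability/detectability verification needed to validate that formula) and reduces the lemma to a direct block computation on $\Gamma D + D^T\Gamma - \Gamma E\Gamma + F$; the trade-off is that the paper's route gives, as a by-product, the long-time limit $\Gamma(t)\to\bar\Gamma$.
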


\begin{proof}
	This follows directly from the expression in Equation (\ref{eq:diffRic}) 
	and 
	the sparsity of the matrices that go into that expression: $\text{e}^{tK}$ 
	has the sparsity of $D$ and $\text{e}^{tK^T}$ has that of $D^T$. From Lemma 
	\ref{lem:sparsAlgRic} we know that 
	$\bar{\Gamma}_{V_1V_2} = 0$. The matrix
	
	$$
	I + 
	\int_0^t \text{e}^{sK}E\text{e}^{sK^T} \md s (\Gamma_0 - \bar{\Gamma})
	$$
	
	\noindent has the sparsity of $D$ and so does its inverse (Lemma
	\ref{lem:sparsDinv}). This result follows immediately by matrix 
	multiplication.
\end{proof}

\begin{proof}[Proof of Theorem \ref{thm:globalMarkov}]
	Let $\beta\in B$ and let $t \in I$. We need to show that 
	$$E\left(\lambda_t^\beta 
	\mid 
	\mathcal{F}_t^{A\cup C}\right)$$ is almost surely equal to an 
	$\mathcal{F}_t^C$-measurable random variable. We can without loss of 
	generality assume 
	that $A$ and $C$ are disjoint. The fact that $B$ is $\mu$-separated from 
	$A$ given $C$ implies that $M_{\beta A} = 0$,
	
	\begin{align*}
	E\left(\lambda_t^\beta \mid 
	\mathcal{F}_t^{A\cup C}\right) & = - M_{\beta V}\mu + \sum_{\gamma\in A\cup 
	C} M_{\beta\gamma} 
	X_t^\gamma + \sum_{\delta\notin A\cup C} M_{\beta\delta}E\left(X_t^\delta 
	\mid 
	\mathcal{F}_t^{A\cup C}\right) \\
	& = - M_{\beta V}\mu + \sum_{\gamma\in C} M_{\beta\gamma} 
	X_t^\gamma + \sum_{\delta \in \pa_\D(\beta) \setminus (A\cup C)} 
	M_{\beta\delta}E\left(X_t^\delta 
	\mid 
	\mathcal{F}_t^{A\cup C}\right)
	\end{align*}
	
	\noindent where $\D$ is the canonical local independence graph. Let $U = 
	V\setminus A\cup C$. Consider now the partition of $V$ 
	given
	in Proposition \ref{prop:sparsAlgRic}. We see 
	that $\pa_\D(\beta) \setminus (A\cup C) \subseteq V_1$. 
	The matrix $M_{UU} + (s\circ S + \gamma_t M_{WU}^T)(S\circ S)^{-1}M_{WU}$ 
	in 
	the integral equation for the conditional expectation process has the 
	sparsity of $D^T$ and it follows that one can solve for $m_t^{V_1}$ 
	independently of $m_t^{U\setminus V_1}$ as the solution of the smaller 
	system 
	is unique and continuous \citep{liptser1977, beesack1985}. We see that 
	processes 
	$X_t^A$ do not enter into these 
	equations. This follows from the sparsity of $s\circ S$, $S\circ S$, and of 
	$\gamma_t M_{WU}^T$, and the fact that
	$M_{V_4A}=0$ and $M_{V_1 A} =0$, noting that $A \cap V_4 = \emptyset$.
\end{proof}

\end{appendix}




\section*{Acknowledgements}
This work was supported by VILLUM FONDEN (research grant 13358).


\bibliographystyle{imsart-number} 
\bibliography{linearSDEs}       

\end{document}